\documentclass[11pt,oneside]{amsart}
 \usepackage[utf8]{inputenc}
 \usepackage[T1]{fontenc}
 \usepackage[english]{babel}
 \usepackage{csquotes}
 \usepackage[unicode=true,pdfusetitle, bookmarks=true,bookmarksnumbered=false,bookmarksopen=true,bookmarksopenlevel=3, breaklinks=false,pdfborder={0 0 0},backref=false,colorlinks=true]{hyperref}
 \hypersetup{linkcolor=blue,anchorcolor=blue,citecolor=blue,urlcolor=black} 
 \usepackage{amsfonts}
 \usepackage{amssymb}
 \usepackage{amsmath}
 \usepackage{color}
 \usepackage{dsfont}
 \usepackage{amsthm}
 \usepackage{ytableau}
 \usepackage{enumitem}
 \usepackage{mathtools}
 \usepackage{a4wide}
 \usepackage{mathrsfs}
 \usepackage{appendix}
 \usepackage{bm}

 \newtheorem{thm}{Theorem}[section]
 \newtheorem{theorem}[thm]{Theorem}
 
 \newtheorem{proposition}[thm]{Proposition}
 \newtheorem{lemma}[thm]{Lemma}
 \newtheorem{corollary}[thm]{Corollary}

 \newtheorem*{theorem*}{Theorem}

 \theoremstyle{definition}
 \newtheorem{definition}[thm]{Definition}

 \newtheorem{remark}[thm]{Remark}
 \numberwithin{equation}{section} 

\newcommand{\bC}{{\mathbb{C}}}

\newcommand{\bE}{{\mathbb{E}}}

\newcommand{\bN}{{\mathbb{N}}}

\newcommand{\bR}{{\mathbb{R}}}

\newcommand{\bZ}{{\mathbb{Z}}}

\newcommand{\cA}{{\mathcal{A}}}
\newcommand{\cB}{{\mathcal{B}}}

\newcommand{\cD}{{\mathcal{D}}}

\newcommand{\cM}{{\mathcal{M}}}
\newcommand{\cN}{{\mathcal{N}}}

\newcommand{\cP}{{\mathcal{P}}}
\newcommand{\cQ}{{\mathcal{Q}}}

\newcommand{\dsone}{\mathds{1}}

\DeclareMathOperator{\supp}{supp}

\begin{document}

\title[Noncommutative convolution operators]{Noncommutative maximal weak type $(1,1)$ estimates for convolution operators under $L_1$-H\"ormander conditions}

\author{Guixiang Hong}
\address{Guixiang Hong, Institute for Advanced Study in Mathematics, Harbin Institute of Technology, 15000 Harbin, China}
\email{gxhong@hit.edu.cn}

\author{Liang Wang}
\address{Liang Wang, Department of Mathematics, City University of Hong Kong, Hong Kong SAR, China}
\email{L.Wang@cityu.edu.hk}

\author{Xumin Wang}
\address{Xumin Wang, Institute for Advanced Study in Mathematics, Harbin Institute of Technology, 15000 Harbin, China}
\email{xumin.wang@hit.edu.cn}

\author{Bang Xu}
\address{Bang Xu, School of Mathematical Sciences, Xiamen University, Xiamen 361005, China}
\email{bangxu@xmu.edu.cn}

\begin{abstract}
   We prove weak type $(1,1)$ estimates for families of scalar convolution operators acting on operator-valued
function spaces. A joint $L_1$-H\"ormander condition, together with a maximal strong type estimate at one exponent $1<r\leq\infty$, suffices. The kernels may be complex-valued and need not have uniformly bounded $L_1$ norms. Under a size condition, the result also yields a weak endpoint estimate for all sharp truncation radii of singular kernels. This resolves, for convolution kernels, the problem mentioned in \cite[Remark~5.5]{parcet09pseudo} and recalled in Parcet's recent survey \cite[Remark~1.3]{Parcet2026ICM}. The proof involves not only a  simultaneous dyadic representation of the operators but also a noncommutative maximal weak type $(1,1)$ estimates of the resulting dyadic operators. This approach seems entirely new even in the commutative setting.
    
 Numerous applications are provided. On the one hand, it yields various maximal inequalities and pointwise convergence results for integrable kernels and dyadic approximations of the identity, monotone $L\log L$ majorants, and Poisson integrals on nilpotent groups and symmetric spaces. On the other hand, via various transference techniques, it implies maximal inequalities for Fourier multipliers, Schur multipliers and double operator integrals, together with
bilateral almost uniform convergence of dyadic Fourier summation methods on $L_1$ of group von Neumann algebras.
\end{abstract}

\maketitle
\pagestyle{headings}

\tableofcontents

\section{Introduction and main results}\label{sec:introduction}

Maximal inequalities for convolution operators provide
a means of passing from convergence on a dense class
to almost everywhere convergence.
For integrable approximation kernels,
Z\'o~\cite{zo76appro} proved weak type $(1,1)$ and
strong type $(p,p)$ estimates under a joint
$L_1$-H\"ormander condition.
Z\'o's result turned out to be a standard application
of vector-valued Calder\'on--Zygmund theory, having
found numerous applications including a new proof of Hardy-Littlewood maximal inequality; see, e.g.,
\cite[Chapter~II, Section~4]{stein93bookharmonic}.

We study these maximal inequalities for operator-valued
functions.
Let $\cM$ be a semifinite von Neumann algebra equipped
with a normal faithful semifinite trace $\tau$, and put
$$
\cN=L_\infty(\bR^d)\overline\otimes\cM,\qquad
\varphi=\int_{\bR^d}dx\otimes\tau.
$$
In this setting, the noncommutative Hardy--Littlewood
maximal inequality was established by
Mei~\cite{mei07hardy}.
Mei's approach was built on previous foundational works
on noncommutative martingale and ergodic inequalities
and vector-valued noncommutative $L_p$-spaces
\cite{cuculescu71martingales,pisier98noncommutativeLp,
pisierxu97martingale,junge02doob,jungexu07ergodic};
see also \cite{lance76ergodic,yeadon77ergodic}
for earlier contributions.
The first noncommutative Calder\'on--Zygmund
decomposition was obtained by Parcet~\cite{parcet09pseudo},
who exploited Cuculescu's projections from
noncommutative martingale theory in place of classical
stopping times.
There are four terms in Parcet's decomposition:
$$
f=g+b+g_{\mathrm{off}}+b_{\mathrm{off}}.
$$
The additional terms $g_{\mathrm{off}}$ and
$b_{\mathrm{off}}$ are called off-diagonal good and
bad functions, respectively.
They do not enjoy the same properties as the diagonal
parts $g$ and $b$, which creates a substantial obstacle
to applying classical ideas or techniques, and
substantial new ideas are required.
For instance, under the Lipschitz regularity condition,
to deal with $g_{\mathrm{off}}$, Parcet
in \cite{parcet09pseudo} explored the $TT^*$ method
and a new pseudo-localization technique; see also
\cite{meiparcet09pseudo}.
This method does not directly apply under the
H\"ormander condition, so he left the sufficiency
of this condition as an open problem in
\cite[Remark~5.5]{parcet09pseudo}, see also \cite[Remark~1.3]{Parcet2026ICM}.

Subsequently, partial progress was made by Cadilhac,
Conde-Alonso and Parcet in
\cite{cadilhaccondeparcet22spectral},
where they constructed a new Calder\'on--Zygmund
decomposition by modifying $b_{\mathrm{off}}$ so that
$g_{\mathrm{off}}$ disappears.
This enables them to obtain endpoint results under
an $L_2$-H\"ormander regularity condition, which is
stronger than the usual H\"ormander condition but
weaker than the Lipschitz regularity condition.
Under an $L_2$ integral regularity condition and
a maximal strong type assumption, Hong, Lai and Xu
in \cite{honglaixu23sigular}
proved the noncommutative maximal weak type $(1,1)$
estimate for truncations of singular integral operators.
This gives a noncommutative maximal weak type $(1,1)$
inequality for families of linear operators that are
not necessarily positive.

With many applications in mind, the original purpose
of the present paper is to build a noncommutative
version of Z\'o's maximal inequalities for integrable
approximation kernels.
Then we found that the entirely new strategy designed
to overcome the limitations of the classical
vector-valued Calder\'on--Zygmund approach also works
for singular convolution kernels satisfying the
H\"ormander condition.
This provides a positive answer to the previously
mentioned problem for convolution kernels.

Our first result is a noncommutative analogue of Zo's maximal inequalities. The unexplalined notation will be specified in Section~\ref{subsec:distribution-convolutions}.

\begin{thm}\label{thm:joint}
Let $(T_n)_{n\geq1}$ be a family of scalar convolution operators on $\cN$ with kernels $K_n\in L_{1,\mathrm{loc}}(\bR^d\setminus\{0\})$. Suppose that their off-support representations hold on $\mathscr D$ and that
\begin{align}\label{joint hormander}\tag{H}\sup_{y\ne0}\int_{|x|>2|y|}\sup_{n\geq1}|K_n(x-y)-K_n(x)|\,dx\leq C_H.
\end{align}
Assume also that, for some $1<r\leq\infty$,
$$\|(T_nf)_{n\geq1}\|_{L_r(\cN;\ell_\infty)}\leq M_r\|f\|_r.$$
Then $(T_n)_{n\geq1}$ is of weak type $(1,1)$. More precisely, for every $f\in L_1(\cN)$ and $t>0$, there exists a projection $e\in\cN$ such that
$$\sup_{n\geq1}\|eT_n(f)e\|_\infty\leq t,\qquad
t\varphi(e^\perp)\leq C_{d,r}(M_r+C_H)\|f\|_1.$$
\end{thm}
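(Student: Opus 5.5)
We follow the abstract's hint: first replace the family $(T_n)$ by a simultaneous dyadic representation, then prove a maximal weak type $(1,1)$ estimate for the resulting dyadic operators via Cuculescu's projections.

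\emph{Step 1: decomposition and the diagonal part.} Fix $f\in L_1(\cN)_+$ (the general case follows by splitting into four positive parts) and $t>0$. Let $(\cN_k)_{k\in\bZ}$ be the dyadic filtration on $\bR^d$, with conditional expectations $\bE_k$ and $f_k=\bE_kf$. Let $(q_k)$ be Cuculescu's projections at level $t$, $p_k=q_{k-1}-q_k$, and $q=\bigwedge_k q_k$, so that $t\varphi(1-q)\le\|f\|_1$. We decompose $f=g+b+g_{\mathrm{off}}+b_{\mathrm{off}}$ as in Parcet's decomposition, writing $b_{\mathrm{off}}=\sum_k(p_k(f-f_k)q_k+q_k(f-f_k)p_k)$. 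The projection $e$ will be $e=q\wedge(\text{dilation of }q)\wedge e'$, where the dilation is $\zeta=\bigwedge_k\bigl(1-\bigvee_{Q\in\cQ_k}\dsone_{5Q}\,\chi_{(0,\infty)}(\bE_k(1-q_k))\bigr)$, which satisfies $t\varphi(1-\zeta)\lesssim_d\|f\|_1$. The projection $e'$ comes from Chebyshev's inequality applied to the $L_r(\ell_\infty)$ bound for the good parts. Using $\|g\|_r^r\lesssim t^{r-1}\|f\|_1$ and the hypothesis with $M_r$, we get $\sup_n\|e'T_ng\,e'\|\le t$ with the right measure control. For $r=\infty$ this is immediate.

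\emph{Step 2: the off-diagonal good part and a simultaneous dyadic representation.} Instead of estimating $g_{\mathrm{off}}$ by pseudo-localization, we avoid it. For each $k$ we take the sum over $j$ of the terms $d_j$ localized to the $q_k$/$p_k$ blocks, so that every piece supported off-diagonally lives on dyadic cubes of scale $k$ with vanishing mean. Concretely, we write
$$T_nf-T_n(\text{good})=\sum_k T_n\bigl(\Delta_k\text{-pieces}\bigr),$$
and represent each $T_n$ acting on a mean-zero atom supported on $Q\in\cQ_k$ as $\sum_k S_{n,k}$, where
$$S_{n,k}h(x)=\sum_{Q}\int_Q\bigl(K_n(x-y)-K_n(x-c_Q)\bigr)h(y)\,dy\ \text{ for } x\notin 5Q.$$
The part with $x\in5Q$ is killed by $\zeta$, using that $\zeta$ commutes appropriately with the cube projections; this is where the off-support representation on $\mathscr D$ is used. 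The key point is that all $n$ share the same dyadic pieces, with only the kernels differing. Hence the pointwise quantity $\sup_n|K_n(x-y)-K_n(x-c_Q)|$ is available, and by (H), after summing over the annuli $|x-c_Q|>2|y-c_Q|$ (scale $2^{-k}\sqrt d$), its integral is at most $C_H$ uniformly in $Q$ and $y$.

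\emph{Step 3: maximal estimate for the dyadic operators (main obstacle).} We must show that $\sup_n\|e\,\sum_k S_{n,k}(b_k)\,e\|\le t$ off a projection of measure $\lesssim C_H\|f\|_1/t$. Here $b_k=p_k(f-f_k)p_k$ together with the off-diagonal pieces $p_k(\cdot)q_k$. The kernels are complex and not positive, so we cannot pass $\sup_n$ through the integral in the operator sense directly. First we try the following. Write $K_n(x-y)-K_n(x-c_Q)=\Phi(x,y)\,\omega_n(x,y)$, with $\Phi=\sup_m|\cdots|$ and $|\omega_n|\le1$. For diagonal terms $p_kfp_k\ge0$, we use the operator inequality $|\int\omega\Phi h|\preceq$ a $2\times2$ matrix trick. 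Specifically,
$$\begin{pmatrix}\int\Phi h & \int\omega\Phi h\\ \int\bar\omega\Phi h&\int\Phi h\end{pmatrix}\ge0,$$
which gives $\|e\int\omega_n\Phi h\,e\|\le\|e\int\Phi h\,e\|$ uniformly in $n$. This reduces everything to a single positive operator $F=\sum_k\sum_Q\int_Q\Phi(\cdot,y)(p_kfp_k+p_kf_kp_k)(y)\,dy$, whose trace is at most $C_H\sum_k\varphi(p_kfp_k)\lesssim C_H\|f\|_1$. Then $e''=\chi_{[0,t]}(F)$ handles these terms by Chebyshev. For the off-diagonal terms $p_k h q_k$, we use the same matrix positivity applied to $\begin{pmatrix}p_k\\ q_k\end{pmatrix}$ blocks, i.e.\ Cauchy--Schwarz $|\int\omega\Phi\, a^*b|\le\|\int\Phi a^*a\|^{1/2}\|\int\Phi b^*b\|^{1/2}$ together with the almost-orthogonality of $(p_k)$. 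This is the step we expect to be delicate: controlling $\sum_k$ of these cross terms by a single positive majorant of trace $\lesssim C_H\|f\|_1$. We would attempt it via the square-function-free bound $\sum_k p_kf p_k\le$ Cuculescu-type estimates. Collecting all the projections yields $e$ with $t\varphi(e^\perp)\le C_{d,r}(M_r+C_H)\|f\|_1$.
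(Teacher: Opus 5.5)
\textbf{There is a genuine gap: the off-diagonal bad terms.} Your use of Cuculescu's projections, your treatment of the good part, and the diagonal half of Step~3 are sound. The $2\times2$ trick with $|\omega_n|\le1$ on the positive pieces $p_Qf(y)p_Q$ and $p_Qf_Qp_Q$, followed by one Chebyshev cut of $F$, is essentially how the paper controls $B_n$. The gap is exactly where you flag it, and it is the whole difficulty of the theorem.

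With the form of $b_{\mathrm{off}}$ you wrote (the Cadilhac--Conde-Alonso--Parcet form), $g_{\mathrm{off}}$ is absorbed automatically, so Step~2 gains nothing; the problem simply moves into $b_{\mathrm{off}}$. Since $p_Qf_Qq_Q=0$, after compressing by $\zeta$ your left off-diagonal term is $\zeta X_n\zeta$ with
$$
X_n(x)=\sum_k\sum_{Q\in\cQ_k}\dsone_{(5Q)^c}(x)\int_Q\bigl(K_n(x-y)-K_n(x-c_Q)\bigr)\,p_Qf(y)q_Q\,dy,
$$
which involves the pointwise values $f(y)$. Any Cauchy--Schwarz or row/column splitting of this sum produces a $q$-side term such as $\sum_{Q:\,p_Q\ne0}\int_Q\Phi(\cdot,y)\,q_Qf(y)q_Q\,dy$. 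The natural bound for its trace is $C_H\sum_{Q:\,p_Q\ne0}\int_Q\tau(q_Qf(y))\,dy$, and this is not controlled by $\|f\|_1$:
\begin{itemize}
\item the $q_Q$ are nested, not orthogonal;
\item noncommutatively, a point $y$ can lie in cubes with $p_Q\ne0$ at infinitely many scales, so the same mass is counted repeatedly;
\item orthogonality of the $p_k$ helps only on the other side;
\item the bound $q_kf_kq_k\le tq_k$ concerns the averages $f_k$, not $f(y)$, so it cannot be inserted.
\end{itemize}
This is precisely the obstruction in \cite[Remark~2.4]{cadilhaccondeparcet22spectral}, and ``Cuculescu-type estimates for $\sum_kp_kfp_k$'' do not touch it. Also, what you call a simultaneous dyadic representation in Step~2 is just the usual atomwise kernel formula. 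The independence of the pieces from $n$ is automatic and is not the ingredient that is needed.

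\textbf{The missing idea is to make the operators act only through dyadic averages.} The paper does this in three steps.
\begin{itemize}
\item It regularizes to kernels $K_{n,\varepsilon}$ that are bounded and Lipschitz, with H\"ormander and shell bounds uniform in $\varepsilon$.
\item It expands all of them at once as $K_{n,\varepsilon}=\sum_{R\in\cD}a_{n,R}\dsone_R$, with the common tree-tail bound $\sum_{Q\subsetneq P}|Q|\sup_n|a_{n,Q}|\lesssim\int_P\sup_n|K_{n,\varepsilon}-\bE_PK_{n,\varepsilon}|$.
\item It splits each translated cube $x-R$ into maximal dyadic cubes, giving $K_{n,\varepsilon}*f=\sum_{Q\in\cQ}b_{n,Q}\bE_Qf$ with $|b_{n,Q}|\le m_Q$ and $\Gamma\lesssim M_r+C_H$.
\end{itemize}
The left off-diagonal term then becomes $\sum_Qb_{n,Q}w\Pi_Qf_Qq_Qw=R_nC$. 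Here $\Pi_Q$ is a sum of orthogonal ancestor projections $p_P$ restricted to $(\beta P)^c$, and $q_Q$ lives at the same scale as $f_Q$. Now $f_Qq_Qf_Q\le tf_Q$ gives $\varphi(C^*C)\le t\Gamma\|f\|_1$. The rows $R_n$ are controlled for all $n$ by one cut of $\sum_Qm_Q\Pi_Q$, at cost $\lesssim\|f\|_1/t$. Cutting $C^*C$ at height $\lambda^2/(16\Gamma)$ costs $16t\Gamma^2\lambda^{-2}\|f\|_1$.

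This quadratic estimate is balanced by choosing the Cuculescu height $t=\lambda/(M_r+\Gamma)$ rather than $t=\lambda$. Your scheme uses a single height and aims for a linear positive majorant, so it has no substitute for this step. As it stands, it does not prove the theorem.
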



Our second result concerns all sharp truncations of one singular convolution kernel. 

\begin{thm}\label{thm:truncations}
Let $T$ be an $L_2(\cN)$-bounded scalar convolution operator with kernel $K:\bR^d\setminus\{0\}\to\bC$, whose off-support integral formula holds for bounded, spatially compactly supported inputs in $L_1(\cN)$. Suppose that
$$|K(x)|\leq A|x|^{-d},\qquad
\sup_{y\ne0}\int_{|x|>2|y|}|K(x-y)-K(x)|\,dx\leq C_H.$$
For $\varepsilon>0$, set
$$T_\varepsilon f(x)=\int_{|x-y|>\varepsilon}K(x-y)f(y)\,dy.$$
{ Assume that, for some $1<p_0<\infty$,
\begin{align}\label{p0}\|(T_\varepsilon f)_{\varepsilon>0}\|_{L_{p_0}(\cN;\ell_\infty)}
\leq M_{p_0}\|f\|_{p_0}.
\end{align}}
Then $(T_\varepsilon)_{\varepsilon>0}$ is of weak type $(1,1)$. More precisely, for every $f\in L_1(\cN)$ and $t>0$, there exists a projection $e\in\cN$ such that
$$\sup_{\varepsilon>0}\|eT_\varepsilon(f)e\|_\infty\leq t,\qquad
t\varphi(e^\perp)\leq C_{d,p_0}(A+C_H+M_{p_0})\|f\|_1.$$
\end{thm}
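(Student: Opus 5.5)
Theorem~\ref{thm:truncations} will be reduced to Theorem~\ref{thm:joint}. Sharp truncations do not satisfy the joint H\"ormander condition \eqref{joint hormander}, because the jump of $K\dsone_{|x|>\varepsilon}$ across the sphere $|x|=\varepsilon$ is not controlled uniformly in $\varepsilon$. So we first pass to smooth truncations and treat the difference as a positive, maximal-function-type error. Fix a radial $\phi\in C_c^\infty$ with $\phi=1$ on $|x|\le 1/2$ and $\phi=0$ for $|x|\ge1$, and set $\psi_\varepsilon(x)=1-\phi(x/\varepsilon)$ and $\tilde K_\varepsilon=K\psi_\varepsilon$. By the size condition,
\[
|K_\varepsilon(x)-\tilde K_\varepsilon(x)|\le A|x|^{-d}\dsone_{\varepsilon/2\le|x|\le\varepsilon}\le 2^dA\,\varepsilon^{-d}\dsone_{|x|\le\varepsilon}.
\]
Hence $T_\varepsilon f-\tilde T_\varepsilon f$ is the convolution of $f$ with a complex kernel dominated by $2^dA$ times the normalized indicator of a ball. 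Only countably many $\varepsilon$ are needed: by monotone-convergence and normality arguments we may take $\varepsilon$ in a countable dense set, e.g.\ the rationals, and label them $n\ge1$.

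\textbf{Step 1: the difference term.} Write the kernel $(K_\varepsilon-\tilde K_\varepsilon)$ as a combination of four nonnegative kernels (real and imaginary parts, positive and negative parts), each bounded by $2^dA\varepsilon^{-d}\dsone_{B(0,\varepsilon)}$. For a positive $f$, each of these four operators is dominated by $2^dA|B_1|$ times the ball average $M_\varepsilon f$. A general $f$ is split into four positive pieces. We then invoke Mei's noncommutative Hardy--Littlewood maximal weak type $(1,1)$, and the standard fact that a family of positive maps dominated by a weak type family is weak type. For these difference families this could also be done by applying Theorem~\ref{thm:joint} directly: each nonnegative kernel family satisfies \eqref{joint hormander}? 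Not obviously, since these kernels are rough. So we rely on positivity and domination, and then combine the finitely many projections by taking their infimum, which adds the $\varphi(e^\perp)$ bounds.

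\textbf{Step 2: the smooth truncations.} Verify \eqref{joint hormander} for $(\tilde K_\varepsilon)$. Write
\[
\tilde K_\varepsilon(x-y)-\tilde K_\varepsilon(x)=(K(x-y)-K(x))\psi_\varepsilon(x-y)+K(x)(\psi_\varepsilon(x-y)-\psi_\varepsilon(x)).
\]
The first term is bounded by $|K(x-y)-K(x)|$ uniformly in $\varepsilon$, which contributes $C_H$. For the second term, with $|x|>2|y|$, we have $|\psi_\varepsilon(x-y)-\psi_\varepsilon(x)|\lesssim\min(1,|y|/\varepsilon)\dsone_{\varepsilon/4\le|x|\le 2\varepsilon}$. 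The supremum over $\varepsilon$ is therefore bounded by
\[
A|x|^{-d}\sup_{\varepsilon\sim|x|}\min(1,|y|/\varepsilon)\lesssim A|y|\,|x|^{-d-1},
\]
whose integral over $|x|>2|y|$ is $\lesssim A$. This gives the constant $C_H+C_dA$.

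Next, the maximal $L_{p_0}$ bound for $(\tilde T_\varepsilon)$ follows from \eqref{p0} together with the $L_{p_0}(\cN;\ell_\infty)$ bound for the difference family. The latter comes from Mei's strong type maximal inequality applied to the positive dominating operators, giving $M_{p_0}+C_{d,p_0}A$. The off-support representation on $\mathscr D$ follows from the assumed off-support formula for $T$, since $\tilde K_\varepsilon$ is $K$ times a bounded smooth factor. For this step one needs $\tilde T_\varepsilon$ realized as operators on $\cN$; we define $\tilde T_\varepsilon=T_\varepsilon-(T_\varepsilon-\tilde T_\varepsilon)$, which is legitimate once $T_\varepsilon$ is known to make sense. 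Theorem~\ref{thm:joint} with $r=p_0$ then yields the weak type $(1,1)$ bound for $(\tilde T_\varepsilon)$.

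\textbf{Step 3: combining.} The decomposition $T_\varepsilon=\tilde T_\varepsilon+(T_\varepsilon-\tilde T_\varepsilon)$ is used with level $t/2$ in each part. Taking $e=e_1\wedge e_2$, we get $\|eT_\varepsilon fe\|\le t$ and $t\varphi(e^\perp)\lesssim(A+C_H+M_{p_0})\|f\|_1$. The main obstacle is a technical one: making sense of $T_\varepsilon f$ for $f\in L_1(\cN)$, and passing from the dense class and the countable family to all $\varepsilon>0$. We would handle this by proving the estimate for $f$ bounded and compactly supported, where the integral formula holds. We then extend by density using the weak-type quasi-norm continuity of noncommutative weak $L_1(\ell_\infty)$, and use right-continuity of $\varepsilon\mapsto T_\varepsilon f$ (pointwise in the weak$^*$ sense) to reduce to rational $\varepsilon$.
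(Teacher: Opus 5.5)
Your argument is correct, but it takes a different route from the paper's. It is essentially the smooth-truncation reduction of Hong--Lai--Xu, which the paper deliberately replaces by dyadic sharp truncations.

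\textbf{The paper's route.} The paper never introduces $\tilde K_\varepsilon$. Lemma~\ref{lemma:dyadic-truncations} shows that the lacunary subfamily $K\dsone_{\{|x|>2^k\}}$, $k\in\bZ$, already satisfies the joint H\"ormander condition with constant $C_H+C_dA$. The reason is that the boundary layer at scale $2^k$ contributes $\lesssim|h|/2^k$, so the supremum over $k$ can be bounded by the sum. Since $(T_{2^k})_k$ is a countable subfamily of $(T_\varepsilon)_{\varepsilon>0}$, hypothesis \eqref{p0} applies to it verbatim. Theorem~\ref{thm:joint} is therefore invoked directly, with no transfer of the strong type bound and no continuity argument in $\varepsilon$. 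For $2^k\le\varepsilon<2^{k+1}$, the paper bounds the difference kernel by $A2^{-kd}\dsone_{[-2^{k+1},2^{k+1}]^d}$. It then uses a $2\times2$ positive block with diagonal entries $4^dAM_{k+1}(|f^*|)$ and $4^dAM_{k+1}(|f|)$, together with Lemma~\ref{lemma:positive-block}. This way a single Hardy--Littlewood projection, applied to $|f|$ and $|f^*|$, controls all radii in all dyadic blocks at once, for complex kernels and arbitrary $f$.

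\textbf{What your route buys and costs.} You gain the cleaner structural fact that the smooth truncations are jointly H\"ormander over the whole continuum $\varepsilon>0$; your bound $A|y||x|^{-d-1}$ for the cutoff term is right. You pay for it with three extra steps, all valid as you describe them:
\begin{enumerate}
\item transferring \eqref{p0} to $(\tilde T_\varepsilon)$ through the strong type Hardy--Littlewood inequality;
\item reducing to rational $\varepsilon$ via continuity in $\varepsilon$ for bounded, compactly supported $f$, since Theorem~\ref{thm:joint} is stated for countable families;
\item a sixteen-term positive splitting of kernel and input where the paper uses one block matrix.
\end{enumerate}

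\textbf{A minor point.} Your worry about realizing $\tilde T_\varepsilon$ as $T_\varepsilon-(T_\varepsilon-\tilde T_\varepsilon)$, or through the off-support formula of $T$, is unnecessary. The kernel $\tilde K_\varepsilon$ is bounded and vanishes near the origin, so $\tilde T_\varepsilon$ is directly a convolution with an $L_\infty$ kernel, and its off-support representation on $\mathscr D$ holds trivially.
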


In many cases, the assumption \eqref{p0} is redudant, because it can be deduced from the Cotlar inequality; see \cite[Theorem 1.3]
{honglaixu23sigular}. Given one singular integral operator $T$ satisfying all the conditions but not necessarily \eqref{p0} in Theorem \ref{thm:truncations}, applying Theorem \ref{thm:joint} to $T_n=T$ for all $n$ implies the weak type $(1,1)$ estimate of $T$. This resolves, for convolution kernels, the open problem raised in \cite[Remark~5.5]{parcet09pseudo} and reiterated in Parcet's recent ICM survey \cite[Remark~1.3]{Parcet2026ICM}. Finally, the deduction from Theorem \ref{thm:joint} to Theorem \ref{thm:truncations} is not automatic since the kernels of $(T_\varepsilon)_{\varepsilon>0}$ do not satisfy the joint H\"ormander condition \eqref{joint hormander}; see Section \ref{pf of B} for this non trivial passage.

As with Z\'o's result(see, e.g., \cite[Chapter~II, \S\S5.10 and~5.16]{stein93bookharmonic}), Theorem~\ref{thm:joint} gives lots of applications in
Section~\ref{sec:approximation}, such as approximations of the identity, kernels with monotone $L\log L$ majorants, and Poisson integrals on symmetric spaces.
Together with transference techniques, further applications to Fourier multipliers on group von Neumann algebras, quantum Euclidean spaces and quantum tori,
Schur multipliers, and double operator integrals will be presented in Section~\ref{sec:applications}.

\bigskip

\noindent{\bf Strategy and novelities.}
As mentioned previously, the proof of Theorem~\ref{thm:joint} requires substantial new ideas. 
We will exploit the effective decomposition introduced in \cite{cadilhaccondeparcet22spectral}. The main obstruction to using the usual
$L_1$-H\"ormander condition lies in the off-diagonal
terms of the noncommutative decomposition.
Let $b_{\mathrm{off},Q}$ denote the off-diagonal bad
term associated with $Q$ in the decomposition
recalled in Section~\ref{subsec:cuculescu}.
Estimating these terms separately using the
$L_1$-H\"ormander condition and then summing the
resulting bounds would require
$$
\sum_Q\|b_{\mathrm{off},Q}\|_1
\lesssim\|f\|_1,
$$
which is unavailable for the noncommutative
decomposition; see
\cite[Remark~2.4]{cadilhaccondeparcet22spectral}.
Their proof instead uses an annular $L_2$ kernel
condition to estimate the off-diagonal terms;
see also \cite{honglaixu23sigular} for maximal
truncations.

Our new approach will not require the preceding $L_1$ bound by exploring the kernel decomposition.
It consists of two main steps.
First, we establish a weak type $(1,1)$ estimate
for operators represented by dyadic averages.
Second, we regularize the convolution operators
and construct a simultaneous dyadic expansion
of their kernels to verify the assumptions
of this estimate.

\smallskip
\textit{The weak type estimate for dyadic series.}
In Section~\ref{sec:dyadic-weak}, by assuming a maximal strong type $(r,r)$ bound
for some $1<r\leq\infty$, 
we prove a weak type
$(1,1)$ estimate for operator families of the form
$$
\mathscr S_nf
=\sum_{Q\in\cQ}b_{n,Q}\bE_Qf,
\qquad |b_{n,Q}|\leq m_Q,
$$
provided additionally
\begin{align}\label{coefficient bound}
\Gamma:=
\sup_{P\in\cQ}
\mathop{\mathrm{ess\,sup}}_{y\in P}
\sum_{\substack{Q\subseteq P\\y\in Q}}
\frac1{|Q|}
\int_{(\beta P)^c}m_Q(x)\,dx
<\infty,
\end{align}
where $\beta\geq3$ is fixed.
The proof relies on the decomposition in Proposition~\ref{lemma:good-bad-decomposition}.
We briefly explain this decomposition and how it yields the weak type $(1,1)$ estimate.
First, we use Cuculescu's construction at height $t>0$ to obtain the diagonal good function $g$. For any $\lambda>0$, independently of $t$, we then
construct a projection $w$ such that
$$
w\mathscr S_n(f-g)w=B_n+B_{\ell,n}+B_{r,n},
$$
where the three bad terms depend on $w$. Here, we choose $w$ so that each bad term has the operator norm
at most $\lambda/4$ for every $n$, while keeping $\varphi(1-w)$ small. To deduce the weak type estimate, we apply the assumed
strong type bound to the good part, and choose $t$ appropriately in terms of $\lambda$.

We now turn to the construction of $w$, where our treatment of  bad terms differs substantially from
previous approaches; see, for example, \cite{cadilhaccondeparcet22spectral}. Although the good function $g$ comes from the
standard noncommutative Calder\'on--Zygmund decomposition recalled in Lemma~\ref{lemma:cuculescu-cz}, we use the coefficient bound to choose one projection $w$ controlling the three bad terms for every $n$, with small $\varphi(1-w)$. The main difficulty lies in the off-diagonal bad terms $B_{\ell,n}$ and $B_{r,n}$, which we estimate without summing the $L_1$ norms of the individual off-diagonal bad terms.

To explain this construction, write $f_Q=\bE_Qf$, let $q_Q$ be the Cuculescu projection at height $t$, and put
$$
\Pi_Q(x)
=\sum_{P\in\mathcal P(Q)}
\dsone_{(\beta P)^c}(x)p_P,
$$
where $\mathcal P(Q)$ and $p_P$ are the stopping
ancestors and difference projections defined in
the proof of Proposition~\ref{lemma:good-bad-decomposition}.
The coefficient bound \eqref{coefficient bound} gives
$$
\varphi\left(\sum_Qm_Q\Pi_Q\right)
\leq\frac{\Gamma}{t}\|f\|_1,
\qquad
\sum_Q\varphi(m_Q\Pi_Qf_Q\Pi_Q)
\leq\Gamma\|f\|_1.
$$
After a spectral cut, we obtain a projection $w_0$
with $\varphi(1-w_0)\leq C_{d,\beta}\|f\|_1/t$,
on which the left off-diagonal sum factors as follows:
$$
\sum_Qb_{n,Q}w_0\Pi_Qf_Qq_Qw_0=R_nC,
$$
where
$$
R_n=\left(
\frac{b_{n,Q}}{\sqrt{m_Q}}w_0\Pi_Q
\right)_Q,
\qquad
C=\left(
\sqrt{m_Q}\Pi_Qf_Qq_Qw_0
\right)_Q^{\mathrm{column}},
$$
with the quotient defined to be zero when $m_Q=0$.
The preceding estimates and $f_Qq_Qf_Q\leq tf_Q$ give
$$
(R_nC)^*(R_nC)\leq\Gamma C^*C,
\qquad
\varphi(C^*C)\leq t\Gamma\|f\|_1.
$$
The same bound holds for the adjoint of the right
off-diagonal sum. Therefore, starting from $w_0$, we take a spectral cut of $C^*C$
to control both off-diagonal sums at height $\lambda/4$,
with additional trace cost at most
$16t\Gamma^2\lambda^{-2}\|f\|_1$.
We then take a further cut to control $B_n$,
obtaining the desired projection $w\leq w_0$.

\smallskip
\textit{The simultaneous kernel decomposition and dyadic representations of operators.}
To apply this abstract estimate to the operators
in Theorem~\ref{thm:joint}, we first regularize them such that the regularized kernels are bounded and Lipschitz; see Lemma \ref{lem:regularized-kernels}.

For a fixed regularization parameter $\varepsilon>0$, write
$T_{n,\varepsilon}$ and $K_{n,\varepsilon}$ for the regularized operators
and their kernels.
The main construction in
Section~\ref{sec:kernel-decomposition} gives
one family $\cD$ of dyadic cubes and coefficients
$a_{n,Q}$ such that
$
K_{n,\varepsilon}=\sum_{Q\in\cD}a_{n,Q}\dsone_Q,
$
and, for every $P\in\cD$,
$$
\sum_{\substack{Q\in\cD\\Q\subsetneq P}}
|Q|\sup_n|a_{n,Q}|
\lesssim
\int_P\sup_n|K_{n,\varepsilon}-\bE_PK_{n,\varepsilon}|;
$$
see Proposition~\ref{prop:dyadic-majorants}.

For each cube $Q\in\cD$ in the kernel expansion
and each $x\in\bR^d$, we partition $x-Q$ into maximal standard dyadic cubes.
This expresses $\dsone_Q*f(x)$ in terms of
the dyadic averages of $f$.
This converts the kernel expansion into
$$T_{n,\varepsilon}f
=\sum_{Q\in\cQ}b_{n,Q}\bE_Qf,
\qquad |b_{n,Q}|\leq m_Q.
$$
The coefficient estimates for the kernel expansion,
together with the joint H\"ormander condition
and the geometry of translated cubes, yield
$
\Gamma\leq C_{d,r}(M_r+C_H);
$
see Proposition~\ref{prop:kernel-pairing}.
This is the bound needed in
Theorem~\ref{prop:exterior-endpoint}.
Its uniformity in the regularization parameter
allows us to recover the original operators
and complete the proof of Theorem~\ref{thm:joint} in Section~\ref{sec:maximal}.

\medskip
\noindent\textbf{Limitation of the method.}
Convolution allows us to combine the kernel decomposition
with the geometry of translated cubes to obtain
$\Gamma<\infty$.
For general kernels, the coefficients depend on $x$,
and we do not know how to deduce the same bound
from the H\"ormander condition.
\bigskip

\noindent\textbf{Organization of the paper.}
Section~\ref{sec:preliminaries} collects the definitions
and preliminary results used below.
Section~\ref{sec:dyadic-weak} proves a weak type $(1,1)$
estimate for dyadic series and the decomposition
used in its proof.
Section~\ref{sec:kernel-decomposition} constructs the
simultaneous kernel decomposition and expresses the
convolution operators in terms of dyadic averages.
Section~\ref{sec:maximal} discusses the joint
H\"ormander condition and regularization, and proves
Theorems~\ref{thm:joint} and~\ref{thm:truncations}.
Section~\ref{sec:approximation} studies maximal inequalities
and approximation of the identity.
It first treats integrable kernels and dyadic approximation,
then monotone $L\log L$ majorants, and finally applications
to nilpotent groups and Poisson kernels.
Section~\ref{sec:applications} uses transference to obtain
maximal inequalities for Fourier multipliers, Schur
multipliers and double operator integrals, together with
bilateral almost uniform convergence of dyadic Fourier
summation methods on $L_1$ of group von Neumann algebras.

\section{Preliminaries}\label{sec:preliminaries}

In this section, we introduce the notation and recall basic facts about noncommutative measure spaces and convolution operators. We also recall the noncommutative Calder\'on--Zygmund decomposition and transference results used below.
\subsection{Noncommutative measure spaces}

\subsubsection{Noncommutative $L_p$-spaces}
Let $\cM$ be a semifinite von Neumann algebra equipped
with a normal faithful semifinite trace $\tau$.
Denote by $\cM_+$ its positive cone and by $S(\cM)_+$
the set of all $x\in\cM_+$ whose support projection
has finite trace. Let $S(\cM)$ be the linear span
of $S(\cM)_+$.
For $1\leq p<\infty$, the noncommutative space
$L_p(\cM)$ is the completion of $S(\cM)$ under
the norm
$$
\|x\|_p=\tau(|x|^p)^{1/p},
\qquad |x|=(x^*x)^{1/2}.
$$
As usual, $L_\infty(\cM)=\cM$ with the operator norm.

A closed densely defined operator $x$ affiliated
with $\cM$ is called $\tau$-measurable if
$$
\tau\bigl(\dsone_{(t,\infty)}(|x|)\bigr)<\infty
\qquad\text{for some }t>0.
$$
We denote the algebra of these operators by $L_0(\cM)$.
Under the usual identification,
$$
L_p(\cM)
=\{x\in L_0(\cM):\tau(|x|^p)<\infty\},
\qquad 1\leq p<\infty.
$$
And the weak $L_p$-space
$L_{p,\infty}(\cM)$ consists of those $x\in L_0(\cM)$
for which
$$
\|x\|_{p,\infty}
:=\sup_{t>0}t\,d_x(t)^{1/p}<\infty,
$$
where $d_x(t)=\tau(\dsone_{(t,\infty)}(|x|))$
is the distribution function of $x$.
This expression defines a quasi-norm, and
Chebyshev's inequality gives
$$
\|x\|_{p,\infty}\leq\|x\|_p,
\qquad x\in L_p(\cM).
$$
We refer to~\cite{pisierxu03nclp} for further details
on noncommutative $L_p$-spaces.

\subsubsection{Vector-valued noncommutative $L_p$-spaces}
For $1\leq p\leq\infty$, let
$L_p(\cM;\ell_\infty(\Lambda))$ be the space of families
$x=(x_\lambda)_{\lambda\in\Lambda}$ that admit a factorization
$$
x_\lambda=ay_\lambda b,
\qquad \lambda\in\Lambda,
$$
where $a,b\in L_{2p}(\cM)$ are independent of $\lambda$
and $(y_\lambda)_{\lambda\in\Lambda}$ is a bounded family
in $\cM$. Here $\Lambda$ is an arbitrary index set. And
the norm of $x$ in $L_p(\cM;\ell_\infty(\Lambda))$
is defined by
$$
\|x\|_{L_p(\cM;\ell_\infty(\Lambda))}
=\inf\left\{
    \|a\|_{2p}
    \sup_{\lambda\in\Lambda}\|y_\lambda\|_\infty
    \|b\|_{2p}
\right\},
$$
where the infimum runs over all such factorizations.
For $p=\infty$, this reduces to
$$
\|x\|_{L_\infty(\cM;\ell_\infty(\Lambda))}
=\sup_{\lambda\in\Lambda}\|x_\lambda\|_\infty.
$$
We refer to
\cite{pisier98noncommutativeLp,junge02doob,jungexu07ergodic}
for further details.

\begin{remark}\label{rk:MaxFunct}
Let $x=(x_\lambda)_{\lambda\in\Lambda}$ be a family
of selfadjoint operators in $L_p(\cM)$.
Then $x\in L_p(\cM;\ell_\infty(\Lambda))$ if and only if
there exists $a\in L_p(\cM)_+$ such that
$-a\leq x_\lambda\leq a$ for every $\lambda\in\Lambda$.
Moreover,
$$
\|x\|_{L_p(\cM;\ell_\infty(\Lambda))}
=\inf\left\{
    \|a\|_p:
    a\in L_p(\cM)_+,\ 
    -a\leq x_\lambda\leq a
    \ \text{for all }\lambda\in\Lambda
\right\}.
$$
In particular, if each $x_\lambda$ is positive, then
$$
\|x\|_{L_p(\cM;\ell_\infty(\Lambda))}
=\inf\left\{
    \|a\|_p:
    a\in L_p(\cM)_+,\ 
    x_\lambda\leq a
    \ \text{for all }\lambda\in\Lambda
\right\}.
$$
These properties can be found in
\cite[Remark~4.1]{chenxuyin13harmonic}.
\end{remark}


For $1\leq p<\infty$, let
$\Lambda_{p,\infty}(\cM;\ell_\infty(\Lambda))$
denote the space of families
$x=(x_\lambda)_{\lambda\in\Lambda}\subset L_0(\cM)$
for which
$$
\|x\|_{\Lambda_{p,\infty}(\cM;\ell_\infty(\Lambda))}
:=\sup_{t>0}t\inf\left\{
    \tau(1-e)^{1/p}:
    e\in\cP(\cM),\
    \sup_{\lambda\in\Lambda}\|ex_\lambda e\|_\infty\leq t
\right\}<\infty,
$$
where $\cP(\cM)$ denotes the set of projections in $\cM$.
This expression defines the weak maximal quasi-norm
of the family $x$.
If each $x_\lambda$ is selfadjoint, the condition
$\sup_{\lambda\in\Lambda}\|ex_\lambda e\|_\infty\leq t$
is equivalent to
$$
-te\leq ex_\lambda e\leq te
\qquad\text{for every }\lambda\in\Lambda.
$$
We refer to~\cite[Section~2.2]{honglaixu23sigular}
for further details.

\begin{definition}\label{def:maximal-types}
Let $1\leq p\leq\infty$, and let
$S=(S_\lambda)_{\lambda\in\Lambda}$ be a family of maps
from $L_p(\cM)$ into $L_0(\cM)$.
For $x\in L_p(\cM)$, write
$Sx=(S_\lambda x)_{\lambda\in\Lambda}$.
We say that $S$ is of strong type $(p,p)$ if there
exists a constant $C_p$ such that
$$
\|Sx\|_{L_p(\cM;\ell_\infty(\Lambda))}
\leq C_p\|x\|_p,
\qquad x\in L_p(\cM).
$$
When $p<\infty$, we say that $S$ is of weak type
$(p,p)$ if there exists a constant $C_p$ such that
$$
\|Sx\|_{\Lambda_{p,\infty}(\cM;\ell_\infty(\Lambda))}
\leq C_p\|x\|_p,
\qquad x\in L_p(\cM).
$$
\end{definition}

The following consequence of the factorization defining
$L_p(\cM;\ell_\infty)$ and Chebyshev's inequality will be used below.

\begin{lemma}\label{lemma:factor-cut}
Let $1<p<\infty$ and $x=(x_n)_{n\geq1}\in L_p(\cM;\ell_\infty)$.
For every $t>0$, there exists $e\in\cP(\cM)$ such that
$$
\sup_n\|ex_ne\|_\infty\leq t,
\qquad
\tau(1-e)\leq
\frac{2^{p+1}}{t^p}\|x\|_{L_p(\cM;\ell_\infty)}^p.
$$
\end{lemma}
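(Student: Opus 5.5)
We argue directly from the factorization in the definition of $L_p(\cM;\ell_\infty)$. Put $A=\|x\|_{L_p(\cM;\ell_\infty)}$. If $A=0$, every $x_n=0$ and $e=1$ works, so we may assume $A>0$. By definition of the infimum, there is a factorization $x_n=ay_nb$ with $a,b\in L_{2p}(\cM)$, $\sup_n\|y_n\|_\infty\le 1$ and $\|a\|_{2p}\|b\|_{2p}\le 2^{1/p}A$; the factor $2^{1/p}>1$ is the slack that produces the constant $2^{p+1}$. Rescaling $a\mapsto sa$, $b\mapsto s^{-1}b$ with a suitable $s>0$, we may further assume $\|a\|_{2p}=\|b\|_{2p}\le 2^{1/(2p)}A^{1/2}$.

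Next we take spectral cuts of $|a^*|$ and $|b|$. For a level $s>0$, to be fixed below, set $e_1=\dsone_{[0,s]}(|a^*|)$ and $e_2=\dsone_{[0,s]}(|b|)$. Since $aa^*=|a^*|^2$, we have $e_1aa^*e_1\le s^2e_1$, hence $\|e_1a\|_\infty\le s$. Likewise $b^*b=|b|^2$ gives $\|be_2\|_\infty\le s$. Chebyshev's inequality yields
$$\tau(1-e_1)\le s^{-2p}\|a\|_{2p}^{2p}\le 2s^{-2p}A^p,$$
and the same bound holds for $\tau(1-e_2)$.

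Now let $e=e_1\wedge e_2$. Then $\tau(1-e)\le\tau(1-e_1)+\tau(1-e_2)\le 4s^{-2p}A^p$. Moreover $e=ee_1=e_2e$, so
$$\|ex_ne\|_\infty=\|ee_1ay_nbe_2e\|_\infty\le\|e_1a\|_\infty\,\|y_n\|_\infty\,\|be_2\|_\infty\le s^2.$$
Choosing $s=t^{1/2}$ gives $\sup_n\|ex_ne\|_\infty\le t$ and $\tau(1-e)\le 4A^p/t^p$. Since $p>1$, we have $4\le 2^{p+1}$, which gives the stated bound.

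The only point needing care is that $a$ and $b$ are unbounded $\tau$-measurable operators. The products must therefore be justified: $e_1a$ and $be_2$ are bounded because their moduli are controlled by the spectral cuts, so the estimate above holds in $\cM$. The remaining work is bookkeeping of constants, using that $\tau(1-e_1\wedge e_2)\le\tau(1-e_1)+\tau(1-e_2)$ by Kaplansky's formula.
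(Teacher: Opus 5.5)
Your proof is correct and is the argument the paper intends: the paper states the lemma without proof, as a consequence of the factorization and Chebyshev's inequality, and you supply exactly that — a near-optimal balanced factorization $x_n=ay_nb$, spectral cuts of $|a^*|$ and $|b|$ at height $t^{1/2}$, and $e=e_1\wedge e_2$. Your slack factor $2^{1/p}$ even gives the sharper constant $4$; the paper's $2^{p+1}$ is what this argument yields with slack factor $2$.
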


We recall the noncommutative Marcinkiewicz interpolation theorem
of Junge and Xu.

\begin{lemma}[{\cite[Theorem~3.1]{jungexu07ergodic}}]
\label{lemma:positive-interpolation}
Let $1\leq p_0<p_1\leq\infty$, and let $S=(S_n)_{n\geq1}$
be a family of maps from $L_{p_0}(\cM)_++L_{p_1}(\cM)_+$
into $L_0(\cM)_+$ satisfying
$$
S_n(x+y)\leq S_n(x)+S_n(y)
\qquad(n\geq1).
$$
Suppose that $S$ is of weak type $(p_0,p_0)$ and strong type
$(p_1,p_1)$ on the positive cones, with constants $B_0$ and
$B_1$, respectively. Then, for every $p_0<p<p_1$,
$$
\|Sx\|_{L_p(\cM;\ell_\infty)}
\leq C B_0^{1-\theta}B_1^\theta
\left(\frac1{p_0}-\frac1p\right)^{-2}\|x\|_p,
\qquad x\in L_p(\cM)_+,
$$
where
$$
\frac1p=\frac{1-\theta}{p_0}+\frac{\theta}{p_1},
$$
and $C$ is a universal constant.
\end{lemma}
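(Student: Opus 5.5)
The plan is to follow the Junge--Xu strategy: decompose $x\in L_p(\cM)_+$ dyadically in its spectral levels, apply the weak type bound to the large part and the strong type bound to the small part at each level, and assemble a single positive majorant $a\in L_p(\cM)_+$ with $S_n x\leq a$ for all $n$. By Remark~\ref{rk:MaxFunct}, such an $a$ gives the $L_p(\cM;\ell_\infty)$ norm. By homogeneity we may normalize $\|x\|_p=1$ and, after rescaling the operators, $B_0=B_1=1$; the factor $B_0^{1-\theta}B_1^\theta$ is recovered at the end by the standard dilation argument, replacing $x$ by $sx$ and the levels $2^k$ by $s2^k$ and optimizing in $s$.

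\textbf{Step 1 (level splitting).} For $k\in\bZ$ and a parameter $\alpha>0$, set
$$
x=y_k+z_k,\qquad
y_k=x\dsone_{(\alpha2^k,\infty)}(x),\qquad
z_k=x\dsone_{[0,\alpha2^k]}(x).
$$
Both pieces are positive and commute with $x$. Subadditivity gives $S_nx\leq S_ny_k+S_nz_k$ for every $k$ and $n$.

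\textbf{Step 2 (level projections).} The weak type $(p_0,p_0)$ hypothesis gives a projection $e_k$ with
$$
\sup_n\|e_kS_n(y_k)e_k\|_\infty\leq2^k,\qquad
\tau(1-e_k)\lesssim2^{-kp_0}\|y_k\|_{p_0}^{p_0}.
$$
The strong type $(p_1,p_1)$ hypothesis, combined with Lemma~\ref{lemma:factor-cut} when $p_1<\infty$ (or directly when $p_1=\infty$, choosing $\alpha$ small), gives a projection $f_k$ with
$$
\sup_n\|f_kS_n(z_k)f_k\|_\infty\leq2^k,\qquad
\tau(1-f_k)\lesssim2^{-kp_1}\|z_k\|_{p_1}^{p_1}.
$$
Put $q_k=e_k\wedge f_k$. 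Then $\sup_n\|q_kS_n(x)q_k\|\leq2^{k+1}$, and summing the two trace bounds in $k$ (the usual Fubini computation over the spectral measure of $x$) yields
$$
\sum_k2^{kp}\tau(1-q_k)\lesssim\Big(\frac1{p_0}-\frac1p\Big)^{-1}+\Big(\frac1p-\frac1{p_1}\Big)^{-1}.
$$
In other words, $(S_nx)_n$ is of weak type $(p,p)$ in the $\Lambda_{p,\infty}$ sense, with the full $\ell_p$-summability over dyadic levels.

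\textbf{Step 3 (from level cuts to a single majorant).} This is the main obstacle. Unlike the commutative case, $\|qyq\|\leq\lambda$ for a positive $y$ does not give $y\leq\lambda+\text{(something supported on }q^\perp)$ directly. I would use the operator inequality
$$
y\leq2qyq+2q^\perp yq^\perp\qquad(y\geq0)
$$
and make the projections monotone, replacing $q_k$ by $\pi_k=\bigwedge_{j\geq k}q_j$, at the price of a further geometric sum in the trace estimate. I would then telescope along the increasing family $\pi_k\uparrow1$, writing $1=\sum_k(\pi_{k+1}-\pi_k)$ and controlling the cross terms. The candidate majorant is
$$
a=C\sum_k2^{k}(\pi_{k+1}-\pi_k)\quad\text{(or a variant such as }C\sum_k2^k(1-\pi_k)\text{)}.
$$
Its $L_p$ norm is bounded by $\big(\sum_k2^{kp}\tau(1-\pi_k)\big)^{1/p}$. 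Verifying $S_nx\leq a$ amounts to bounding the off-diagonal blocks $(\pi_{j+1}-\pi_j)S_nx(\pi_{k+1}-\pi_k)$ via Cauchy--Schwarz in the form $\|uyv\|\leq\|uyu\|^{1/2}\|vyv\|^{1/2}$, and then absorbing a geometric series.

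This second geometric summation (the monotonization in Step 3 together with the level sum in Step 2) is what produces the factor $(1/p_0-1/p)^{-2}$ rather than the exponent $-1$. Keeping track of it, and of the constraint that $\alpha$ be chosen uniformly in $k$, is the delicate bookkeeping I expect to be hardest.
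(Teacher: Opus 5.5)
\paragraph{Step~3 fails as proposed.}
The paper does not prove this lemma; it quotes \cite[Theorem~3.1]{jungexu07ergodic}. Your Steps~1 and~2 are the standard Marcinkiewicz bookkeeping and are correct. They produce projections $q_k$ with $\sup_n\|q_kS_n(x)q_k\|_\infty\le 2^{k+1}$ and a controlled $\sum_k2^{kp}\tau(1-q_k)$.

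The gap is Step~3, which carries the whole noncommutative difficulty. Write $d_j=\pi_j-\pi_{j-1}$. Both of your candidate majorants are comparable to $D=\sum_j2^jd_j$. But $\pi_jy\pi_j\le 2^j\pi_j$ for all $j$ does not give $y\le CD$ with a uniform $C$. Here is an example:
\begin{itemize}
\item Take $\cM=B(\ell_2^M)$ with orthonormal basis $(\epsilon_j)_{K-M<j\le K}$, let $d_j$ be the projection onto $\bC\epsilon_j$, and set $y=|\eta\rangle\langle\eta|$ with $\eta=\sum_j2^{(j-1)/2}\epsilon_j$.
\item Then $\|\pi_ky\pi_k\|_\infty=\|\pi_k\eta\|^2\le 2^k$, $\|y\|_p^p\le 2^{Kp}$, and $\sum_k2^{kp}\tau(1-\pi_k)\le C_p2^{Kp}$.
\item Yet $y\le CD$ forces $C\ge\|D^{-1/2}\eta\|^2=M/2$.
\end{itemize}
The Cauchy--Schwarz bound $\|d_jyd_k\|_\infty\le 2^{(j+k)/2}$ has no decay in $|j-k|$. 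After conjugating by $D^{-1/2}$ you are left with an infinite block matrix whose entries are merely uniformly bounded, so there is no geometric series to absorb. Separately, your monotonization costs only $\sum_{k\le j}2^{kp}\le 2\cdot 2^{jp}$. It therefore cannot be the source of a second factor $(1/p_0-1/p)^{-1}$.

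\paragraph{The level data cannot control the maximal norm.}
The issue is not the choice of majorant. The level data from Step~2 alone cannot bound the $L_p(\cM;\ell_\infty)$ norm.
\begin{itemize}
\item Let $H=\bigoplus_{j=-M}^{0}\bC^{r_j}$ with $r_j$ comparable to $2^{-jp}R$, let $d_j$ be the block projections, and put $\pi_k=\sum_{j\le k}d_j$.
\item Take a countable dense subset of the operators $|\eta\rangle\langle\eta|$ with $\eta=\sum_j2^{(j-1)/2}\zeta_j$, where $\zeta_j$ is a unit vector in the $j$th block.
\item The $\pi_k$ are admissible level cuts for this family, with $\sum_k2^{kp}\tau(1-\pi_k)\le C_pMR$.
\item The family is invariant under block-diagonal unitaries. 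Averaging any common majorant over this compact group gives a majorant $\sum_j\alpha_jd_j$ with no larger $L_p$ norm.
\item Testing on vectors forces $\sum_j2^{j-1}/\alpha_j\le1$, hence $\sum_j\alpha_j^pr_j\ge c_pR(M+1)^{p+1}$.
\end{itemize}
So the maximal norm exceeds the $p$th root of the level cost by a factor of order $M$, and $M$ is arbitrary. No argument that uses only projections $q_k$ with $\ell_p$-summable trace costs can close Step~3.

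\paragraph{What is missing.}
You have to retain more of the hypotheses than the level cuts. One option is to apply the weak and strong type bounds separately to each spectral block $x\dsone_{(2^k,2^{k+1}]}(x)$. For such a block the level costs decay geometrically away from level $k$: weak type controls the levels below, strong type the levels above. That decay is what can pay for the polynomial losses incurred when iterating $y\le(1+\epsilon)qyq+(1+\epsilon^{-1})q^\perp yq^\perp$ over levels.

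The blocks must then be recombined by some device other than the triangle inequality in $L_p(\cM)$. The triangle inequality would only give $\sum_k\|x\dsone_{(2^k,2^{k+1}]}(x)\|_p$, which can be much larger than $\|x\|_p$. This recombination mechanism is what \cite[Theorem~3.1]{jungexu07ergodic} has to supply, and your proposal does not contain it.
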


\subsubsection{Bilateral almost uniform convergence}

A sequence $(x_n)\subset L_0(\cM)$ converges bilaterally
almost uniformly to $x\in L_0(\cM)$ if, for every
$\varepsilon>0$, there is a projection $e\in\cP(\cM)$
such that
$$
\tau(1-e)<\varepsilon,
\qquad
\|e(x_n-x)e\|_\infty\longrightarrow0.
$$
We write $x_n\to x$ b.a.u.; see
\cite[Definition~6.1(i)]{jungexu07ergodic}.

For operator-valued functions on $\bR^d$, put
$$
\cN=L_\infty(\bR^d)\overline\otimes\cM,
\qquad
\varphi=\int_{\bR^d}dx\otimes\tau.
$$
Suppose that the restrictions of $f_n$ and $f$ to
every bounded measurable set are trace-measurable.
We say that $f_n\to f$ locally b.a.u. if, for every
bounded measurable set $E\subset\bR^d$ and every
$\varepsilon>0$, there is a projection
$e\leq p_E:=\dsone_E\otimes1$ such that
$$
\varphi(p_E-e)<\varepsilon,
\qquad
\|e(f_n-f)e\|_\infty\longrightarrow0.
$$
In particular, this definition applies to functions in
$L_{1,\mathrm{loc}}(\bR^d;L_1(\cM))$.

\subsection{Convolution operators}
\label{subsec:distribution-convolutions}

For $1\leq p<\infty$, we identify
$L_p(\cN)$ with $L_p(\bR^d;L_p(\cM))$.
Let $K\in L_1(\bR^d)$ and
$f\in L_p(\cN)$.
The convolution of $K$ and $f$ is defined by
$$
K*f=\int_{\bR^d}K(y)f(\,\cdot-y)\,dy,
$$
where the integral is a Bochner integral in $L_p(\cN)$.
For $p=\infty$, convolution by $K$ is defined by
weak$^*$ duality.
These definitions agree on intersections and satisfy
\begin{equation}\label{eq:integrable-kernel-bound}
    \|K*f\|_p\leq\|K\|_1\|f\|_p,
    \qquad 1\leq p\leq\infty.
\end{equation}

More generally, let $\sigma$ be a scalar distribution
on $\bR^d$ whose restriction to $\bR^d\setminus\{0\}$
is represented by
$K\in L_{1,\mathrm{loc}}(\bR^d\setminus\{0\})$.
We define the associated convolution operator $T$
on the dense subspace
$$
\mathscr D=C_c^\infty(\bR^d)\otimes S(\cM)
$$
of $L_1(\cN)$, where the tensor product is algebraic.
More specifically, for
$f=\sum_{k=1}^m u_k\otimes a_k\in\mathscr D$, we set
$$
Tf=\sum_{k=1}^m(\sigma*u_k)\otimes a_k,
\qquad
(\sigma*u_k)(x)
=\langle\sigma,u_k(x-\cdot)\rangle.
$$
Consequently,
\begin{equation}\label{eq:off-support-formula}
    Tf(x)=\int_{\bR^d}K(x-y)f(y)\,dy
    \quad\text{for almost every }
    x\notin\supp_{\bR^d}f.
\end{equation}

\subsection{Noncommutative Calder\'on--Zygmund decomposition}
\label{subsec:cuculescu}

The first noncommutative Calder\'on--Zygmund decomposition
was introduced by Parcet~\cite{parcet09pseudo}. 
But the decomposition to be used in the present paper is closer to that in \cite[Sections~1 and~2.1]{cadilhaccondeparcet22spectral}. 

For $j\in\bZ$, let $\cQ_j$ denote the standard dyadic
cubes of side length $2^{-j}$:
$$
\cQ_j=\{2^{-j}(m+[0,1)^d):m\in\bZ^d\},
\qquad
\cQ=\bigcup_{j\in\bZ}\cQ_j.
$$
For an axis-parallel cube $Q$, denote its side length
and centre by $\ell(Q)$ and $c_Q$, respectively.
For $c>0$, write $cQ$ for the concentric cube of side
length $c\ell(Q)$, and denote the average of $f$ over $Q$ by
$$
\bE_Qf=\frac1{|Q|}\int_Qf(y)\,dy.
$$
Let $\cN_j$ be the von Neumann subalgebra of $\cN$
consisting of functions constant almost everywhere
on each $Q\in\cQ_j$.
The associated trace-preserving conditional expectation is
\begin{equation}\label{eq:dyadic-expectation}
    \bE_jf
    =\sum_{Q\in\cQ_j}\dsone_Q\otimes\bE_Qf.
\end{equation}
These expectations are contractive on $L_1(\cN)$
and satisfy
\begin{equation}\label{eq:dyadic-regularity}
    \bE_jf\leq2^d\bE_{j-1}f,
    \qquad f\in L_1(\cN)_+.
\end{equation}

Fix $f\in L_1(\cN)_+$, $t>0$ and $j_0\in\bZ$,
and write $f_j=\bE_jf$.
Following Cuculescu's construction
\cite{cuculescu71martingales}, as presented in
\cite[Section~1]{cadilhaccondeparcet22spectral},
define
\begin{equation}\label{eq:cuculescu-projections}
    q_{j_0-1}=1,\qquad
    q_j=\dsone_{[0,t]}(q_{j-1}f_jq_{j-1}),
    \qquad
    p_j=q_{j-1}-q_j
    \quad(j\geq j_0),
\end{equation}
where the spectral projection is taken in
$q_{j-1}\cN_jq_{j-1}$.
Then $q_j\in\cN_j$, $q_j\leq q_{j-1}$, and
\begin{equation}\label{eq:cuculescu-compressions}
    q_jf_jq_j\leq tq_j,\qquad
    p_jf_jq_j=q_jf_jp_j=0,\qquad
    p_jf_jp_j\geq tp_j.
\end{equation}
The projections $p_j$ are pairwise orthogonal, with
$q_{j_1}+\sum_{j=j_0}^{j_1}p_j=1$
for every $j_1\geq j_0$, and satisfy
\begin{equation}\label{eq:cuculescu-trace}
    t\sum_{j\geq j_0}\varphi(p_j)\leq\|f\|_1.
\end{equation}

For each $j_1\geq j_0$, we have the decomposition
\begin{equation}\label{eq:finite-cz-decomposition}
    f_{j_1}=g+b_{\mathrm d}+b_{\mathrm{off}},
\end{equation}
where
\[
g=q_{j_1}f_{j_1}q_{j_1}
  +\sum_{j=j_0}^{j_1}p_jf_jp_j,
\qquad
b_{\mathrm d}
  =\sum_{j=j_0}^{j_1}p_j(f_{j_1}-f_j)p_j,
\]
and
\[
b_{\mathrm{off}}
  =\sum_{j=j_0}^{j_1}
   \bigl[p_j(f_{j_1}-f_j)q_j
         +q_j(f_{j_1}-f_j)p_j\bigr].
\]
The following properties can be found in
\cite[Equation~(1.1) and Lemma~1.1]
{cadilhaccondeparcet22spectral}.

\begin{lemma}\label{lemma:cuculescu-cz}
The terms in \eqref{eq:finite-cz-decomposition} satisfy
\begin{equation}\label{eq:finite-cz-L1}
    g\geq0,\qquad
    \|g\|_1=\|f\|_1,\qquad
    \sum_{j=j_0}^{j_1}
    \|p_j(f_{j_1}-f_j)p_j\|_1
    \leq2\|f\|_1.
\end{equation}
Moreover, for $j_0\leq j\leq j_1$,
\[
    \begin{aligned}
        \bE_j\bigl(p_j(f_{j_1}-f_j)p_j\bigr)&=0,\\
        \bE_j\bigl(
            p_j(f_{j_1}-f_j)q_j
            +q_j(f_{j_1}-f_j)p_j
        \bigr)&=0.
    \end{aligned}
\]
If, in addition, $f_{j_0-1}\leq t1$, then
\begin{equation}\label{eq:finite-cz-Linfty}
    p_jf_jp_j\leq2^dtp_j,
    \qquad
    \|g\|_\infty\leq2^dt.
\end{equation}
\end{lemma}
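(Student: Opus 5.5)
The plan is to verify each assertion directly from the Cuculescu relations \eqref{eq:cuculescu-compressions} and two structural facts. First, $q_j,p_j\in\cN_j\subseteq\cN_{j_1}$ for $j\leq j_1$. Second, each $\bE_j$ is a trace-preserving $\cN_j$-bimodule map satisfying $\bE_j\bE_{j_1}=\bE_j$ for $j\leq j_1$. No new construction is needed; the proof is bookkeeping with the trace and the conditional expectations.

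\emph{The $L_1$ identities.} Positivity of $g$ is immediate, since each summand is a compression $aha$ of a positive operator $h\in\{f_{j_1},f_j\}$ by a projection $a$. For the norm, I would use the bimodule property and trace preservation to write
\[
\varphi(p_jf_jp_j)=\varphi(\bE_j(p_jfp_j))=\varphi(p_jfp_j),
\qquad
\varphi(q_{j_1}f_{j_1}q_{j_1})=\varphi(q_{j_1}fq_{j_1}).
\]
The projections $q_{j_1},p_{j_0},\dots,p_{j_1}$ are pairwise orthogonal and sum to $1$. For a projection $a$, traciality gives $\varphi(af)=\varphi(afa)$. Hence
\[
\|g\|_1=\varphi(g)=\varphi(q_{j_1}fq_{j_1})+\sum_{j=j_0}^{j_1}\varphi(p_jfp_j)=\varphi(f)=\|f\|_1 .
\]
For the diagonal bad part, apply the triangle inequality to $p_jf_{j_1}p_j-p_jf_jp_j$. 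Both pieces are positive, and by the same argument each has trace $\varphi(p_jfp_j)$, because $p_j\in\cN_j\subseteq\cN_{j_1}$. Summing over $j$ gives the bound $2\sum_j\varphi(p_jfp_j)\leq 2\|f\|_1$.

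\emph{Vanishing expectations.} Since $p_j,q_j\in\cN_j$, the bimodule property pulls them outside $\bE_j$. It then suffices to note that
\[
\bE_j(f_{j_1}-f_j)=\bE_j\bE_{j_1}f-\bE_jf=0 .
\]

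\emph{The $L_\infty$ bounds under $f_{j_0-1}\leq t1$.} Because $p_j\leq q_{j-1}$, we have $p_j=p_jq_{j-1}$. Conjugating the regularity estimate \eqref{eq:dyadic-regularity} by the operator $q_{j-1}p_j$ preserves order, so
\[
p_jf_jp_j=p_jq_{j-1}f_jq_{j-1}p_j\leq 2^d\,p_j\,(q_{j-1}f_{j-1}q_{j-1})\,p_j .
\]
For $j>j_0$ we have $q_{j-1}f_{j-1}q_{j-1}\leq tq_{j-1}$ by \eqref{eq:cuculescu-compressions}. For $j=j_0$ we have $q_{j_0-1}=1$ and the hypothesis $f_{j_0-1}\leq t$. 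In both cases this yields $p_jf_jp_j\leq 2^dtp_j$. Finally, $g$ is block diagonal with respect to the orthogonal decomposition $1=q_{j_1}+\sum_jp_j$. Its blocks satisfy $q_{j_1}f_{j_1}q_{j_1}\leq tq_{j_1}$ and $p_jf_jp_j\leq 2^dtp_j$, so $\|g\|_\infty\leq 2^dt$.

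The only delicate point is justifying the trace manipulations ($\varphi(af)=\varphi(afa)$ and the bimodule property) for $f\in L_1(\cN)_+$ rather than bounded $f$. This is standard, via normality of $\varphi$ and $L_1$-continuity of $\bE_j$. Everything else is formal.
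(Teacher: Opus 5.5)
Your proof is correct and complete. The paper does not prove this lemma itself; it cites \cite[Equation~(1.1) and Lemma~1.1]{cadilhaccondeparcet22spectral}, and your argument is the same standard verification: traciality together with the trace-preserving $\cN_j$-bimodule property of $\bE_j$ for the $L_1$ identities, $\bE_j\bE_{j_1}=\bE_j$ for the vanishing expectations, and $p_jf_jp_j=p_jq_{j-1}f_jq_{j-1}p_j\le 2^d\,p_jq_{j-1}f_{j-1}q_{j-1}p_j$ for the $L_\infty$ bound.
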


Since $p_j\in\cN_j$, there are projections
$p_Q\in\cP(\cM)$ such that
$$
p_j=\sum_{Q\in\cQ_j}\dsone_Q\otimes p_Q.
$$
For $\beta\geq1$, define
\begin{equation}\label{eq:enlarged-cuculescu-projection}
    \zeta
    =1-\bigvee_{j\geq j_0}\bigvee_{Q\in\cQ_j}
       (\dsone_{\beta Q}\otimes p_Q).
\end{equation}
The following properties are given in
\cite[Lemma~3.4]{cadilhac18weak}
for odd positive integer dilation factors.
The same proof applies to every $\beta\geq1$.

\begin{lemma}\label{lemma:enlarged-cuculescu}
The projection $\zeta$ satisfies
\[
    \varphi(1-\zeta)
    \leq\frac{\beta^d}{t}\|f\|_1.
\]
Moreover, for every $Q\in\cQ_j$ with $j\geq j_0$,
\[
    \zeta(x)p_Q=p_Q\zeta(x)=0
    \quad\text{for almost every }x\in\beta Q.
\]
\end{lemma}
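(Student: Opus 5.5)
The plan is to read both claims off the definition \eqref{eq:enlarged-cuculescu-projection}: the trace bound from subadditivity of the trace on joins of projections together with \eqref{eq:cuculescu-trace}, and the orthogonality from the fact that $1-\zeta$ dominates each $\dsone_{\beta Q}\otimes p_Q$. The family $\{\dsone_{\beta Q}\otimes p_Q: Q\in\cQ_j,\ j\geq j_0\}$ is countable, so we enumerate it as $(e_k)_{k\geq1}$, and then $1-\zeta=\bigvee_k e_k$. Nothing about $\beta$ being an odd integer is used, which is why the argument of \cite[Lemma~3.4]{cadilhac18weak} goes through for every $\beta\geq1$.

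\emph{Trace estimate.} For two projections we use $\varphi(e\vee e')\leq\varphi(e)+\varphi(e')$, which follows from $e\vee e'-e\sim e'-e\wedge e'$ (Kaplansky's formula). By induction this gives the bound for finite joins. Normality of $\varphi$ then passes it to the increasing limit $\bigvee_{k\leq N}e_k\uparrow\bigvee_k e_k$, so
$$
\varphi(1-\zeta)\leq\sum_{j\geq j_0}\sum_{Q\in\cQ_j}\varphi(\dsone_{\beta Q}\otimes p_Q)
=\beta^d\sum_{j\geq j_0}\sum_{Q\in\cQ_j}|Q|\,\tau(p_Q).
$$
Since $p_j=\sum_{Q\in\cQ_j}\dsone_Q\otimes p_Q$, the inner sum equals $\varphi(p_j)$. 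Applying \eqref{eq:cuculescu-trace} then gives $\varphi(1-\zeta)\leq\beta^d t^{-1}\|f\|_1$.

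\emph{Orthogonality.} Projections in $\cN=L_\infty(\bR^d)\overline\otimes\cM$ are, after choosing representatives, measurable functions $x\mapsto e(x)$ whose values are a.e. projections in $\cM$. Moreover, $e\leq e'$ in $\cN$ iff $e(x)\leq e'(x)$ for a.e.\ $x$. For fixed $Q\in\cQ_j$ with $j\geq j_0$, we have $\dsone_{\beta Q}\otimes p_Q\leq1-\zeta$. Hence $p_Q\leq1-\zeta(x)$ for almost every $x\in\beta Q$, i.e.\ $\zeta(x)\leq1-p_Q$, which gives $\zeta(x)p_Q=p_Q\zeta(x)=0$ there.

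The only points requiring care are the countable subadditivity of $\varphi$ over a non-commuting join, which is handled by Kaplansky's formula plus normality, and the pointwise interpretation of operator inequalities in the tensor product. Since only countably many $Q$ are involved, the exceptional null sets can be discarded. I expect no genuine obstacle beyond these measure-theoretic bookkeeping steps.
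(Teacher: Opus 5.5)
Your proof is correct and follows the same route the paper takes by citing \cite[Lemma~3.4]{cadilhac18weak}. The trace bound comes from countable subadditivity of $\varphi$ over the join, together with $\varphi(\dsone_{\beta Q}\otimes p_Q)=\beta^d|Q|\tau(p_Q)$ and \eqref{eq:cuculescu-trace}; the orthogonality comes from $\dsone_{\beta Q}\otimes p_Q\leq 1-\zeta$, and, as you note, no integrality or oddness of $\beta$ is used.
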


\subsection{Auxiliary lemmas and transference}

We collect three lemmas used in the maximal estimates below,
and then recall a transference principle for positive
convolution operators.

\subsubsection{Positive operator matrices and compression estimates}

We first prove a factorization for positive operator matrices.
The following factorization is classical for bounded operators;
see \cite[Theorem~5.11]{MKX2019}. We include a proof that also
applies to measurable operators.

\begin{lemma}\label{lemma:positive-block}
Let $1\leq p\leq\infty$. Suppose that
$u,v\in L_p(\cM)_+$ and $x\in L_p(\cM)$ satisfy
$$
\begin{pmatrix}
    u & x\\
    x^* & v
\end{pmatrix}\geq0.
$$
Then there is a contraction $z\in\cM$ such that
$x=u^{1/2}zv^{1/2}$.
In particular, if $u,v\leq te$ for some $t\geq0$ and
$e\in\cP(\cM)$, then $\|x\|_\infty\leq t$.
\end{lemma}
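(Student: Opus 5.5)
The plan is to regularise the operators, build the polar-type factorisation from the $2\times2$ block, and then pass to the limit. The natural route is via the generalized square root of a positive matrix.

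\textbf{Bounded case.} Assume first that $u,v\in\cM$ and that $\cM$ acts on a Hilbert space $H$. Positivity of the block matrix means that, for all $\xi,\eta\in H$,
$$|\langle x\eta,\xi\rangle|^2\leq\langle u\xi,\xi\rangle\langle v\eta,\eta\rangle .$$
This gives $|\langle x\eta,\xi\rangle|\leq\|u^{1/2}\xi\|\,\|v^{1/2}\eta\|$. Define a sesquilinear form on $\overline{\mathrm{ran}}\,v^{1/2}\times\overline{\mathrm{ran}}\,u^{1/2}$ by
$$(v^{1/2}\eta,u^{1/2}\xi)\mapsto\langle x\eta,\xi\rangle .$$
It is well defined and bounded by $1$, and we extend it by zero on the orthogonal complements. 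This yields a contraction $z$ with $x=u^{1/2}zv^{1/2}$. For unitaries $w'$ in the commutant $\cM'$, the form is invariant, so $w'^*zw'=z$. Hence $z\in\cM''=\cM$.

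\textbf{Measurable case.} For $u,v\in L_p(\cM)_+$ the same inequality should hold on a common core, since $\tau$-measurable operators form a $*$-algebra in which positivity of the block matrix is meaningful in $M_2(L_0(\cM))$. The cleanest route is to regularise. Set
$$u_\delta=u+\delta\,\mathrm{supp}\ \text{or}\ (u+\delta1)$$
as appropriate, and form
$$z_\delta=(u+\delta)^{-1/2}x(v+\delta)^{-1/2}.$$
Conjugating the positive matrix by $\mathrm{diag}((u+\delta)^{-1/2},(v+\delta)^{-1/2})$ gives a positive matrix
$$\begin{pmatrix}a&z_\delta\\ z_\delta^*&b\end{pmatrix},\qquad a,b\leq1,$$
and a positive $2\times2$ matrix with contractive diagonal has a contractive off-diagonal entry. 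So $\|z_\delta\|\leq1$ in $\cM$. Take a weak$^*$ cluster point $z$ as $\delta\to0$. Then $(u+\delta)^{1/2}z_\delta(v+\delta)^{1/2}=x$, and we pass to the limit in the measure topology, or tested against $S(\cM)$ elements. This is where care is needed, because products of weak$^*$-convergent nets with measurable operators converge only weakly. I expect this to be the main obstacle. To handle it, pair with $y\in S(\cM)$: write
$$\tau\bigl(x\,y\bigr)=\tau\bigl(z_\delta\,(v+\delta)^{1/2}y(u+\delta)^{1/2}\bigr),$$
where the second factor converges in $L_1$ norm to $v^{1/2}yu^{1/2}$ (using $L_{2p}$ bounds and Hölder). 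This gives $\tau(xy)=\tau(u^{1/2}zv^{1/2}y)$ for all $y$, hence $x=u^{1/2}zv^{1/2}$.

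\textbf{Final assertion.} If $u,v\leq te$, then $\|u^{1/2}\|_\infty,\|v^{1/2}\|_\infty\leq t^{1/2}$. Therefore
$$\|x\|_\infty\leq t^{1/2}\cdot1\cdot t^{1/2}=t.$$
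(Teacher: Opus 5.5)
Your argument is correct, but it takes a different route from the paper's.

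\textbf{The paper's route.} The paper avoids approximation entirely. It takes the square root $R=(R_1\ R_2)$ of the block matrix in $M_2(L_0(\cM))$ and reads off $R_1^*R_1=u$, $R_2^*R_2=v$ and $R_1^*R_2=x$. It then writes the polar decompositions $R_1=w_1u^{1/2}$, $R_2=w_2v^{1/2}$ with partial isometries $w_1,w_2\in M_{2,1}(\cM)$, and sets $z=w_1^*w_2$. This is purely algebraic and handles bounded and measurable entries at once. It uses nothing about $u,v,x$ beyond $\tau$-measurability.

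\textbf{Your route.} You use a Douglas-type form argument for bounded entries, then the regularisation $z_\delta=(u+\delta)^{-1/2}x(v+\delta)^{-1/2}$, a weak$^*$ cluster point, and trace duality against $S(\cM)$. This exhibits $z$ concretely as a limit. The price is a limiting argument that genuinely uses the $L_p$ hypothesis. Your general argument already covers bounded entries, so the first step is redundant.

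\textbf{Two steps need a line more than you gave them.}
\begin{itemize}
\item Since $z_\delta$ is a priori only $\tau$-measurable, its contractivity should be checked in $L_0(\cM)$. After adding $\mathrm{diag}(1-a,1-b)\geq0$, conjugate $\begin{pmatrix}1&z_\delta\\ z_\delta^*&1\end{pmatrix}\geq0$ by $\begin{pmatrix}1&-z_\delta\\ 0&1\end{pmatrix}$; this gives $z_\delta^*z_\delta\leq1$.
\item When $\tau(1)=\infty$, the operator $(u+\delta)^{1/2}$ is not in $L_{2p}(\cM)$, so the convergence does not follow from $L_{2p}$ bounds alone. Write $y=eye$ with $\tau(e)<\infty$ and use $\|((u+\delta)^{1/2}-u^{1/2})e\|_{2p}\leq\delta^{1/2}\tau(e)^{1/(2p)}$. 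H\"older with $y\in L_{p'}(\cM)$ then gives the $L_1$ convergence you need. The same device justifies the cyclic rearrangements of the trace.
\end{itemize}
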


\begin{proof}
Write
$$
R=
\begin{pmatrix}
    u & x\\
    x^* & v
\end{pmatrix}^{1/2}
=(R_1\ R_2),
$$
where $R_1$ and $R_2$ are the columns of $R$. Then
$$
R_1^*R_1=u,\qquad
R_2^*R_2=v,\qquad
R_1^*R_2=x.
$$
By polar decomposition,
$$
R_1=w_1u^{1/2},\qquad R_2=w_2v^{1/2},
$$
where $w_1,w_2\in M_{2,1}(\cM)$ are partial isometries.
Here $M_{2,1}(\cM)$ denotes the space of $2\times1$ matrices
with entries in $\cM$, viewed as operators from $H$ to
$H\oplus H$, where $\cM$ acts on $H$.

Set $z=w_1^*w_2$. Since
$$
z^*z=w_2^*w_1w_1^*w_2\leq w_2^*w_2\leq1,
$$
we have $\|z\|_\infty\leq1$, and
$$
x=R_1^*R_2=u^{1/2}zv^{1/2}.
$$
If $u,v\leq te$, then
$$
\|x\|_\infty
\leq\|u^{1/2}\|_\infty
     \|z\|_\infty
     \|v^{1/2}\|_\infty
\leq t.
$$
\end{proof}

\begin{lemma}\label{lemma:bounded-closure}
Let $t\geq0$. Suppose that $x_j\in L_1(\cM)\cap\cM$,
$x_j\to x$ in $L_1(\cM)$, and $\|x_j\|_\infty\leq t$.
Then $x\in\cM$ and $\|x\|_\infty\leq t$.
If also $x_j\geq0$ for all $j$, then $x\geq0$.
\end{lemma}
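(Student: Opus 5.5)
We argue by duality with finite-trace projections, which avoids any direct use of spectral theory for unbounded limits. Since $\cM$ is semifinite, it is enough to test $x$ against elements of $L_1(\cM)\cap\cM$; no normality of a limit functional has to be checked by hand.

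\emph{Step 1: $x$ is bounded with $\|x\|_\infty\leq t$.} For every $y\in L_1(\cM)\cap\cM$ with $\|y\|_1\leq1$, the $L_1$ convergence $x_j\to x$ and the boundedness of $y$ give
\[
|\tau(xy)|=\lim_j|\tau(x_jy)|\leq\sup_j\|x_j\|_\infty\|y\|_1\leq t.
\]
Indeed, $|\tau((x_j-x)y)|\leq\|x_j-x\|_1\|y\|_\infty\to0$. Hence the functional $y\mapsto\tau(xy)$ is bounded in the $L_1$ norm on the dense subspace $L_1(\cM)\cap\cM$ of $L_1(\cM)$, with norm at most $t$. By the duality $L_1(\cM)^*=\cM$, there is $a\in\cM$ with $\|a\|_\infty\leq t$ and $\tau(xy)=\tau(ay)$ for all such $y$.

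\emph{Step 2: identify $x$ with $a$.} Both $x$ and $a$ are $\tau$-measurable, so $x-a\in L_0(\cM)$. We show $x-a=0$ by testing against finite-trace projections. For any projection $e$ with $\tau(e)<\infty$ and any $b\in\cM$, take $y=ebe\in L_1(\cM)\cap\cM$. Then
\[
\tau\bigl(e(x-a)e\,b\bigr)=0 .
\]
Here $e(x-a)e\in L_1(\cM)$, because $ex e\in L_1$ and $eae\in L_1$. Letting $b$ range over $\cM$ shows $e(x-a)e=0$. Finite-trace projections increase to $1$ in the strong operator topology, by semifiniteness. Therefore $e(x-a)e\to x-a$ in the measure topology, which forces $x=a$. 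This convergence claim is the one point that needs care. If one prefers to avoid it, one can note directly that $x-a\in L_1(\cM)+\cM$ and use the separating duality of $L_1(\cM)+\cM$ with $L_1(\cM)\cap\cM$.

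\emph{Step 3: positivity.} Suppose in addition that $x_j\geq0$ for all $j$. Take any $y\in L_1(\cM)\cap\cM$ with $y\geq0$. Then
\[
\tau(xy)=\lim_j\tau(x_jy)=\lim_j\tau(y^{1/2}x_jy^{1/2})\geq0 .
\]
Since $x\in\cM$, the same equality holds with $y=e$ a finite-trace projection, so $exe\geq0$ after testing with $y=e\,\xi\xi^*e$-type elements, or equivalently after using positivity of the normal functional. Concretely, $\tau(exe\,y)\geq0$ for all positive $y\in L_1\cap\cM$ gives $exe\geq0$. Letting $e\uparrow1$ strongly yields $x\geq0$.

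The main obstacle is Step 2, namely making the passage from ``equal against $L_1\cap\cM$'' to equality of measurable operators rigorous. This step rests on the standard fact that $L_1(\cM)\cap\cM$ separates points of $L_0$-type sums such as $L_1(\cM)+\cM$.
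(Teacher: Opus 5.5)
Your overall route works once Step~2 goes through your fallback, but it differs from the paper's and does more work than needed. The paper never constructs an auxiliary $a$. It writes $x=w|x|$ and takes $q=\dsone_{(t+\varepsilon,\infty)}(|x|)$, which has finite trace by Chebyshev because $x\in L_1(\cM)$. It then tests against the single element $qw^*\in L_1(\cM)\cap\cM$, for which $\|qw^*\|_1\leq\tau(q)$ and $\tau(xqw^*)=\tau(q|x|)\geq(t+\varepsilon)\tau(q)$. Your own Step~1 inequality $|\tau(xy)|\leq t\|y\|_1$, applied to this $y$, gives $(t+\varepsilon)\tau(q)\leq t\tau(q)$, hence $q=0$ and $|x|\leq t1$ directly. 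So the ``spectral theory for unbounded limits'' you wanted to avoid amounts to one Chebyshev estimate. Using it removes the need for $L_1(\cM)^*=\cM$, for $a$, and for the identification in Step~2, which is where all the real content of your argument sits.

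Two points in your write-up need correcting.

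\emph{Step~2.} The claim that $e(x-a)e\to x-a$ in measure as $e\uparrow1$ is false for general elements of $L_1(\cM)+\cM$. In $L_\infty(\bR)$, take $z=1$ and $e_n=\dsone_{[-n,n]}$; then $z-e_nze_n=\dsone_{\{|s|>n\}}$, whose distribution function is infinite at every level below $1$. Drop this claim and use the fallback, which is easy to verify. Test with $y=eb$ instead of $ebe$. Then $(x-a)e\in L_1(\cM)$ and $\tau((x-a)eb)=0$ for all $b\in\cM$, so $(x-a)e=0$ for every finite-trace projection $e$. Since these projections have supremum $1$, the right support of $x-a$ is zero.

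\emph{Step~3.} ``Testing with $e\xi\xi^*e$-type elements'' is meaningless in a general $\cM$, because rank-one operators need not belong to $\cM$. The compression by $e$ is also unnecessary. First, $x=x^*$ since $x_j=x_j^*\to x^*$ in $L_1(\cM)$. Then testing $\tau(xy)\geq0$ against the finite-trace projection $y=\dsone_{(-\infty,-\delta)}(x)$ forces $y=0$ for every $\delta>0$. The paper simply cites the norm-closedness of $L_1(\cM)_+$.
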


\begin{proof}
Let $x=w|x|$ be the polar decomposition of $x$.
For $\varepsilon>0$, put
$q=\dsone_{(t+\varepsilon,\infty)}(|x|)$.
Since $x\in L_1(\cM)$,
$$
\tau(q)\leq\frac{\|x\|_1}{t+\varepsilon}<\infty.
$$
Moreover, $\|qw^*\|_\infty\leq1$ and
$\|qw^*\|_1\leq\tau(q)$. Hence
\begin{align*}
    (t+\varepsilon)\tau(q)
    \leq\tau(q|x|)
    \leq|\tau(qw^*x_j)|+\|x-x_j\|_1
    \leq t\tau(q)+\|x-x_j\|_1.
\end{align*}
Letting $j\to\infty$ gives $\tau(q)=0$, and hence $q=0$.
Since $\varepsilon>0$ is arbitrary, $|x|\leq t1$.
The last assertion follows from the norm closedness of
$L_1(\cM)_+$.
\end{proof}

The next lemma follows by adapting the compactness argument
in the proof of \cite[Lemma~3.2]{Hong2017}.

\begin{lemma}\label{lemma:positive-closure}
Let $(x_n)_{n\geq1}\subset L_1(\cM)_+$.
Suppose that there are projections $e_N\in\cM$ and numbers
$t_N\to t>0$ and $D_N\to D<\infty$ such that
$$
e_Nx_ne_N\leq t_Ne_N
\quad(1\leq n\leq N),
\qquad
\tau(1-e_N)\leq D_N.
$$
Then there is a projection $e\in\cM$ satisfying
$$
ex_ne\leq2te
\quad(n\geq1),
\qquad
\tau(1-e)\leq2D.
$$
\end{lemma}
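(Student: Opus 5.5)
The statement to prove is Lemma~\ref{lemma:positive-closure}. I would run a weak$^*$ compactness argument on the projections $e_N$, viewed as elements of the unit ball of $\cM$, and then recover a projection by a spectral cut at level $1/2$. The factor $2$ in both conclusions comes from this cut.

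\textbf{Limit object.} Since $0\leq e_N\leq1$, the unit ball is weak$^*$ compact. Choose a subnet (or a subsequence, if the predual is separable) with $e_N\to a$ weak$^*$ for some $0\leq a\leq1$.

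\textbf{Trace bound.} Weak$^*$ lower semicontinuity of the normal trace gives
$$
\tau(1-a)\leq\liminf_N\tau(1-e_N)\leq D .
$$
This uses normality: $\tau(1-a)=\sup_F\tau(F(1-a)F)$ over finite-trace projections $F$, and for each such $F$ the map $\tau(F\,\cdot\,F)$ is weak$^*$ continuous.

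\textbf{Spectral cut.} Set $e=\dsone_{(1/2,1]}(a)$. Then $1-e\leq 2(1-a)$, so $\tau(1-e)\leq 2D$. Also $a\geq\frac12e$, and $e$ commutes with $a$.

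\textbf{Passing the compression to the limit.} Fix $n$. For every $N\geq n$ we have $e_Nx_ne_N\leq t_Ne_N$, or equivalently $e_N(t_N-x_n)e_N\geq0$. Taking a weak$^*$ limit of the quadratic form $e_N(t_N-x_n)e_N$ is the main obstacle, because multiplication is not jointly weak$^*$ continuous.

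To get around this, I would use the linear consequence instead of the quadratic one. Since $e_N$ is a projection, $e_Nx_ne_N\leq t_Ne_N$ is equivalent to
$$
\tau\bigl(\xi^*e_Nx_ne_N\xi\bigr)\leq t_N\,\tau(\xi^*e_N\xi)\quad\text{for all }\xi .
$$
Products of $e_N$ on both sides still do not pass to weak$^*$ limits.

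\textbf{Alternative: a convexity argument.} Following \cite[Lemma~3.2]{Hong2017}, I would instead use the inequality
$$
x_n^{1/2}e_Nx_n^{1/2}\leq\;?
$$
The compression $e_Nx_ne_N\leq t_Ne_N$ says that $\|x_n^{1/2}e_N\|_\infty^2\leq t_N$, so $e_Nx_ne_N\leq t_N$ and hence $x_n^{1/2}e_Nx_n^{1/2}\leq t_N$. The expression $x_n^{1/2}e_Nx_n^{1/2}$ is linear in $e_N$, so it passes to weak$^*$ limits (test it against $x_n^{1/2}\eta x_n^{1/2}$ with $\eta\in L_1\cap\cM$, which is legitimate since $x_n\in L_1$). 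This gives $x_n^{1/2}ax_n^{1/2}\leq t$, since $t_N\to t$.

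\textbf{Conclusion.} From $a\geq\frac12 e$ we get $x_n^{1/2}ex_n^{1/2}\leq 2x_n^{1/2}ax_n^{1/2}\leq 2t$. Hence $\|e\,x_n^{1/2}\|_\infty^2\leq 2t$, which means $ex_ne\leq 2te$ for every $n$.

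The measurability issues (unbounded $x_n$) would be handled by first truncating with spectral projections of $x_n$ of finite trace and then letting the truncation tend to $x_n$, using Lemma~\ref{lemma:bounded-closure}.
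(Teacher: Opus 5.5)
Your argument is correct and follows the paper's proof essentially step by step. Both use weak$^*$ compactness of $(e_N)$, lower semicontinuity of $\tau$, and a spectral cut of the limit $a$ at level $1/2$. The key device is the same too: replace $e_Nx_ne_N\leq t_Ne_N$ by the equivalent $x_n^{1/2}e_Nx_n^{1/2}\leq t_N$, which is linear in $e_N$ and so survives the weak$^*$ limit. The only minor difference is how you treat unbounded $x_n$: the paper truncates to $x_{n,k}=\min\{x_n,k\}$ and closes with Lemma~\ref{lemma:bounded-closure}, whereas your testing against $x_n^{1/2}\eta x_n^{1/2}\in L_1(\cM)$ already works directly, so the truncation you mention at the end is not needed.
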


\begin{proof}
By the Banach--Alaoglu theorem, a subnet
$(e_{N_\alpha})$ converges weak$^*$ to some
$a\in\cM$ with $0\leq a\leq1$.
By the weak$^*$ lower semicontinuity of $\tau$, we have
$$
\tau(1-a)
\leq\liminf_\alpha\tau(1-e_{N_\alpha})
\leq D.
$$
Set $e=\dsone_{[1/2,1]}(a)$, we obtain $\tau(1-e)\leq\tau(1-2a)\leq2D$.

Fix $n\geq1$ and put
$x_{n,k}=f_k(x_n)$, where $f_k(s)=\min\{s,k\}$ for $s\geq0$. we have $0\leq x_{n,k}\leq x_n$, and
$$
x_{n,k}\in L_1(\cM)\cap\cM,\qquad
\|x_{n,k}-x_n\|_1\longrightarrow0.
$$
Moreover, for fixed $k$ and all sufficiently large $\alpha$, we get
$$
e_{N_\alpha}x_{n,k}e_{N_\alpha}
\leq t_{N_\alpha}e_{N_\alpha}.
$$
Note that for $b\in\cM_+$, $p\in\cP(\cM)$ and $c\geq0$, we have
\begin{equation}\label{eq:positive-compression-equivalence}
    pbp\leq cp
    \quad\Longleftrightarrow\quad
    b^{1/2}pb^{1/2}\leq c1,
\end{equation}
since both inequalities are equivalent to
$\|b^{1/2}p\|_\infty^2\leq c$.
Thus, we get
$$
x_{n,k}^{1/2}e_{N_\alpha}x_{n,k}^{1/2}
\leq t_{N_\alpha}1.
$$
Since $x_{n,k}^{1/2}$ is bounded, 
using $e\leq2a$ and passing to the weak$^*$ limit, we obtain
$$
x_{n,k}^{1/2}ex_{n,k}^{1/2}
\leq2x_{n,k}^{1/2}ax_{n,k}^{1/2}
\leq2t1.
$$
Another application of
\eqref{eq:positive-compression-equivalence} gives
$ex_{n,k}e\leq2te$.

Finally, note that
$$
\|ex_{n,k}e-ex_ne\|_1
\leq\|x_{n,k}-x_n\|_1\longrightarrow0,
$$
we have $\|ex_ne\|_\infty\leq2t$ by Lemma~\ref{lemma:bounded-closure}.
Since $ex_ne$ is positive and supported on $e$, we have $ex_ne\leq2te$.
The projection $e$ is independent of $n$.
\end{proof}

\subsubsection{Transference for positive convolution operators}

We now recall a transference principle that transfers maximal
inequalities for positive convolution operators on $\bR^d$
to averages associated with trace-preserving actions.
The following form follows from the proofs of
\cite[Theorems~3.1 and~3.3]{hongwangliao2017noncommutative}.
Although those theorems are stated for probability measures,
their proofs apply equally to finite positive measures
$K_n(y)\,dy$, without requiring $\int_{\bR^d}K_n=1$.

Let $\alpha=(\alpha_y)_{y\in\bR^d}$ be a family of
$*$-automorphisms of $\cM$ satisfying
$\alpha_0=\mathrm{id}_{\cM}$ and
$\alpha_{y+z}=\alpha_y\circ\alpha_z$ for all $y,z\in\bR^d$.
Assume that $\tau(\alpha_y(x))=\tau(x)$ for every
$x\in\cM_+$ and $y\in\bR^d$.
For $(K_n)_{n\geq1}\subset L_1(\bR^d)_+$, define
$$
T_nF(s)=\int_{\bR^d}K_n(y)F(s+y)\,dy,
\qquad
T_n^\alpha x=\int_{\bR^d}K_n(y)\alpha_y(x)\,dy.
$$
The following statement concerns these families on the
positive cones of $L_p(\cN)$ and $L_p(\cM)$, respectively.

\begin{lemma}\label{lemma:positive-transference}
For $1<p<\infty$, a strong type $(p,p)$ bound for $(T_n)$
with constant $C_p$ implies the same bound for
$(T_n^\alpha)$.

Similarly, a weak type $(1,1)$ bound for $(T_n)$
with constant $C_1$ implies a weak type $(1,1)$ bound
for $(T_n^\alpha)$ with constant $CC_1$,
where $C$ is universal.
\end{lemma}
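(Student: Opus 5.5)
The plan is the classical Calder\'on transference argument, run separately for the strong and the weak maximal norms. The pointwise order on the positive cone is what lets us handle the maximal norms slice by slice. Throughout, $x\in L_p(\cM)_+$ (respectively $L_1(\cM)_+$) is fixed.

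\textbf{Reductions.} First I reduce to finitely many indices $1\leq n\leq N$ and to compactly supported kernels $K_n^R=K_n\dsone_{B_R}$. Since $0\leq K_n^R\leq K_n$, for positive $F$ we have $0\leq T_n^RF\leq T_nF$, where $T_n^R$ denotes convolution with $K_n^R$. Hence both hypotheses pass to $(T_n^R)$ with the same constants: a positive majorant $a$ with $T_nF\leq a$ also majorizes $T_n^RF$, and $eT_nFe\leq\lambda e$ implies $eT_n^RFe\leq\lambda e$. At the end I remove both truncations.
\begin{itemize}
\item For the strong type bound, Remark~\ref{rk:MaxFunct} reduces the matter to a monotone limit. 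As $R\to\infty$ and $N\to\infty$, $T_n^{\alpha,R}x\uparrow T_n^\alpha x$ in $L_p$, and a weak$^*$/weak compactness argument on the majorants passes this to the limit.
\item For the weak type bound, I pass to the limit with Lemma~\ref{lemma:positive-closure}, applied along the diagonal sequence $(N,R_N)$ together with $\|T_n^{\alpha,R}x-T_n^\alpha x\|_1\to0$ and Lemma~\ref{lemma:bounded-closure}. This costs the universal factor $2$ (hence the constant $C$).
\end{itemize}

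\textbf{Key identity.} Fix $L>0$ and put $F(u)=\dsone_{B_{L+R}}(u)\,\alpha_u(x)$, which is a positive element of $L_p(\cN)$ with $\|F\|_p^p=|B_{L+R}|\,\|x\|_p^p$ because $\alpha_u$ is trace preserving. For $s\in B_L$ and $|y|<R$ we have $s+y\in B_{L+R}$. Therefore
$$T_nF(s)=\int K_n^R(y)\alpha_{s+y}(x)\,dy=\alpha_s\bigl(T_n^{\alpha,R}x\bigr),\qquad s\in B_L,\ n\leq N.$$

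\textbf{Strong type.} Take a positive majorant $a\in L_p(\cN)_+$ with $T_nF\leq a$ and $\|a\|_p\leq (C_p+\delta)\|F\|_p$. By Fubini, for a.e.\ $s$ we have $a(s)\in L_p(\cM)_+$ and $\alpha_s(T_n^{\alpha,R}x)\leq a(s)$ for all $n\leq N$; only countably many $n$ are involved, so the null sets can be combined. Then $\alpha_s^{-1}(a(s))$ majorizes $(T_n^{\alpha,R}x)_n$. Averaging over $s\in B_L$ gives
$$\|(T_n^{\alpha,R}x)_n\|_{L_p(\ell_\infty)}^p\leq\frac{1}{|B_L|}\int_{B_L}\|a(s)\|_p^p\,ds\leq (C_p+\delta)^p\frac{|B_{L+R}|}{|B_L|}\|x\|_p^p .$$
Letting $L\to\infty$ and $\delta\to0$ gives the strong type bound with constant $C_p$.

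\textbf{Weak type.} Given $\lambda>0$, the hypothesis gives a projection $e\in\cN$ with $eT_nFe\leq\lambda e$ and $\varphi(1-e)\leq C_1\|F\|_1/\lambda$. For a.e.\ $s$, the slice $e(s)$ is a projection in $\cM$ satisfying $e(s)\alpha_s(T_n^{\alpha,R}x)e(s)\leq\lambda e(s)$. By averaging, choose $s\in B_L$ with
$$\tau(1-e(s))\leq\frac{1}{|B_L|}\int_{B_L}\tau(1-e(u))\,du\leq\frac{C_1|B_{L+R}|}{\lambda|B_L|}\|x\|_1 .$$
Then $f=\alpha_s^{-1}(e(s))$ is a projection in $\cM$ with $fT_n^{\alpha,R}xf\leq\lambda f$ and the same trace bound. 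Letting $L\to\infty$ and then removing the truncations as described gives the weak type $(1,1)$ bound with constant $CC_1$.

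\textbf{Main obstacle.} The step I expect to need the most care is the measurability of the slicing, in two places.
\begin{itemize}
\item One must check that $s\mapsto e(s)$ and $s\mapsto a(s)$ are well-defined measurable fields, so that the order inequalities hold fibrewise a.e.; I would justify this by identifying $L_p(\cN)$ with $L_p(\bR^d;L_p(\cM))$ and testing against $\dsone_E\otimes y$.
\item In the weak type limit, the projection depends on $(N,R,L)$, so the passage to all $n$ and to the untruncated kernels cannot reuse a fixed projection. This is why the compactness of Lemma~\ref{lemma:positive-closure} is needed rather than a monotone argument.
\end{itemize}
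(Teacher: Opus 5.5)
Your proposal is correct and follows essentially the same route as the paper. The paper gives no independent proof but defers to the Calder\'on transference arguments in the proofs of \cite[Theorems~3.1 and~3.3]{hongwangliao2017noncommutative}: truncate the kernels, transfer on large balls through $u\mapsto\alpha_u(x)$, and remove the truncations by a limiting argument, as you do with Lemma~\ref{lemma:positive-closure}. Your fibrewise slicing of the projection can be justified along the lines you suggest, for instance by viewing $1-e\in L_1(\cN)\cap L_2(\cN)$ as an $L_1(\cM)$-valued Bochner function whose values are almost everywhere finite-trace projections.
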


\section{A weak type \texorpdfstring{$(1,1)$}{(1,1)}
estimate for dyadic series}
\label{sec:dyadic-weak}

In this section, we prove a weak type $(1,1)$ estimate
for operator families represented by dyadic averages.
We first state and prove the theorem, using a decomposition
whose proof is given in the second subsection.

\subsection{The weak type estimate}
\label{subsec:dyadic-weak-estimate}

Let $\mathscr D_0$ denote the set of functions
$f\in L_1(\cN)\cap L_\infty(\cN)$ with bounded support.
Fix $\beta\geq3$, and suppose that a countable family
$(\mathscr S_n)_{n\geq1}$ admits the representation
\[
    \mathscr S_nf
    =\sum_{Q\in\cQ}b_{n,Q}\bE_Qf,
    \qquad
    |b_{n,Q}|\leq m_Q,
    \qquad f\in\mathscr D_0,
\]
with absolute convergence in local $L_1$.
Here $b_{n,Q}$ are measurable scalar functions, and
$m_Q$ are nonnegative measurable functions independent
of $n$ and finite almost everywhere.
Put
\begin{equation}\label{eq:exterior-budget}
    \Gamma
    =\sup_{P\in\cQ}
      \mathop{\mathrm{ess\,sup}}_{y\in P}
      \sum_{\substack{Q\subseteq P\\y\in Q}}
      \frac1{|Q|}
      \int_{(\beta P)^c}m_Q(x)\,dx.
\end{equation}

\begin{theorem}\label{prop:exterior-endpoint}
Suppose that $\Gamma<\infty$ and that, for some
$1<r\leq\infty$,
$$
\|(\mathscr S_nf)_n\|_{L_r(\cN;\ell_\infty)}
\leq M_r\|f\|_r,
\qquad f\in L_r(\cN).
$$
Then, for every $f\in\mathscr D_0$ and $\lambda>0$,
there is a projection $e\in\cN$ such that
$$
\sup_n\|e\mathscr S_n(f)e\|_\infty\leq\lambda,
\qquad
\lambda\varphi(1-e)
\leq C_{d,\beta,r}(M_r+\Gamma)\|f\|_1.
$$
\end{theorem}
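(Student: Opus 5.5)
We follow the strategy outlined in the introduction. First reduce to $f\in\mathscr D_0$ positive: every $f\in\mathscr D_0$ is a combination of four positive elements of $\mathscr D_0$ with comparable $L_1$ norms. Since each $\mathscr S_n$ is linear, intersecting the four resulting projections costs only a constant factor in the trace. Fix $f\geq0$ in $\mathscr D_0$ and $\lambda>0$, and set $t=\lambda/(c_d(M_r+\Gamma))$ for a constant $c_d$ to be tuned. Because $f$ is bounded with bounded support, we may choose $j_0$ so small that $f_{j_0-1}\leq t1$; this holds since $\|f_{j_0-1}\|_\infty\leq\|f\|_1/|Q|\to0$ as $j_0\to-\infty$. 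By local $L_1$ convergence we may also pass to $f_{j_1}$ with $j_1$ large, and conclude with Lemma~\ref{lemma:positive-closure} or an approximation argument. We then apply the Cuculescu decomposition \eqref{eq:finite-cz-decomposition}, $f_{j_1}=g+b_{\mathrm d}+b_{\mathrm{off}}$.

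\emph{Good part.} By Lemma~\ref{lemma:cuculescu-cz}, $\|g\|_r^r\leq(2^dt)^{r-1}\|f\|_1$. Lemma~\ref{lemma:factor-cut}, applied with threshold $\lambda/4$, gives a projection $e_g$ with $\varphi(1-e_g)\lesssim M_r^r t^{r-1}\|f\|_1/\lambda^r\lesssim\|f\|_1/\lambda$ for our choice of $t$. When $r=\infty$ we simply take $e_g=1$ and use $\|\mathscr S_ng\|_\infty\leq M_\infty 2^dt\leq\lambda/4$.

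\emph{Bad parts.} Take the projection $\zeta$ of Lemma~\ref{lemma:enlarged-cuculescu} with dilation $\beta$, which costs $\beta^d\|f\|_1/t$. For a dyadic $Q\in\cQ_k$ and a level $j\leq k$, the vanishing conditional expectations in Lemma~\ref{lemma:cuculescu-cz} give $\bE_Q$ of the level-$j$ bad pieces $=0$. Hence only cubes $Q$ strictly contained in some stopping cube $P\in\cQ_j$ with $p_P\neq0$ contribute. Moreover, $\bE_Q(p_j(f_{j_1}-f_j)p_j)=p_P(f_Q-f_P)p_P$ is supported on $P$. After compression by $\zeta(x)$, the term $b_{n,Q}(x)\zeta(x)p_P\cdots$ survives only for $x\notin\beta P$. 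This produces the cut-off $\Pi_Q(x)=\sum_{P\in\cP(Q)}\dsone_{(\beta P)^c}(x)p_P$, where $\cP(Q)$ denotes the set of stopping ancestors $P\supsetneq Q$. We write $w\mathscr S_n(f_{j_1}-g)w=B_n+B_{\ell,n}+B_{r,n}$, where $B_n$ is the sum of $b_{n,Q}\Pi_Q(f_Q-f_P)\Pi_Q$-type diagonal terms and $B_{\ell,n}$, $B_{r,n}$ are the sums $\sum_Q b_{n,Q}w\Pi_Qf_Qq_Qw$ and their mirror images. The terms involving $f_P$ are handled similarly, since $p_Pf_Pp_P\leq2^dtp_P$.

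The key trace estimate comes from \eqref{eq:exterior-budget}. Reorganizing by $P$ and integrating $m_Q$ over $(\beta P)^c$, we get
$$\varphi\Bigl(\sum_Q m_Q\Pi_Q f_Q\Pi_Q\Bigr)\leq\sum_P\tau\Bigl(p_P\int_P\sum_{y\in Q\subseteq P}\frac{1}{|Q|}\int_{(\beta P)^c}m_Q\,f(y)\,dy\Bigr)\leq\Gamma\|f\|_1,$$
and similarly $\varphi(\sum_Q m_Q\Pi_Q)\leq\Gamma\sum_P\tau(p_P)|P|\leq\Gamma\|f\|_1/t$. We handle $B_n$ by Chebyshev on $\sum_Q m_Q\Pi_Q(f_Q+f_P)\Pi_Q$, which dominates $\pm B_n$ in the block sense via Lemma~\ref{lemma:positive-block}. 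This costs $\lesssim\Gamma\|f\|_1/\lambda$.

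For the off-diagonal terms, first take a spectral cut $w_0$ of $\sum_Q m_Q\Pi_Q$ at height $1$, which costs $\Gamma\|f\|_1/t$. Then factor $B_{\ell,n}=R_nC$ with the row and column described in the introduction. Since $|b_{n,Q}|^2/m_Q\leq m_Q$, we get $R_nR_n^*\leq w_0(\sum m_Q\Pi_Q)w_0\leq\Gamma$ after arranging heights; we may need height $\Gamma$ rather than $1$. Using $q_Qf_Qq_Q\leq t$, we also get $\varphi(C^*C)\leq t\Gamma\|f\|_1$. A spectral cut $w_1=\dsone_{[0,\lambda^2/(16\Gamma)]}(C^*C)$ then bounds $\|B_{\ell,n}w_1\|\leq\lambda/4$ uniformly in $n$, at cost $16t\Gamma^2\|f\|_1/\lambda^2\lesssim\Gamma\|f\|_1/\lambda$. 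The right term is treated symmetrically, and we let $e$ be the intersection of all these projections.

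The main obstacle is this off-diagonal step. One must verify that after compression the left and right off-diagonal sums are exactly of the factored form $R_nC$, and make the compressions compatible, since $w_1$ lives on the right of $C$ while $w_0$ sits inside $R_n$. This requires ordering the cuts so that $w\leq w_0\wedge w_1$ with $w$ on both sides, and using $q_Q$ commuting appropriately with the $p_P$ of ancestors.
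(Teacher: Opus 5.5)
Apart from one step, your plan is the paper's argument. It has the same ingredients: Cuculescu projections at height $t$, the projection $\zeta$, the cut-offs $\Pi_Q$, a cut of $\sum_Qm_Q\Pi_Q$ at height $\Gamma$, the factorization $B_{\ell,n}=R_nC$ with a cut of $C^*C$ at $\lambda^2/(16\Gamma)$, and $t\simeq\lambda/(M_r+\Gamma)$. Height $\Gamma$ is indeed required; height $1$ would cost $\Gamma(M_r+\Gamma)\|f\|_1/\lambda$. The compatibility issue you flag disappears once $C$ carries $w_0$ on the right and $w\le w_0\wedge w_1$.

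\textbf{The gap: truncation to $f_{j_1}$.} The finite decomposition \eqref{eq:finite-cz-decomposition} only controls $\mathscr S_n(f_{j_1})$, and neither suggested way back to $\mathscr S_n(f)$ works in general.
Lemma~\ref{lemma:positive-closure} requires $x_n\ge0$: its proof linearizes $pbp\le cp$ as $b^{1/2}pb^{1/2}\le c$. But $\mathscr S_nf$ is not even selfadjoint, since the $b_{n,Q}$ are complex. Convergence in local $L_1$ for each fixed $n$ also does not produce one projection that works for all $n$.
For $1<r<\infty$ you can repair this with the hypothesis itself: $\|(\mathscr S_n(f-f_{j_1}))_n\|_{L_r(\cN;\ell_\infty)}\le M_r\|f-f_{j_1}\|_r\to0$, so Lemma~\ref{lemma:factor-cut} makes the error cheap.
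For $r=\infty$ you only get $\sup_n\|\mathscr S_n(f-f_{j_1})\|_\infty\le M_\infty\|f-f_{j_1}\|_\infty$, which need not be small, so the argument does not close. This is exactly the case used for integrable kernels (Corollary~\ref{thm:main-hormander}).

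\textbf{How the paper avoids it.} The paper never truncates. It sets $g_s=q_sf_sq_s+\sum_{j=j_0}^sp_jf_jp_j$ and checks $\bE_{s-1}g_s=g_{s-1}$, using $q_{s-1}=q_s+p_s$ and $p_sf_sq_s=0$. Thus $(g_s)$ is an $L_2$-bounded martingale whose limit $g\in\mathscr D_0$ satisfies $0\le g\le2^dt$, $\|g\|_1=\|f\|_1$, and $\bE_Qg=q_Qf_Qq_Q+\sum_{P\in\mathcal P(Q)}p_Pf_Pp_P$ at every scale. Hence $\bE_Q(f-g)$ decomposes exactly for all $Q$, with no error term.

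\textbf{Smaller corrections.}
The level-$j$ pieces have vanishing $\bE_Q$ for $Q\in\cQ_k$ with $k\le j$, not $j\le k$. Your conclusion that only strict subcubes of $P$ contribute is nevertheless correct.
If $\mathcal P(Q)$ contains only strict ancestors, then $\sum_{P\in\mathcal P(Q)}p_P=1-q_{k-1}$ rather than $1-q_Q$, and your three sums miss $\Pi_Qf_Qp_Q+p_Qf_Q\Pi_Q$. Either include $Q$ in $\mathcal P(Q)$, as the paper does, or replace $q_Q$ by $q_{k-1}$ and use $q_{k-1}f_kq_{k-1}\le2^dtq_{k-1}$.
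Finally, $\|f_{j_0-1}\|_\infty\le\|f\|_1/|Q|$ fails for operator norms. Use $\|\bE_jf\|_\infty\le2^{jd}|\supp_{\bR^d}f|\,\|f\|_\infty$ instead.
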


The theorem follows from the following decomposition proposition, whose proof
is given in the next subsection. As in the earlier noncommutative Calder\'on--Zygmund
decomposition recalled in Lemma~\ref{lemma:cuculescu-cz},
we use Cuculescu's projections to construct a good
function $g$. 
Here we rearrange the terms arising from
$b_{\mathrm d}$ and $b_{\mathrm{off}}$ into three sums after applying
$\mathscr S_n$.
The bound \eqref{eq:exterior-budget} allows us to control
these sums on one projection $w$ for every $n$,
without requiring a bound for the sum of the $L_1$ norms
of the individual off-diagonal bad terms.

\begin{proposition}\label{lemma:good-bad-decomposition}
Suppose that $\Gamma<\infty$.
For every $f\in\mathscr D_0$ with $f\geq0$ and every
$t,\lambda>0$, there exist $g\in\mathscr D_0$,
a projection $w\in\cN$, and operators
$B_n,B_{\ell,n},B_{r,n}\in w\cN w$ such that
\begin{equation}\label{eq:operator-good-bad}
    w\mathscr S_n(f)w
    =w\mathscr S_n(g)w+B_n+B_{\ell,n}+B_{r,n},
    \qquad n\geq1.
\end{equation}
Moreover,
\begin{equation}\label{eq:good-input-bounds}
    0\leq g\leq2^dt1,
    \qquad
    \|g\|_1=\|f\|_1,
\end{equation}
\begin{equation}\label{eq:bad-output-bounds}
    \sup_{n\geq1}
    \max\bigl\{
        \|B_n\|_\infty,\,
        \|B_{\ell,n}\|_\infty,\,
        \|B_{r,n}\|_\infty
    \bigr\}
    \leq\lambda/4,
\end{equation}
and
\begin{equation}\label{eq:bad-projection-bound}
    \varphi(1-w)
    \leq C_{d,\beta}
    \left(
        \frac1t+\frac{\Gamma}{\lambda}
        +\frac{t\Gamma^2}{\lambda^2}
    \right)\|f\|_1.
\end{equation}
\end{proposition}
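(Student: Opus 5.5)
The plan is to prove Proposition~\ref{lemma:good-bad-decomposition} by running Cuculescu's construction at height $t$, starting at a coarse level $j_0$, and then building $w$ from three successive spectral cuts.

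Since $f\in\mathscr D_0$ is bounded with bounded support, $\bE_{j_0-1}f\leq t1$ for $j_0$ sufficiently negative. For $j_1$ large, $f_{j_1}$ is close to $f$ in $L_1$; to avoid a limit I would take $j_1=\infty$, i.e.\ replace $f_{j_1}$ by $f$, which is legitimate for bounded compactly supported $f$ after the usual limiting argument. Decompose $f=g+b_{\mathrm d}+b_{\mathrm{off}}$ as in \eqref{eq:finite-cz-decomposition}. Lemma~\ref{lemma:cuculescu-cz} then gives \eqref{eq:good-input-bounds} directly: $0\le g\le 2^dt$ and $\|g\|_1=\|f\|_1$. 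Applying $\mathscr S_n$ to $b_{\mathrm d}+b_{\mathrm{off}}$, the cancellation $\bE_j(\cdot)=0$ kills every $\bE_Q$ with $Q\in\cQ_j$. So for each $Q$ only the stopping levels $j\le$ the generation of $Q$ contribute. Writing $\bE_Q(p_j(f-f_j)q_j)=p_P(f_Q-f_P)q_{\cdot}$, where $P\supseteq Q$ is the ancestor at level $j$, the sum over ancestors $P$ produces the operators $\Pi_Q$ of the introduction.

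First cut: let $\zeta$ be as in Lemma~\ref{lemma:enlarged-cuculescu}, which costs $\beta^d\|f\|_1/t$. On $\zeta$, every term with $x\in\beta P$ disappears, so only the factors $\dsone_{(\beta P)^c}(x)p_P$ survive. Thus $\zeta\mathscr S_n(b_{\mathrm d}+b_{\mathrm{off}})\zeta$ splits into a diagonal part $B_n$ built from $p_P(\cdot)p_P$, a left off-diagonal part $\sum_Q b_{n,Q}\Pi_Qf_Qq_Q$, a right off-diagonal part, and terms involving the coarse averages $f_P$. The $f_P$ terms I would regroup using $p_Pf_Pq_P=0$ and the nesting $q_j\le q_{j-1}$. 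This regrouping is where the bookkeeping is delicate.

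Second cut: the coefficient bound \eqref{eq:exterior-budget} yields, via Fubini, $\varphi(\sum_Q m_Q\Pi_Q)\le \Gamma t^{-1}\|f\|_1$, using $t\varphi(p_P)\le\|p_Pf\|_1$ summed as in \eqref{eq:cuculescu-trace}. Take $w_0=\zeta\wedge\dsone_{[0,1]}(\sum_Q m_Q\Pi_Q)$. A Chebyshev cut at level $1$ only costs $\Gamma\|f\|_1/t$, but to have $w_0\Pi\, w_0$-type bounds by $\Gamma$ I would cut at level $\Gamma$ instead, costing $\|f\|_1/t$. Then factor the left sum as $R_nC$, exactly as in the introduction. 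The row bound $R_nR_n^*\le w_0(\sum_Q m_Q\Pi_Q)w_0\le\Gamma$ gives $(R_nC)^*(R_nC)\le\Gamma C^*C$ uniformly in $n$. Moreover, $\varphi(C^*C)=\sum_Q\varphi(m_Q w_0q_Qf_Q\Pi_Qf_Qq_Qw_0)$, and after using $\Pi_Q\le 1$ and $q_Qf_Qq_Q\le t$ this is bounded by $t\sum_Q\varphi(m_Q\Pi_Qf_Q\Pi_Q)\le t\Gamma\|f\|_1$. A spectral cut $w_1=w_0\wedge\dsone_{[0,\lambda^2/(16\Gamma)]}(C^*C)$ costs $16t\Gamma^2\lambda^{-2}\|f\|_1$ and gives $\|B_{\ell,n}\|\le\lambda/4$. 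The right sum is handled symmetrically; as the adjoint structure gives the same column, one cut suffices.

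Third cut: for the diagonal term $B_n$, I would bound $|B_n|$ by a positive operator $D=\sum_Q m_Q\Pi_Q|f_Q-f_P|\Pi_Q$ independent of $n$, with $\varphi(D)\lesssim\Gamma\|f\|_1$ by the $L_1$ bound in \eqref{eq:finite-cz-L1} together with \eqref{eq:exterior-budget}. Cutting at level $\lambda/4$ costs $4\Gamma\|f\|_1/\lambda$. Lemma~\ref{lemma:positive-block} converts the positive-matrix domination into operator-norm bounds on the compressed $B_n$. Finally set $w=w_1\wedge(\text{this cut})$.

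The main obstacle I expect is the exact regrouping of $\mathscr S_n(b_{\mathrm d}+b_{\mathrm{off}})$ into the three sums, so that the off-diagonal pieces factor with $f_Q$ on the $q_Q$ side and no uncontrolled $f_P$ remainder. I would first try to use $p_Pf_Pq=0$ to absorb the $f_P$ terms into $B_n$.
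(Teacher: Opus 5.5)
Your outline matches the paper's proof everywhere except the diagonal term. The shared ingredients are the Cuculescu good function, the cut $\zeta$, the cut $w_0$ at level $\Gamma$, the factorization $R_nC$ with a spectral cut of $C^*C$ at level $\lambda^2/(16\Gamma)$, and the observation that $B_{r,n}^*$ has the same column.

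The paper does the regrouping you flag as the main obstacle in one line. It builds $g$ as the $L_2$-limit of the martingale $g_s=q_sf_sq_s+\sum_{j=j_0}^{s}p_jf_jp_j$, which also replaces your $j_1=\infty$ limiting argument, since $\bE_sg=g_s$. This gives $\bE_Qg=q_Qf_Qq_Q+\sum_{P\in\mathcal P(Q)}p_Pf_Pp_P$, hence
\[
f_Q-\bE_Qg=(1-q_Q)f_Q(1-q_Q)-\sum_{P\in\mathcal P(Q)}p_Pf_Pp_P+(1-q_Q)f_Qq_Q+q_Qf_Q(1-q_Q).
\]
This is your identity $p_Pf_Pq_P=0$ combined with the nesting, done once and for all.

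The gap is in your third cut. The off-diagonal pieces must carry $q_Q$: only $q_Qf_Qq_Q\leq tq_Q$ is available, and $q_Pf_Qq_P$ is not controlled by $t$ when $Q\subsetneq P$. So the diagonal summand is forced to be
\[
X_Q=\Pi_Qf_Q\Pi_Q-\sum_{P\in\mathcal P(Q)}\dsone_{(\beta P)^c}p_Pf_Pp_P,
\]
which contains the cross terms $p_Pf_Qp_{P'}$ for distinct ancestors $P\neq P'$. Your majorant $D=\sum_Qm_Q\Pi_Q|f_Q-f_P|\Pi_Q$ fails in either reading:
\begin{itemize}
\item Read as $\sum_P\dsone_{(\beta P)^c}|p_P(f_Q-f_P)p_P|$, it misses the cross terms, and \eqref{eq:finite-cz-L1} only sees the blocks $p_j(\cdot)p_j$.
\item Read as a compression $\Pi_Q|\cdot|\Pi_Q$ of an absolute value, it has no usable trace bound, because $\tau(p|A-B|p)$ can be far larger than $\tau(pAp)+\tau(pBp)$ for $A,B\geq0$.
\end{itemize}

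The missing idea is to use positivity instead of absolute values. Write $X_Q=Y_Q-Z_Q$ with $Y_Q=\Pi_Qf_Q\Pi_Q\geq0$ and $Z_Q=\sum_P\dsone_{(\beta P)^c}p_Pf_Pp_P\geq0$. Their traces are linear in $f\geq0$ via $\tau(p_Pf_Q)=|Q|^{-1}\int_Q\tau(p_Pf(y))\,dy$. Fubini against \eqref{eq:exterior-budget} then gives $\sum_Q\varphi(m_QY_Q)\leq\Gamma\|f\|_1$ and $\sum_Q\varphi(m_QZ_Q)\leq\Gamma\|f\|_1$; you already use the first bound for the column. Next, use
\[
\begin{pmatrix} Y & zY\\ \overline{z}Y & Y\end{pmatrix}\geq0\qquad(Y\geq0,\ |z|\leq1).
\]
Apply this to $wY_Qw$ with $z=b_{n,Q}/m_Q$ and to $wZ_Qw$ with $z=-b_{n,Q}/m_Q$, and sum. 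This gives block positivity with diagonal $\sum_Qm_Qw(Y_Q+Z_Q)w$. Cutting $\sum_Qm_Qw_0(Y_Q+Z_Q)w_0$ at level $\lambda/4$ costs $8\Gamma\|f\|_1/\lambda$, and Lemma~\ref{lemma:positive-block} then yields $\|B_n\|_\infty\leq\lambda/4$.

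A smaller point: in your column estimate you must keep $\Pi_Q$ and use $f_Qq_Qf_Q\leq tf_Q$, from \eqref{eq:positive-compression-square}. Invoking $\Pi_Q\leq1$ would discard the localization to $(\beta P)^c$ that \eqref{eq:exterior-budget} needs.
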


\begin{proof}[Proof of Theorem~\ref{prop:exterior-endpoint}]
We first assume that $f\geq0$.
If $M_r+\Gamma=0$, the conclusion follows immediately
from the assumed strong type estimate.
Otherwise, apply Proposition~\ref{lemma:good-bad-decomposition}
with parameters $t,\lambda>0$.

Suppose first that $1<r<\infty$.
By \eqref{eq:good-input-bounds}, we have
$$
\|g\|_r^r
\leq\|g\|_\infty^{r-1}\|g\|_1
\leq(2^dt)^{r-1}\|f\|_1.
$$
Hence
$$
\|(\mathscr S_ng)_n\|_{L_r(\cN;\ell_\infty)}^r
\leq M_r^r(2^dt)^{r-1}\|f\|_1.
$$
Lemma~\ref{lemma:factor-cut} gives a projection $e_g$
such that
$$
\sup_n\|e_g\mathscr S_n(g)e_g\|_\infty
\leq\lambda/4,
\qquad
\varphi(1-e_g)
\leq C_{d,r}M_r^rt^{r-1}\lambda^{-r}\|f\|_1.
$$
Set $e=w\wedge e_g$.
By \eqref{eq:operator-good-bad} and
\eqref{eq:bad-output-bounds}, we get
$$
\sup_n\|e\mathscr S_n(f)e\|_\infty\leq\lambda.
$$
Moreover, we obtain
\begin{align*}
    \lambda\varphi(1-e)
    &\leq
      \lambda\varphi(1-w)+\lambda\varphi(1-e_g)\\
    &\leq
      C_{d,\beta,r}
      \left[
          \frac{\lambda}{t}
          +\Gamma
          +\frac{t\Gamma^2}{\lambda}
          +M_r^r
           \left(\frac{t}{\lambda}\right)^{r-1}
      \right]\|f\|_1.
\end{align*}
Taking $t=\lambda/(M_r+\Gamma)$, we deduce that
$$
\lambda\varphi(1-e)
\leq C_{d,\beta,r}(M_r+\Gamma)\|f\|_1.
$$

For $r=\infty$, take
$$
t=\frac{\lambda}{2^{d+2}(M_\infty+\Gamma)}.
$$
Then, we have
$$
\sup_n\|\mathscr S_ng\|_\infty
\leq M_\infty\|g\|_\infty
\leq2^dtM_\infty
\leq\lambda/4.
$$
Thus by \eqref{eq:operator-good-bad} and
\eqref{eq:bad-output-bounds},
$$
\sup_n\|w\mathscr S_n(f)w\|_\infty\leq\lambda,
$$
and \eqref{eq:bad-projection-bound} yields
$$
\lambda\varphi(1-w)
\leq C_{d,\beta}(M_\infty+\Gamma)\|f\|_1.
$$

For arbitrary $f\in\mathscr D_0$, write
$$
f=(\operatorname{Re}f)_+
  -(\operatorname{Re}f)_-
  +i(\operatorname{Im}f)_+
  -i(\operatorname{Im}f)_-.
$$
The four positive parts belong to $\mathscr D_0$, and
their $L_1$ norms sum to at most $2\|f\|_1$.
Apply the positive estimate to each part at height
$\lambda/4$ and intersect the four projections.
The triangle inequality and trace subadditivity give
the conclusion.
\end{proof}

\subsection{Proof of the decomposition}
\label{subsec:dyadic-decomposition-proof}

Before proving Proposition~\ref{lemma:good-bad-decomposition},
we record two operator inequalities that will be used
to estimate the mixed terms.
For $x\in\cM_+$, $q\in\cP(\cM)$ and $t\geq0$,
\begin{equation}\label{eq:positive-compression-square}
    qxq\leq tq
    \quad\Longrightarrow\quad
    xqx\leq tx.
\end{equation}
Indeed,
$\|x^{1/2}qx^{1/2}\|_\infty
=\|qxq\|_\infty\leq t$, and hence
$$
xqx
=x^{1/2}(x^{1/2}qx^{1/2})x^{1/2}
\leq tx.
$$

\begin{lemma}\label{lemma:row-column-estimate}
Let $(a_k)_{k\geq1}\subset\cM$ and
$(b_k)_{k\geq1}\subset L_2(\cM)$ satisfy
$$
\sum_{k=1}^N a_ka_k^*\leq C1,
\qquad
\sum_{k\geq1}\|b_k\|_2^2<\infty,
$$
where $C\geq0$.
Let $z_{n,k}\in\cM$ satisfy
$\|z_{n,k}\|_\infty\leq1$ for all $n,k\geq1$.
Then
$$
x_n=\sum_{k\geq1}a_kz_{n,k}b_k
$$
converges in $L_2(\cM)$, and
\begin{equation}\label{eq:row-column-square}
    x_n^*x_n
    \leq C\sum_{k\geq1}b_k^*b_k
    \qquad(n\geq1).
\end{equation}
\end{lemma}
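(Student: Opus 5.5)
The plan is to view $x_n$ as a row times a column, with the contraction $z_{n,k}$ inserted in between. Concretely, I use the row operator $R=(a_1\ a_2\ \cdots)$ and the column operator $B=(b_1,b_2,\dots)^{\mathrm{T}}$, with the block-diagonal operator $Z_n=\operatorname{diag}(z_{n,k})_k$ acting between them. Then formally $x_n=RZ_nB$. This gives
$$x_n^*x_n=B^*Z_n^*R^*RZ_nB\leq\|R\|_\infty^2\,B^*Z_n^*Z_nB\leq C\,B^*B=C\sum_k b_k^*b_k,$$
since $\|R\|_\infty^2=\|RR^*\|_\infty=\|\sum_k a_ka_k^*\|_\infty\leq C$ and $Z_n^*Z_n\leq1$. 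The hypothesis is written with a finite $N$; I read it as holding uniformly in $N$, so that $R$ is a bounded row in $\cM\overline\otimes B(\ell_2)$ of norm at most $\sqrt C$.

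Because the $b_k$ lie only in $L_2(\cM)$, I first prove the inequality for finite sums and then pass to the limit. Set $x_n^{(N)}=\sum_{k\leq N}a_kz_{n,k}b_k$ and let $R_N$, $B_N$, $Z_{n,N}$ be the truncations. The identity $x_n^{(N)}=R_NZ_{n,N}B_N$ holds as an operator identity in $L_2(\cM)$: $R_NZ_{n,N}$ is a bounded $1\times N$ matrix over $\cM$, and $B_N$ is an $N\times1$ column with entries in $L_2(\cM)$. The displayed argument then gives
$$(x_n^{(N)})^*x_n^{(N)}\leq C\sum_{k\leq N}b_k^*b_k.$$
The same computation applied to a tail $\sum_{M<k\leq N}$ gives
$$\Bigl\|\sum_{M<k\leq N}a_kz_{n,k}b_k\Bigr\|_2^2\leq C\sum_{M<k\leq N}\|b_k\|_2^2,$$
obtained by taking the trace of the operator inequality. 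So the partial sums are Cauchy in $L_2(\cM)$, and $x_n$ exists as their limit.

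It remains to pass to the limit in the operator inequality. The partial sums $\sum_{k\leq N}b_k^*b_k$ converge in $L_1(\cM)$ to $\sum_k b_k^*b_k$, because the series of $L_1$ norms $\|b_k\|_2^2$ converges. The products $(x_n^{(N)})^*x_n^{(N)}$ converge in $L_1(\cM)$ to $x_n^*x_n$, by Hölder's inequality and the $L_2$ convergence. The positive elements $C\sum_{k\leq N}b_k^*b_k-(x_n^{(N)})^*x_n^{(N)}$ therefore converge in $L_1(\cM)$, and the positive cone of $L_1(\cM)$ is norm closed. The limit is positive, which is \eqref{eq:row-column-square}.

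The only delicate point is making the matrix inequality $B^*Z^*R^*RZB\leq\|R\|^2B^*B$ rigorous when $B$ has unbounded entries. I would handle it at the finite level by writing $x_n^{(N)}=V B_N$ with $V=R_NZ_{n,N}$ a bounded operator from $H^N$ to $H$ satisfying $V^*V\leq C1$. For measurable $B_N$, the inequality $B_N^*V^*VB_N\leq CB_N^*B_N$ then follows by checking quadratic forms on vectors in a common core. Alternatively, one can first cut each $b_k$ by spectral projections of finite trace to make it bounded, and remove the cut afterwards using the same $L_1$-closedness argument.
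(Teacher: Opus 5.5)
Your proposal is correct and follows essentially the same route as the paper. The paper writes $\bigl(\sum_{k\in F}a_kz_{n,k}b_k\bigr)\otimes e_{11}$ as the bounded row $\sum_{k\in F}a_kz_{n,k}\otimes e_{1k}$, whose norm squared is at most $\|\sum_k a_ka_k^*\|_\infty\leq C$, times the column $\sum_{k\in F}b_k\otimes e_{k1}$; it then takes traces on tails for $L_2$ convergence and passes to the limit in $L_1(\cM)$. Your splitting of the row as $R_NZ_{n,N}$ and your explicit closedness-of-$L_1(\cM)_+$ argument for the limit (which the paper leaves implicit) are only cosmetic differences.
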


\begin{proof}
Let $(e_{ij})$ be the standard matrix units in $B(\ell_2)$.
For every finite set $F\subset\bN$,
$$
\left(\sum_{k\in F}a_kz_{n,k}b_k\right)\otimes e_{11}
=
\left(\sum_{k\in F}a_kz_{n,k}\otimes e_{1k}\right)
\left(\sum_{k\in F}b_k\otimes e_{k1}\right).
$$
Since $\|z_{n,k}\|_\infty\leq1$,
$$
\left\|\sum_{k\in F}a_kz_{n,k}\otimes e_{1k}\right\|_\infty^2
=
\left\|\sum_{k\in F}
a_kz_{n,k}z_{n,k}^*a_k^*\right\|_\infty
\leq
\left\|\sum_{k\in F}a_ka_k^*\right\|_\infty
\leq C.
$$
Consequently,
$$
\left|\sum_{k\in F}a_kz_{n,k}b_k\right|^2\otimes e_{11}
\leq
C\left|\sum_{k\in F}b_k\otimes e_{k1}\right|^2
=
C\left(\sum_{k\in F}b_k^*b_k\right)\otimes e_{11},
$$
and hence
$$
\left|\sum_{k\in F}a_kz_{n,k}b_k\right|^2
\leq C\sum_{k\in F}b_k^*b_k.
$$
Therefore we get
$$
\left\|\sum_{k\in F}a_kz_{n,k}b_k\right\|_2^2
\leq C\sum_{k\in F}\|b_k\|_2^2.
$$
Since $\sum_k\|b_k\|_2^2<\infty$, applying this estimate
to the tails proves that the series defining $x_n$
converges in $L_2(\cM)$.
Moreover, $\sum_kb_k^*b_k$ converges in $L_1(\cM)$ and
\( x_n^*x_n
    \leq C\sum_{k\geq1}b_k^*b_k.\)
\end{proof}

We now prove Proposition~\ref{lemma:good-bad-decomposition}.

\begin{proof}[Proof of Proposition~\ref{lemma:good-bad-decomposition}]
The case $f=0$ is immediate.
Write $f_j=\bE_jf$ and choose $j_0\in\bZ$ such that
$f_{j_0-1}\leq t1$. This is possible since
$$
\|\bE_jf\|_\infty
\leq2^{jd}|\supp_{\bR^d}f|\,\|f\|_\infty
\longrightarrow0
\qquad(j\to-\infty).
$$

We first construct the good function $g$ using the
Calder\'on--Zygmund decomposition in
Lemma~\ref{lemma:cuculescu-cz}, and verify the identities
for its dyadic averages needed below.
Let $q_j,p_j$ be the projections defined in
\eqref{eq:cuculescu-projections}.
By \eqref{eq:cuculescu-compressions},
\eqref{eq:cuculescu-trace} and
Lemma~\ref{lemma:cuculescu-cz},
\begin{equation}\label{eq:stopping-projection-bounds}
    \begin{gathered}
        q_jf_jq_j\leq tq_j,\qquad
        p_jf_jq_j=q_jf_jp_j=0,\\
        tp_j\leq p_jf_jp_j\leq2^dtp_j,\qquad
        t\sum_{j\geq j_0}\varphi(p_j)\leq\|f\|_1.
    \end{gathered}
\end{equation}

For $s\geq j_0$, put
$$
g_s=q_sf_sq_s+\sum_{j=j_0}^sp_jf_jp_j,
\qquad
g_{j_0-1}=f_{j_0-1}.
$$
Applying Lemma~\ref{lemma:cuculescu-cz} with $j_1=s$, we have
$$
0\leq g_s\leq2^dt1,\qquad
\|g_s\|_1=\|f\|_1,\qquad
\|g_s\|_2^2\leq2^dt\|f\|_1.
$$
Moreover, since $q_{s-1}=q_s+p_s$ and $p_sf_sq_s=0$, we get
$$
g_s=q_{s-1}f_sq_{s-1}
    +\sum_{j=j_0}^{s-1}p_jf_jp_j,
$$
and hence
$$
\bE_{s-1}g_s
=q_{s-1}f_{s-1}q_{s-1}
 +\sum_{j=j_0}^{s-1}p_jf_jp_j
=g_{s-1}.
$$
Thus $(g_s)$ is an $L_2$-bounded martingale and converges
in $L_2(\cN)$ to $g$ satisfying
$$
0\leq g\leq2^dt1,\qquad
\bE_sg=g_s
\quad(s\geq j_0-1).
$$
Moreover, we have
$$
\varphi(g)
=\varphi(\bE_{j_0-1}g)
=\varphi(f_{j_0-1})
=\|f\|_1.
$$
Note that all $g_s$ are supported on the finite
union of cubes in $\cQ_{j_0-1}$ meeting
$\supp_{\bR^d}f$, thus we have $g\in\mathscr D_0$,
and \eqref{eq:good-input-bounds} follows.

Write
$$
p_j=\sum_{P\in\cQ_j}\dsone_P\otimes p_P,
\qquad
f_Q=\bE_Qf.
$$
For $Q\in\cQ_s$, $s\geq j_0$, let $q_Q$ be the value
of $q_s$ on $Q$, and let $\mathcal P(Q)$ consist of
the ancestors of $Q$ at levels $j_0,\ldots,s$,
including $Q$.
The identity $\bE_sg=g_s$ yields
\begin{equation}\label{eq:good-cube-average}
    \bE_Qg
    =q_Qf_Qq_Q
     +\sum_{P\in\mathcal P(Q)}p_Pf_Pp_P.
\end{equation}
Consequently,
\begin{equation}\label{eq:dyadic-good-bad-decomposition}
    \begin{aligned}
        f_Q-\bE_Qg
        &=(1-q_Q)f_Q(1-q_Q)
          -\sum_{P\in\mathcal P(Q)}p_Pf_Pp_P\\
        &\quad +(1-q_Q)f_Qq_Q
          +q_Qf_Q(1-q_Q).
    \end{aligned}
\end{equation}

We next use the projection $\zeta$ defined in
\eqref{eq:enlarged-cuculescu-projection} to restrict
the terms involving $p_P$ to $(\beta P)^c$.
By Lemma~\ref{lemma:enlarged-cuculescu},
$$
\varphi(1-\zeta)\leq\frac{\beta^d}{t}\|f\|_1,
\qquad
\zeta(x)p_P=p_P\zeta(x)=0
\quad\text{for almost every }x\in\beta P.
$$
Put
$$
\Pi_Q(x)
=\sum_{P\in\mathcal P(Q)}
 \dsone_{(\beta P)^c}(x)p_P.
$$
The projections in this sum are pairwise orthogonal,
so $\Pi_Q$ is a projection.
Since $1-q_Q=\sum_{P\in\mathcal P(Q)}p_P$, the preceding
cancellation property gives
\begin{equation}\label{eq:stopping-localization}
    \zeta(1-q_Q)=\zeta\Pi_Q,
    \qquad
    (1-q_Q)\zeta=\Pi_Q\zeta.
\end{equation}
For any projection $w\leq\zeta$,
\eqref{eq:stopping-localization} gives
$w(1-q_Q)=w\Pi_Q$ and $(1-q_Q)w=\Pi_Qw$.
Therefore, \eqref{eq:dyadic-good-bad-decomposition} yields
\begin{align*}
    w(f_Q-\bE_Qg)w
    &=
    w\left(
        \Pi_Qf_Q\Pi_Q
        -\sum_{P\in\mathcal P(Q)}
         \dsone_{(\beta P)^c}p_Pf_Pp_P
    \right)w\\
    &\quad+w\Pi_Qf_Qq_Qw+wq_Qf_Q\Pi_Qw.
\end{align*}
Multiplying these three terms by $b_{n,Q}$ leads to
the series, we have
$$
B_n
=\sum_Qb_{n,Q}w
 \left(
     \Pi_Qf_Q\Pi_Q
     -\sum_{P\in\mathcal P(Q)}
      \dsone_{(\beta P)^c}p_Pf_Pp_P
 \right)w,
$$
and
$$
B_{\ell,n}=\sum_Qb_{n,Q}w\Pi_Qf_Qq_Qw,
\qquad
B_{r,n}=\sum_Qb_{n,Q}wq_Qf_Q\Pi_Qw.
$$
Here and below, $Q$ ranges over
$\bigcup_{s\geq j_0}\cQ_s$.
For $s<j_0$, we have
$$
\bE_sg
=\bE_s\bE_{j_0-1}g
=\bE_s\bE_{j_0-1}f
=\bE_sf.
$$
Thus $\bE_Q(f-g)=0$ for $Q\in\cQ_s$ with $s<j_0$.
These series give the decomposition
$$
w\mathscr S_n(f)w
=w\mathscr S_n(g)w+B_n+B_{\ell,n}+B_{r,n}.
$$

Fix $\lambda>0$. It remains to find a projection
$w\leq\zeta$ such that the series defining $B_n$
converges in $L_1(\cN)$, those defining $B_{\ell,n}$
and $B_{r,n}$ converge in $L_2(\cN)$, and
$$
\sup_n\max\{
    \|B_n\|_\infty,
    \|B_{\ell,n}\|_\infty,
    \|B_{r,n}\|_\infty
\}
\leq\frac{\lambda}{4},
$$
with
$$
\varphi(1-w)
\leq C_{d,\beta}
\left(
    \frac1t+\frac{\Gamma}{\lambda}
    +\frac{t\Gamma^2}{\lambda^2}
\right)\|f\|_1.
$$

We first estimate $\sum_Qm_Q\Pi_Q$.
Integrating \eqref{eq:exterior-budget} over $P$ gives
$$
\sum_{Q\subseteq P}\int_{(\beta P)^c}m_Q(x)\,dx
\leq\Gamma|P|.
$$
Thus, we obtain
\begin{align*}
    \varphi\left(\sum_Qm_Q\Pi_Q\right)
    &=
    \sum_{j\geq j_0}\sum_{P\in\cQ_j}
    \tau(p_P)
    \sum_{Q\subseteq P}\int_{(\beta P)^c}m_Q(x)\,dx\\
    &\leq
    \Gamma\sum_{j\geq j_0}\sum_{P\in\cQ_j}
    |P|\tau(p_P)\\
    &=\Gamma\sum_{j\geq j_0}\varphi(p_j)
    \leq\frac{\Gamma}{t}\|f\|_1,
\end{align*}
where the last inequality follows from
\eqref{eq:cuculescu-trace}.
Set
$$
w_0
=\zeta\wedge
 \dsone_{[0,\Gamma]}
 \left(\sum_Qm_Q\Pi_Q\right).
$$
For $\Gamma>0$, Chebyshev's inequality yields
$$
\varphi(1-w_0)
\leq\varphi(1-\zeta)
+\frac1\Gamma\varphi\left(\sum_Qm_Q\Pi_Q\right)
\leq\frac{\beta^d+1}{t}\|f\|_1.
$$
If $\Gamma=0$, the positive series is zero and
$w_0=\zeta$. Hence, in both cases, we have
\begin{equation}\label{eq:common-input-projection}
    w_0\left(\sum_Qm_Q\Pi_Q\right)w_0\leq\Gamma w_0,
    \qquad
    \varphi(1-w_0)\leq\frac{\beta^d+1}{t}\|f\|_1.
\end{equation}

We next estimate the positive terms appearing in $B_n$.
By orthogonality of the projections indexed by
$\mathcal P(Q)$, we get
$$
\tau(\Pi_Q(x)f_Q\Pi_Q(x))
=
\sum_{P\in\mathcal P(Q)}
\dsone_{(\beta P)^c}(x)\tau(p_Pf_Q).
$$
Since
$$
\tau(p_Pf_Q)
=\frac1{|Q|}\int_Q\tau(p_Pf(y))\,dy
\qquad(Q\subseteq P),
$$
we obtain
\begin{equation}\label{eq:first-positive-trace}
    \begin{aligned}
        \sum_Q\varphi(m_Q\Pi_Qf_Q\Pi_Q)
        &=
        \sum_{j\geq j_0}\sum_{P\in\cQ_j}
        \int_P\tau(p_Pf(y))
        \left(
            \sum_{\substack{Q\subseteq P\\y\in Q}}
            \frac1{|Q|}
            \int_{(\beta P)^c}m_Q(x)\,dx
        \right)dy\\
        &\leq
        \Gamma\sum_{j\geq j_0}\varphi(p_jf)
        \leq\Gamma\|f\|_1,
    \end{aligned}
\end{equation}
where we used \eqref{eq:exterior-budget} and
$\sum_{j\geq j_0}p_j\leq1$.
Similarly,
\begin{equation}\label{eq:second-positive-trace}
    \sum_Q
    \varphi\left(
        m_Q\sum_{P\in\mathcal P(Q)}
        \dsone_{(\beta P)^c}p_Pf_Pp_P
    \right)
    \leq\Gamma\sum_{j\geq j_0}\varphi(p_jf)
    \leq\Gamma\|f\|_1.
\end{equation}
On the other hand, $q_Qf_Qq_Q\leq tq_Q$ and
\eqref{eq:positive-compression-square} give
$f_Qq_Qf_Q\leq tf_Q$. Consequently,
\begin{equation}\label{eq:mixed-L2-sum}
    \begin{aligned}
        \sum_Q\|\sqrt{m_Q}\Pi_Qf_Qq_Qw_0\|_2^2
        \leq
        \sum_Q\varphi(m_Q\Pi_Qf_Qq_Qf_Q\Pi_Q)
        \leq t\Gamma\|f\|_1.
    \end{aligned}
\end{equation}
We now define $h\in L_1(\cN)_+$ by
$$
\begin{aligned}
    h
    =\frac4\lambda
      \sum_Qm_Qw_0
      \left(
          \Pi_Qf_Q\Pi_Q
          +\sum_{P\in\mathcal P(Q)}
           \dsone_{(\beta P)^c}p_Pf_Pp_P
      \right)w_0+
      \frac{16\Gamma}{\lambda^2}
      \sum_Qm_Qw_0q_Qf_Q\Pi_Qf_Qq_Qw_0.
\end{aligned}
$$
By \eqref{eq:first-positive-trace},
\eqref{eq:second-positive-trace} and
\eqref{eq:mixed-L2-sum}, we get
$$
\varphi(h)
\leq
\left(
    \frac{8\Gamma}{\lambda}
    +\frac{16t\Gamma^2}{\lambda^2}
\right)\|f\|_1.
$$
Take
$$
w=w_0\wedge\dsone_{[0,1]}(h).
$$
Then $w\leq\zeta$, $whw\leq w$, and
\begin{align*}
    \varphi(1-w)
    \leq\varphi(1-w_0)+\varphi(h)\leq
    \left(
        \frac{\beta^d+1}{t}
        +\frac{8\Gamma}{\lambda}
        +\frac{16t\Gamma^2}{\lambda^2}
    \right)\|f\|_1.
\end{align*}
This proves \eqref{eq:bad-projection-bound}.

Since $|b_{n,Q}|\leq m_Q$,
\eqref{eq:first-positive-trace} and
\eqref{eq:second-positive-trace} give absolute convergence
of the series defining $B_n$ in $L_1(\cN)$.
For $Y\geq0$ and $|z|\leq1$,
$$
\begin{pmatrix}
    Y&zY\\
    \overline zY&Y
\end{pmatrix}\geq0.
$$
Applying this to the two positive terms in $B_n$ with
coefficients $b_{n,Q}/m_Q$ and $-b_{n,Q}/m_Q$, and
summing, gives
\begin{equation}\label{eq:error-diagonal-block}
    \begin{pmatrix}
        \dfrac{\lambda}{4}whw&B_n\\
        B_n^*&\dfrac{\lambda}{4}whw
    \end{pmatrix}\geq0.
\end{equation}
As $whw\leq w$, Lemma~\ref{lemma:positive-block} yields
$\|B_n\|_\infty\leq\lambda/4$.

Furthermore, by \eqref{eq:common-input-projection},
\eqref{eq:mixed-L2-sum} and the fact $w\leq w_0$, we obtain
$$
\sum_Q
(\sqrt{m_Q}\,w\Pi_Q)
(\sqrt{m_Q}\,w\Pi_Q)^*
\leq\Gamma w,
\qquad
\sum_Q\|\sqrt{m_Q}\Pi_Qf_Qq_Qw\|_2^2
\leq t\Gamma\|f\|_1.
$$
Apply Lemma~\ref{lemma:row-column-estimate} with
$$
a_Q=\sqrt{m_Q}\,w\Pi_Q,\qquad
b_Q=\sqrt{m_Q}\,\Pi_Qf_Qq_Qw,\qquad
z_{n,Q}=b_{n,Q}/m_Q.
$$
Applying the same lemma with $\overline{z_{n,Q}}$ and
taking adjoints proves that the series defining
$B_{\ell,n}$ and $B_{r,n}$ both converge in $L_2(\cN)$ and satisfy
\begin{equation}\label{eq:common-column-square}
    \begin{aligned}
        B_{\ell,n}^*B_{\ell,n},\ B_{r,n}B_{r,n}^*
        \leq
        \Gamma\sum_Qm_Qwq_Qf_Q\Pi_Qf_Qq_Qw
        \leq\frac{\lambda^2}{16}whw.
    \end{aligned}
\end{equation}
Thus
$\|B_{\ell,n}\|_\infty,\|B_{r,n}\|_\infty
\leq\lambda/4$, proving \eqref{eq:bad-output-bounds}.
\end{proof}

\section{A simultaneous dyadic decomposition}
\label{sec:kernel-decomposition}

In this section, we construct a simultaneous dyadic
decomposition of a family of kernels;
see Proposition~\ref{prop:dyadic-majorants}.
We then express the convolution operators in terms
of dyadic averages and verify the coefficient bound
required in Theorem~\ref{prop:exterior-endpoint};
see Proposition~\ref{prop:kernel-pairing}.

\subsection{Dyadic expansion and coefficient estimates}
\label{subsec:kernel-decomposition}

In this subsection, we construct a dyadic expansion for a countable
family $(K_n)_{n\geq1}\subset L_{1,\mathrm{loc}}(\bR^d\setminus\{0\})$
of complex-valued kernels.

We use the dyadic cubes and averages introduced in
Section~\ref{subsec:cuculescu}.
For each $j\in\bZ$, let
\[
    V_j=[-2^{1-j},2^{1-j})^d
        \setminus[-2^{-j},2^{-j})^d,
    \qquad
    \mathcal B_j=\{B\in\cQ_j:B\subseteq V_j\}.
\]
The $4^d-2^d$ cubes in $\mathcal B_j$ partition $V_j$,
and the cubes in $\bigcup_j\mathcal B_j$ partition
$\bR^d\setminus\{0\}$.
Let $\cD$ consist of these cubes and all their dyadic
descendants. Each $Q\in\cD$ is therefore contained in
a unique cube $B\in\bigcup_j\mathcal B_j$.

On each $B\in\bigcup_j\mathcal B_j$, we define the
coefficients $a_{n,Q}$ recursively, starting with $B$
and proceeding to its dyadic descendants.
Once the coefficients on the strict ancestors of $Q$
have been chosen, put
\[
    F_{n,Q}
    =K_n-\sum_{Q\subsetneq P\subseteq B}a_{n,P},
\]
where the sum is empty for $Q=B$.
Set
\[
    d_{n,Q}=
    \begin{cases}
       \bE_QF_{n,Q},
        &\displaystyle
         \int_Q|F_{n,Q}-\bE_QF_{n,Q}|
         \leq\frac12\int_Q|F_{n,Q}|,\\[4pt]
        0,&\text{otherwise},
    \end{cases}
\]
and define
\begin{equation}\label{eq:mean-selection}
    (a_{n,Q})_n=
    \begin{cases}
        (d_{n,Q})_n,
        &\displaystyle
         \int_Q\sup_n|F_{n,Q}-d_{n,Q}|
         \leq\frac12\int_Q\sup_n|F_{n,Q}|,\\[4pt]
        (0)_n,&\text{otherwise}.
    \end{cases}
\end{equation}

We impose the following assumptions on $(K_n)_{n\geq1}$.
The kernels are uniformly bounded and Lipschitz, in the sense that
$$
\sup_n\|K_n\|_\infty
+\sup_n\sup_{x\ne y}
\frac{|K_n(x)-K_n(y)|}{|x-y|}<\infty.
$$
They satisfy the joint H\"ormander condition
\eqref{eq:dyadic-hormander},
$$
C_D:=\sup_{k\in\bZ}\sup_{y\in Q_0^{(k)}}
\int_{(AQ_0^{(k)})^c}
\sup_n|K_n(x-y)-K_n(x)|\,dx<\infty
$$
for some fixed $A\geq1$. In addition, we require a uniform
bound on the integrals of $\sup_n|K_n|$ over $V_j$:
$$
C_V:=\sup_{j\in\bZ}\int_{V_j}\sup_n|K_n(x)|\,dx<\infty.
$$

\begin{proposition}\label{prop:dyadic-majorants}
The coefficients defined above have the following properties.

\smallskip
\noindent\textup{(i)}
For every $B\in\bigcup_j\mathcal B_j$,
\begin{equation}\label{eq:local-coefficient-mass}
    \sum_{Q\subseteq B}|Q||a_{n,Q}|
    \leq2\int_B|K_n|,
    \qquad
    \sum_{Q\subseteq B}|Q|\sup_n|a_{n,Q}|
    \leq2\int_B\sup_n|K_n|.
\end{equation}
Furthermore, for every $P\in\cD$,
\begin{equation}\label{eq:tree-tail}
    \sum_{Q\subsetneq P}|Q|\sup_n|a_{n,Q}|
    \leq12\int_P\sup_n|K_n-\bE_PK_n|.
\end{equation}

\smallskip
\noindent\textup{(ii)}
For every $n\geq1$,
\begin{equation}\label{eq:exact-kernel-series}
    K_n=\sum_{Q\in\cD}a_{n,Q}\dsone_Q
    \quad\text{almost everywhere on }\bR^d\setminus\{0\}.
\end{equation}
 If $K_n\in L_1(\bR^d)$, then
\begin{equation}\label{eq:dyadic-majorants}
    |K_n|\leq\sum_{Q\in\cD}|a_{n,Q}|\dsone_Q
    \quad\text{a.e.},
    \qquad
    \sum_{Q\in\cD}|Q||a_{n,Q}|\leq2\|K_n\|_1.
\end{equation}

\smallskip
\noindent\textup{(iii)}
For $Q\in\cQ$, put $\rho_Q=|c_Q|_\infty+\ell(Q)/2$, and define
$$
\mathcal E_j
=\{Q\in\cQ:\ell(Q)\leq2^{-j},\ \rho_Q>2^{-j}\}.
$$
Then, for every $j\in\bZ$,
\begin{equation}\label{eq:dyadic-displacement}
    \sum_{Q\in\cD\cap\mathcal E_j}|Q|\sup_n|a_{n,Q}|
    \leq C_{d,A}(C_V+C_D).
\end{equation}
\end{proposition}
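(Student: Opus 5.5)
The plan is to track, for each cube $Q\in\cD$, the ``remaining mass'' $\Phi(Q)=\int_Q\sup_n|F_{n,Q}|$ and its per-$n$ analogue $\Phi_n(Q)=\int_Q|F_{n,Q}|$. Each part of the proposition will follow from a one-step inequality at a single cube, which is then iterated along the dyadic tree. By construction the children $Q'$ of $Q$ satisfy $F_{n,Q'}=F_{n,Q}-a_{n,Q}$ on $Q'$, so $\sum_{Q'}\Phi(Q')=\int_Q\sup_n|F_{n,Q}-a_{n,Q}|$.

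\emph{Step 1: part (i), first bounds.} Since $a_{n,Q}\in\{\bE_QF_{n,Q},0\}$, we always have $|Q|\sup_n|a_{n,Q}|\le\Phi(Q)$. If the vector $(a_{n,Q})_n$ is selected in \eqref{eq:mean-selection}, then $\sum_{Q'}\Phi(Q')\le\frac12\Phi(Q)$. If it is not selected, then $a_{n,Q}=0$ for all $n$ and the sum over children equals $\Phi(Q)$. In both cases
\[
|Q|\sup_n|a_{n,Q}|\le2\Bigl(\Phi(Q)-\sum_{Q'}\Phi(Q')\Bigr).
\]
Telescoping over all generations below $B$ and discarding the nonnegative remainder gives the second bound in \eqref{eq:local-coefficient-mass}, because $\Phi(B)=\int_B\sup_n|K_n|$. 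The per-$n$ bound is proved the same way with $\Phi_n$, using the rule defining $d_{n,Q}$: when $d_{n,Q}\ne0$ we have $\int_Q|F_{n,Q}-d_{n,Q}|\le\frac12\Phi_n(Q)$, and otherwise nothing changes.

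\emph{Step 2: the tree tail \eqref{eq:tree-tail}.} Applying Step 1 below each child of $P$ gives $\sum_{Q\subsetneq P}|Q|\sup_n|a_{n,Q}|\le2\int_P\sup_n|F_{n,P}-a_{n,P}|$. The key observation is that on $P$ one has $F_{n,P}=K_n-c_n$ with $c_n$ constant, so $F_{n,P}-\bE_PF_{n,P}=K_n-\bE_PK_n$. Put $O(P)=\int_P\sup_n|K_n-\bE_PK_n|$.

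For indices $n$ with $d_{n,P}=0$, the rejection rule gives $|P||\bE_PF_{n,P}|\le\int_P|F_{n,P}|<2\int_P|K_n-\bE_PK_n|$. Hence, pointwise on $P$,
\[
\sup_n|F_{n,P}-d_{n,P}|\le\sup_n|K_n-\bE_PK_n|+\frac{2}{|P|}O(P),
\]
and integrating yields $\int_P\sup_n|F_{n,P}-d_{n,P}|\le3O(P)$. If the vector is selected, then $a_{n,P}=d_{n,P}$ and the bound $3O(P)$ applies directly. If it is rejected, then $a_{n,P}=0$ and $\Phi(P)<2\int_P\sup_n|F_{n,P}-d_{n,P}|\le6O(P)$. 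Either way we obtain $12\,O(P)$.

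\emph{Step 3: part (ii).} By Step 2, the total remainder $\sum\Phi(Q)$ over descendants of $B$ at generation $k$ is at most $6\sum O(P)$ over their parents. The Lipschitz bound gives $O(P)\lesssim\ell(P)|P|$, so this remainder is $\lesssim2^{-k}|B|$. Thus the partial sums $\sum_{Q\supseteq\,\cdot\,,\,\ell(Q)\ge2^{-k}}a_{n,Q}\dsone_Q$ converge to $K_n$ in $L_1(B)$. They also converge absolutely a.e., since $\sum|Q||a_{n,Q}|<\infty$ by part (i). This proves \eqref{eq:exact-kernel-series}, and \eqref{eq:dyadic-majorants} follows by taking absolute values and summing part (i) over all $B$.

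\emph{Step 4: part (iii), the main obstacle.} A cube $Q\in\cD\cap\mathcal E_j$ lies in some $V_i$ with $\rho_Q\le2^{1-i}$, so $i\le j$ and $\ell(Q)\le2^{-j}$.
\begin{itemize}
\item For $i=j$, part (i) bounds the contribution by $2\int_{V_j}\sup_n|K_n|\le2C_V$.
\item For $i<j$, each such $Q$ is contained in a unique ancestor $P\subseteq V_i$ with $\ell(P)=2^{-j}$. By Step 2 and part (i), the contribution is at most $\sum_P\bigl(|P|\sup_n|a_{n,P}|+12\,O(P)\bigr)$. The finitely many $i$ with $2^{-i}\le A2^{-j}$ are handled by $C_V$.
\end{itemize}
For the remaining $P$ I would write $O(P)\le|P|^{-1}\int_P\int_P\sup_n|K_n(x)-K_n(z)|\,dz\,dx$ and substitute $z=x-y$ with $|y|_\infty\le2^{-j}$. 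Summing over $P$ then gives the average over $y\in Q_0^{(j-1)}$ of $\int_{(AQ_0^{(j-1)})^c}\sup_n|K_n(x-y)-K_n(x)|\,dx$, up to a bounded overlap, which is at most $C_D$. The terms $|P|\sup_n|a_{n,P}|$ are the delicate point. They are not individually summable from $C_V$ alone, so I would bound them by the remainder $\Phi(P)$ and, using Step 2 at the parent level, reduce them to $O(\widehat P)$ for the parents $\widehat P$. The expected difficulty is the bookkeeping of this geometric reduction: matching the exterior region in the sum with the sets $(AQ_0^{(k)})^c$ in $C_D$, and absorbing the finitely many near-diagonal annuli into $C_V$.
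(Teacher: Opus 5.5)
Your proposal is correct and follows the paper's proof essentially step by step: the same mass-decrease inequality with telescoping, the same outgoing-oscillation bound $\int_P\sup_n|F_{n,P}-a_{n,P}|\le 6\int_P\sup_n|K_n-\bE_PK_n|$, Lipschitz-driven $L_1$ convergence for (ii), and the same change of variables against the dyadic H\"ormander condition for (iii). The only deviation is cosmetic: in (iii) the paper partitions into cubes of side $2^{1-j}$, so every $Q\in\cD\cap\mathcal E_j$ is a strict descendant and the tree-tail bound applies at once, which is exactly what your ``reduce to the parents $\widehat P$'' step recovers.
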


\begin{proof}
We first obtain the coefficient estimates needed to justify
the expansion. By the definition of $a_{n,Q}$,
\begin{align}
    |Q||a_{n,Q}|
    &\leq2\left(
        \int_Q|F_{n,Q}|
        -\int_Q|F_{n,Q}-a_{n,Q}|
    \right),\label{eq:individual-mass-decrease}\\
    |Q|\sup_n|a_{n,Q}|
    &\leq2\left(
        \int_Q\sup_n|F_{n,Q}|
        -\int_Q\sup_n|F_{n,Q}-a_{n,Q}|
    \right).\label{eq:common-mass-decrease}
\end{align}
Fix $B\in\bigcup_j\mathcal B_j$, and denote its $k$th generation
of dyadic subcubes by
$$
\mathcal G_k(B)
=\{Q\subseteq B:\ell(Q)=2^{-k}\ell(B)\}.
$$
For each child $Q'$ of $Q$, the recursive definition gives
$F_{n,Q'}=F_{n,Q}-a_{n,Q}$. Indeed,
$$
F_{n,Q'}
=K_n-\sum_{Q'\subsetneq P\subseteq B}a_{n,P}
=K_n-a_{n,Q}-\sum_{Q\subsetneq P\subseteq B}a_{n,P}
=F_{n,Q}-a_{n,Q}.
$$
Consequently,
$$
\sum_{\substack{Q'\text{ child}\\\text{of }Q}}
\int_{Q'}\sup_n|F_{n,Q'}|
=\int_Q\sup_n|F_{n,Q}-a_{n,Q}|.
$$
Summing \eqref{eq:common-mass-decrease} and \eqref{eq:individual-mass-decrease} over generations
$0,\ldots,m$, all intermediate integrals cancel, and we obtain
\begin{align*}
    \sum_{k=0}^m\sum_{Q\in\mathcal G_k(B)}
    |Q|\sup_n|a_{n,Q}|
    &\leq2\left(
        \int_B\sup_n|K_n|
        -\sum_{Q\in\mathcal G_m(B)}
         \int_Q\sup_n|F_{n,Q}-a_{n,Q}|
    \right)\\
    &\leq2\int_B\sup_n|K_n|,
\end{align*}
and
\[\sum_{k=0}^m\sum_{Q\in\mathcal G_k(B)}
    |Q||a_{n,Q}|
  \leq2\int_B|K_n|.\]
Letting $m\to\infty$ proves the estimates in
\eqref{eq:local-coefficient-mass}. 
Repeating the same summation over the strict descendants of $P$
and noting that $F_{n,Q}=F_{n,P}-a_{n,P}$ for
$Q\in\mathcal G_1(P)$, we obtain
\begin{equation}\label{eq:descendant-residual}
    \sum_{Q\subsetneq P}|Q|\sup_n|a_{n,Q}|
    \leq2\int_P\sup_n|F_{n,P}-a_{n,P}|.
\end{equation}
We claim that
\begin{equation}\label{eq:outgoing-oscillation}
    \int_P\sup_n|F_{n,P}-a_{n,P}|
    \leq6\int_P\sup_n|K_n-\bE_PK_n|.
\end{equation}
Together with \eqref{eq:descendant-residual}, this yields
$$
\sum_{Q\subsetneq P}|Q|\sup_n|a_{n,Q}|
\leq12\int_P\sup_n|K_n-\bE_PK_n|,
$$
which is \eqref{eq:tree-tail}.

To prove the claim, note that
$
F_{n,P}-\bE_PF_{n,P}
=K_n-\bE_PK_n.$
If $d_{n,P}=\bE_PF_{n,P}$, then
$|F_{n,P}-d_{n,P}|=|K_n-\bE_PK_n|$.
Otherwise, $d_{n,P}=0$ and
$$
\int_P|F_{n,P}|
<2\int_P|F_{n,P}-\bE_PF_{n,P}|
=2\int_P|K_n-\bE_PK_n|.
$$
Consequently,
$$
|\bE_PF_{n,P}|
\leq\bE_P|F_{n,P}|
\leq2\bE_P|K_n-\bE_PK_n|,
$$
so in both cases,
$$
|F_{n,P}-d_{n,P}|
\leq|K_n-\bE_PK_n|+2\bE_P|K_n-\bE_PK_n|.
$$
Taking the supremum over $n$ and integrating over $P$ gives
$$
\int_P\sup_n|F_{n,P}-d_{n,P}|
\leq3\int_P\sup_n|K_n-\bE_PK_n|.
$$
By \eqref{eq:mean-selection}, either $a_{n,P}=d_{n,P}$
for all $n$, or $a_{n,P}=0$ for all $n$ and
$$
\int_P\sup_n|F_{n,P}|
<2\int_P\sup_n|F_{n,P}-d_{n,P}|.
$$
Thus
$$
\int_P\sup_n|F_{n,P}-a_{n,P}|
\leq2\int_P\sup_n|F_{n,P}-d_{n,P}|
\leq6\int_P\sup_n|K_n-\bE_PK_n|,
$$
proving the claim.

We now prove \eqref{eq:exact-kernel-series}. Fix
$B\in\bigcup_j\mathcal B_j$. For $P\in\mathcal G_m(B)$
and $x\in P$,
$$
K_n(x)-\sum_{k=0}^m\sum_{Q\in\mathcal G_k(B)}
a_{n,Q}\dsone_Q(x)
=K_n(x)-\sum_{P\subseteq Q\subseteq B}a_{n,Q}
=F_{n,P}(x)-a_{n,P}.
$$
Let $C_{\mathrm{Lip}}$ be a common Lipschitz constant. Since
$$
|K_n(x)-\bE_PK_n|
\leq\frac1{|P|}\int_P|K_n(x)-K_n(y)|\,dy
\leq C_{\mathrm{Lip}}\sqrt d\,\ell(P),
\qquad x\in P,
$$
we obtain from \eqref{eq:outgoing-oscillation} that
\begin{align*}
 \int_B\sup_n\left|K_n-\sum_{k=0}^m\sum_{Q\in\mathcal G_k(B)}
    a_{n,Q}\dsone_Q\right|
    &=\sum_{P\in\mathcal G_m(B)}
    \int_P\sup_n|F_{n,P}-a_{n,P}|\\
    &\leq6\sum_{P\in\mathcal G_m(B)}
    \int_P\sup_n|K_n-\bE_PK_n|\\
    &\leq6C_{\mathrm{Lip}}\sqrt d\,
    2^{-m}\ell(B)|B|\longrightarrow0.
\end{align*}
Moreover, by \eqref{eq:local-coefficient-mass}, we have
$$
\int_B\sum_{Q\subseteq B}\sup_n|a_{n,Q}|\dsone_Q
=\sum_{Q\subseteq B}|Q|\sup_n|a_{n,Q}|
\leq2\int_B\sup_n|K_n|<\infty.
$$
Thus the series converge absolutely almost everywhere and in
$L_1(B)$, and
$$
\left\|K_n-\sum_{Q\subseteq B}a_{n,Q}\dsone_Q\right\|_{L_1(B)}
=\lim_{m\to\infty}
\left\|K_n-\sum_{k=0}^m\sum_{Q\in\mathcal G_k(B)}
a_{n,Q}\dsone_Q\right\|_{L_1(B)}
=0.
$$
Since $\bigcup_j\mathcal B_j$ is a countable partition of
$\bR^d\setminus\{0\}$, this proves \eqref{eq:exact-kernel-series}.

If $K_n\in L_1(\bR^d)$, then
$$
|K_n|\leq\sum_{Q\in\cD}|a_{n,Q}|\dsone_Q
\quad\text{a.e.},
$$
and summing \eqref{eq:local-coefficient-mass} over $B$,
$$
\sum_{Q\in\cD}|Q||a_{n,Q}|
=\sum_{B\in\bigcup_j\mathcal B_j}
\sum_{Q\subseteq B}|Q||a_{n,Q}|
\leq2\sum_{B\in\bigcup_j\mathcal B_j}\int_B|K_n|
=2\|K_n\|_1.
$$
This proves \eqref{eq:dyadic-majorants}.

It remains to prove \eqref{eq:dyadic-displacement}.
Fix $j\in\bZ$ and choose $J=\lceil\log_2(2A)\rceil$.
Every $Q\in\cD\cap\mathcal E_j$ is contained in a cube
$B\in\mathcal B_r$ with $r\leq j$. Indeed, if $r>j$, then
$B\subseteq[-2^{-j},2^{-j})^d$, contrary to $\rho_Q>2^{-j}$.
For $j-J<r\leq j$, by \eqref{eq:local-coefficient-mass}, we have
$$
\sum_{r=j-J+1}^j\sum_{B\in\mathcal B_r}
\sum_{Q\subseteq B}|Q|\sup_n|a_{n,Q}|
\leq2\sum_{r=j-J+1}^j\int_{V_r}\sup_n|K_n|
\leq2JC_V.
$$
For $r\leq j-J$ and $B\in\mathcal B_r$, write
$$
B=\bigsqcup_{\substack{P\in\cQ_{j-1}\\P\subseteq B}}P.
$$
Since $2^{-r}\geq2^{J-j}\geq A2^{1-j}$, each such $P$ satisfies
$$
P\subseteq B\subseteq V_r
\subseteq\bigl([-2^{-r},2^{-r})^d\bigr)^c
\subseteq(AQ_0^{(j-2)})^c.
$$
If $Q\in\cD\cap\mathcal E_j$ and $Q\subseteq B$, then
$\ell(Q)\leq2^{-j}<2^{1-j}=\ell(P)$.
By dyadic nesting, $Q\subsetneq P$ for a unique $P$ in
this partition. Summing over all such $P$ and applying
\eqref{eq:tree-tail}, we obtain
\begin{align*}
    \sum_{r\leq j-J}\sum_{B\in\mathcal B_r}
    \sum_{\substack{Q\in\cD\cap\mathcal E_j\\Q\subseteq B}}
    |Q|\sup_n|a_{n,Q}|
    \leq\sum_P\sum_{Q\subsetneq P}|Q|\sup_n|a_{n,Q}|
    \leq12\sum_P\int_P\sup_n|K_n-\bE_PK_n|.
\end{align*}

Write $D(z,y)=\sup_n|K_n(z-y)-K_n(z)|$.
We have
$$
\sup_n|K_n(z)-\bE_PK_n|
\leq\frac1{|P|}\int_PD(z,z-w)\,dw,
\qquad z\in P.
$$
Since $|P|=2^{(1-j)d}$, changing variables $y=z-w$ yields
\begin{align*}
    \sum_P\int_P\sup_n|K_n-\bE_PK_n|
    &\leq2^{(j-1)d}
      \sum_P\int_P\int_PD(z,z-w)\,dw\,dz\\
    &=2^{(j-1)d}
      \int_{(-2^{1-j},2^{1-j})^d}\int_{\bR^d}
      D(z,y)\sum_P\dsone_P(z)\dsone_P(z-y)\,dz\,dy\\
    &\leq2^{(j-1)d}
      \int_{(-2^{1-j},2^{1-j})^d}
      \int_{(AQ_0^{(j-2)})^c}D(z,y)\,dz\,dy\\
    &\leq2^dC_D.
\end{align*}
Here disjointness gives
$\sum_P\dsone_P(z)\dsone_P(z-y)\leq1$,
and the last inequality follows from
\eqref{eq:dyadic-hormander}, since
$(-2^{1-j},2^{1-j})^d\subset Q_0^{(j-2)}$.
Combining the two contributions, we get
$$
\sum_{Q\in\cD\cap\mathcal E_j}|Q|\sup_n|a_{n,Q}|
\leq2JC_V+12\cdot2^dC_D
\leq C_{d,A}(C_V+C_D),
$$
which proves \eqref{eq:dyadic-displacement}.
\end{proof}

\begin{remark}\label{remark:dyadic-majorants}
The conclusions of Proposition~\ref{prop:dyadic-majorants}
also hold for a finite family
$(K_n)_{n=1}^N\subset L_{1,\mathrm{loc}}(\bR^d\setminus\{0\})$,
under the same assumptions involving $C_D$ and $C_V$, without
boundedness or Lipschitz regularity. Indeed, on each
$B\in\bigcup_j\mathcal B_j$,
$$
\int_B\max_{1\leq n\leq N}|K_n|
\leq\sum_{n=1}^N\int_B|K_n|<\infty,
$$
so the recursion and the coefficient estimates remain valid.
For $B\in\mathcal B_j$, by \eqref{eq:outgoing-oscillation},
\begin{align*}
    \int_B\max_{1\leq n\leq N}
    \left|K_n-\sum_{\ell=0}^k
    \sum_{Q\in\mathcal G_\ell(B)}a_{n,Q}\dsone_Q\right|\leq
    6\sum_{n=1}^N
    \|K_n-\bE_{j+k}(\dsone_BK_n)\|_{L_1(B)}
    \longrightarrow0.
\end{align*}

The identity
$$
a_{n,Q}\dsone_Q
=\bigl(|Q|a_{n,Q}\bigr)\frac{\dsone_Q}{|Q|}
$$
explains the appearance of $|Q|a_{n,Q}$ when the expansion
is written using normalized cube indicators.
The estimates hold for the same coefficients at every scale.
Global integrability is needed only for the global coefficient
bound in \eqref{eq:dyadic-majorants}; the other conclusions
allow $\sum_Q|Q||a_{n,Q}|=\infty$.

Related dyadic estimates in terms of local oscillation appear
in Lerner~\cite{lerner10pointwise} and in the Banach space valued
decomposition of H\"anninen--Hyt\"onen
\cite[Theorem~2.16]{hanninenhytonen14oscillation}.
Here the two selection conditions give both the individual
estimate in \eqref{eq:local-coefficient-mass} and the common
estimate \eqref{eq:tree-tail}, while retaining the exact
expansion for complex-valued kernels.

Finally, if $\sup_n\|K_n\|_1\leq A_1$, then the assumption
$C_V<\infty$ follows from \eqref{eq:dyadic-hormander}.
To see this, let $Q_a=[-a/2,a/2)^d$ have dyadic side length $a$.
Averaging over $y\in Q_a$ gives
$$
|K_n(z)|
\leq a^{-d}A_1
+\frac1{|Q_a|}\int_{Q_a}
\sup_m|K_m(z-y)-K_m(z)|\,dy.
$$
Taking the supremum over $n$ and integrating over a measurable
set $U\subseteq(AQ_a)^c$, we obtain
$$
\int_U\sup_n|K_n|
\leq |U|a^{-d}A_1+C_D.
$$
For each $j$, choose $a$ such that
$2^{-j}/A<a\leq2^{1-j}/A$ and take $U=V_j$.
Then $V_j\subseteq(AQ_a)^c$ and $|V_j|a^{-d}\leq C_{d,A}$,
so
$
C_V\leq C_{d,A}A_1+C_D.
$
Thus \eqref{eq:dyadic-majorants} and
\eqref{eq:dyadic-displacement} give the two coefficient
bounds required for integrable kernels.
\end{remark}

\subsection{Dyadic representations of convolution operators}
\label{subsec:kernel-pairing}

We use the kernel expansion from
Proposition~\ref{prop:dyadic-majorants} to express $K_n*f$
in terms of the dyadic averages $\bE_Qf$.
We first establish the corresponding identity for convolution
with a cube indicator.

Write $\widehat Q$ for the dyadic parent of $Q$.
For an axis-parallel cube $R$ of dyadic side length and
$Q\in\cQ$, put
$$
X_Q^R
=\{x\in\bR^d:Q\subseteq x-R,\
                 \widehat Q\not\subseteq x-R\}.
$$
Thus $x\in X_Q^R$ precisely when $Q$ is a maximal standard
dyadic cube contained in $x-R$.

For each $x$, these maximal cubes are pairwise disjoint
and cover the interior of $x-R$ up to a null set.
Indeed, every interior point outside the dyadic boundaries
belongs to a sufficiently small dyadic cube contained in
$x-R$, and only finitely many of its ancestors can be
contained in this bounded set. Consequently,
\begin{equation}\label{eq:exact-cube-pairing}
    \dsone_R(x-y)
    =\sum_{Q\in\cQ}
      \dsone_{X_Q^R}(x)\dsone_Q(y)
    \quad\text{for almost every }(x,y).
\end{equation}
For any Bochner integrable function $f$, we have
\begin{equation}\label{eq:cube-average-representation}
    (\dsone_R*f)(x)
    =\sum_{Q\in\cQ}
      |Q|\dsone_{X_Q^R}(x)\bE_Qf
    \quad\text{for almost every }x.
\end{equation}

If $\ell(Q)\leq\ell(R)$, the sets of translations containing
$Q$ and $\widehat Q$ have measures
$(\ell(R)-\ell(Q))^d$ and
$(\ell(R)-2\ell(Q))_+^d$, respectively. Hence
\begin{equation}\label{eq:cube-pairing-volume}
    \begin{aligned}
        |X_Q^R|
        =(\ell(R)-\ell(Q))^d
          -(\ell(R)-2\ell(Q))_+^d\leq d\,\ell(Q)\ell(R)^{d-1}.
    \end{aligned}
\end{equation}
If $\ell(Q)>\ell(R)$, then $X_Q^R=\varnothing$.

Let $(K_n)_{n\geq1}$ satisfy the hypotheses of
Proposition~\ref{prop:dyadic-majorants}, and let
$(a_{n,R})$ be the coefficients constructed there.
For $Q\in\cQ$, define
$$
b_{n,Q}(x)
=|Q|\sum_{R\in\cD}a_{n,R}\dsone_{X_Q^R}(x),
\qquad
m_Q(x)
=|Q|\sum_{R\in\cD}\sup_n|a_{n,R}|
                         \dsone_{X_Q^R}(x).
$$
\begin{lemma}\label{lemma:pairing-coefficients}
For every $Q\in\cQ$, the series defining $b_{n,Q}$ and $m_Q$
converge absolutely in $L_1(\bR^d)$ and uniformly on $\bR^d$.
Moreover,
\begin{equation}\label{eq:pairing-coefficient-bounds}
    |b_{n,Q}|\leq m_Q,\qquad
    \|m_Q\|_\infty\leq C_{d,A}(C_V+C_D),\qquad
    \|m_Q\|_1\leq C_{d,A}(C_V+C_D)|Q|.
\end{equation}
\end{lemma}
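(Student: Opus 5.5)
The pointwise bound $|b_{n,Q}|\leq m_Q$ is immediate termwise once absolute convergence is known. So the whole task reduces to the two norm bounds for $m_Q$. The uniform convergence will follow from the sup bound applied to tails, since all terms are nonnegative. I will split the sum over $R\in\cD$ according to the size of $R$ relative to $Q$ and the position of $R$ relative to the origin. Only $R$ with $\ell(R)\geq\ell(Q)$ contribute, since otherwise $X_Q^R=\varnothing$.

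\emph{$L_1$ bound.} By \eqref{eq:cube-pairing-volume},
\[
\|m_Q\|_1\leq|Q|\sum_{R\in\cD,\ \ell(R)\geq\ell(Q)}\sup_n|a_{n,R}|\,d\,\ell(Q)\ell(R)^{d-1}
=d\,|Q|\sum_{\ell(R)\geq\ell(Q)}\frac{\ell(Q)}{\ell(R)}\,|R|\sup_n|a_{n,R}|.
\]
Group $R$ by scale $\ell(R)=2^{-k}$ with $2^{-k}\geq\ell(Q)$. At a fixed scale, each $R\in\cD$ lies in a unique $B\in\mathcal B_r$ with $r\leq k$, so $\rho_R>2^{-k}$ (indeed $R\subseteq V_r$ lies outside $[-2^{-r},2^{-r})^d$). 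Hence every $R\in\cD$ with $\ell(R)=2^{-k}$ belongs to $\mathcal E_k$, and \eqref{eq:dyadic-displacement} gives
\[
\sum_{\ell(R)=2^{-k}}|R|\sup_n|a_{n,R}|\leq C_{d,A}(C_V+C_D).
\]
Summing the geometric factor $\ell(Q)/\ell(R)$ over $k$ gives $\|m_Q\|_1\leq C_{d,A}(C_V+C_D)|Q|$.

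\emph{$L_\infty$ bound.} Fix $x$. If $x\in X_Q^R$ then $Q\subseteq x-R$, that is, $R\subseteq x-Q$ would be false in general; the correct relation is $x-Q\supseteq$ a translate positioned so that $x\in R+Q$. The useful consequence is that $x$ lies in the cube $R+Q$ of side $\ell(R)+\ell(Q)\leq2\ell(R)$. Hence, for each scale $2^{-k}$, the number of $R\in\cQ_k$ with $x\in X_Q^R$ is at most $2^d$. Moreover, the maximality condition $\widehat Q\not\subseteq x-R$ forces $Q$ to lie within distance $\ell(Q)$ of the boundary of $x-R$. So
\[
m_Q(x)\leq|Q|\sum_{k:\,2^{-k}\geq\ell(Q)}\ \sum_{\substack{R\in\cQ_k\cap\cD\\ x\in R+Q}}\sup_n|a_{n,R}|.
\]
Here I need a pointwise bound on $|R|\sup_n|a_{n,R}|$, and \eqref{eq:dyadic-displacement} at scale $k$ gives $|R|\sup_n|a_{n,R}|\leq C(C_V+C_D)$. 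Thus each scale contributes at most $2^dC(C_V+C_D)\,|Q|/|R|$, and the sum over $2^{-k}\geq\ell(Q)$ is geometric, giving $\|m_Q\|_\infty\leq C_{d,A}(C_V+C_D)$.

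\emph{Main obstacle.} The delicate point is checking that every $R\in\cD$ of side $2^{-k}$ really lies in $\mathcal E_k$. This requires $\rho_R>2^{-k}$, i.e.\ that $R$ is not inside $[-2^{-k},2^{-k})^d$. It follows because $R\subseteq B\in\mathcal B_r$ with $\ell(B)=2^{-r}\geq\ell(R)$, so $r\leq k$ and $B\subseteq V_r$, which lies outside $[-2^{-r},2^{-r})^d\supseteq[-2^{-k},2^{-k})^d$. Once this is verified, both estimates are geometric sums. Absolute convergence in $L_1$ and uniform convergence then follow because the nonnegative series has finite $L_1$ norm and its scale-tails are uniformly small.
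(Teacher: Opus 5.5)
Your proposal is correct. The $L_1$ estimate is exactly the paper's argument: the volume bound \eqref{eq:cube-pairing-volume}, grouping by scale, the observation that every $R\in\cD$ with $\ell(R)=2^{-k}$ lies in $\mathcal E_k$ (the paper phrases this as $\rho_R>\ell(R)$), and summing the geometric factor $\ell(Q)/\ell(R)$.

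For the sup bound you take a longer route than needed. The paper does not count cubes. For $\ell(R)=2^{-s}$ it bounds each term by its sup norm, $\bigl\|\,|Q|\sup_n|a_{n,R}|\dsone_{X_Q^R}\bigr\|_\infty\leq 2^{-d(q-s)}|R|\sup_n|a_{n,R}|$. It then sums over \emph{all} $R$ at each scale, using the full scale bound $\sum_{\ell(R)=2^{-s}}|R|\sup_n|a_{n,R}|\leq C_{d,A}(C_V+C_D)$ from \eqref{eq:dyadic-displacement}, which you already used for $L_1$. This gives $\sum_{R\in\cD}\|\cdot\|_\infty<\infty$, so the $L_\infty$ bound and absolute uniform convergence follow together from the Weierstrass M-test.

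Your version (a single-term bound plus at most $2^d$ cubes per scale) gives the same pointwise bound. However, your justification of uniform convergence via ``scale-tails'' only controls the tail in the scale direction. It does not control the infinitely many cubes within a fixed scale; the M-test above closes that gap at no cost.

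Finally, replace the garbled containment sentence. The condition $Q\subseteq x-R$ is equivalent to $x-Q\subseteq R$. Since the cubes in $\cQ_k$ are disjoint, at most one $R\in\cQ_k$ can have $x\in X_Q^R$, so your count of $2^d$ is true but not sharp.
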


\begin{proof}
Every $R\in\cD$ satisfies $\rho_R>\ell(R)$.
By \eqref{eq:dyadic-displacement}, we have
\begin{equation}\label{eq:pairing-scale-bound}
    \sum_{\substack{R\in\cD\\\ell(R)=2^{-s}}}
    |R|\sup_n|a_{n,R}|
    \leq
    \sum_{R\in\cD\cap\mathcal E_s}
    |R|\sup_n|a_{n,R}|
    \leq C_{d,A}(C_V+C_D).
\end{equation}
Fix $Q\in\cQ_q$. Since $X_Q^R=\varnothing$ whenever
$\ell(R)<\ell(Q)$,
\begin{align*}
    \sum_{R\in\cD}
    \left\|
        |Q|\sup_n|a_{n,R}|\dsone_{X_Q^R}
    \right\|_\infty
    &\leq
    \sum_{s\leq q}2^{-d(q-s)}
    \sum_{\substack{R\in\cD\\\ell(R)=2^{-s}}}
    |R|\sup_n|a_{n,R}|\\
    &\leq
    C_{d,A}(C_V+C_D)\sum_{s\leq q}2^{-d(q-s)}
    \leq C_{d,A}(C_V+C_D).
\end{align*}
Similarly, by \eqref{eq:cube-pairing-volume},
\begin{align*}
    \sum_{R\in\cD}
    \left\|
        |Q|\sup_n|a_{n,R}|\dsone_{X_Q^R}
    \right\|_1
    &\leq
    d|Q|\sum_{s\leq q}2^{-(q-s)}
    \sum_{\substack{R\in\cD\\\ell(R)=2^{-s}}}
    |R|\sup_n|a_{n,R}|\\
    &
    \leq C_{d,A}(C_V+C_D)|Q|.
\end{align*}
These estimates prove absolute convergence in $L_1$ and
uniform convergence of the series defining $m_Q$, together
with its asserted norm bounds. Since
$|a_{n,R}|\leq\sup_m|a_{m,R}|$, we also have
convergence for $b_{n,Q}$ and the inequality $|b_{n,Q}|\leq m_Q$.
\end{proof}

The next proposition expresses convolution in terms of
dyadic averages and controls the coefficients outside
enlarged dyadic cubes.

\begin{proposition}\label{prop:kernel-pairing}
Let $\mathcal X$ be a Banach space and
$f\in L_1(\bR^d;\mathcal X)$. Then, for every $n\geq1$,
\begin{equation}\label{eq:input-cube-expansion}
    (K_n*f)(x)
    =\sum_{Q\in\cQ}b_{n,Q}(x)\bE_Qf
    \quad\text{for almost every }x\in\bR^d.
\end{equation}
For every bounded measurable set $E\subset\bR^d$,
\begin{equation}\label{eq:input-cube-majorant}
    \int_E\sum_{Q\in\cQ}
    m_Q(x)\|\bE_Qf\|_{\mathcal X}\,dx<\infty.
\end{equation}
In particular, the series in
\eqref{eq:input-cube-expansion} converges absolutely
almost everywhere and in $L_1(E;\mathcal X)$.

Moreover, for every $P\in\cQ$ and $\beta\geq3$,
\begin{equation}\label{eq:pairing-exterior-bound}
    \sum_{\substack{Q\subseteq P\\y\in Q}}
    \frac1{|Q|}
    \int_{(\beta P)^c}m_Q(x)\,dx
    \leq C_{d,A}(C_V+C_D)
    \quad\text{for almost every }y\in P.
\end{equation}
\end{proposition}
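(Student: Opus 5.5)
The plan is to reduce everything to the scalar coefficient bounds of Proposition~\ref{prop:dyadic-majorants} and Lemma~\ref{lemma:pairing-coefficients}, together with the cube identity \eqref{eq:cube-average-representation}. There are three steps: a pointwise majorant, then the expansion \eqref{eq:input-cube-expansion} by exchanging sums, then the exterior bound \eqref{eq:pairing-exterior-bound}.

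\emph{Majorant.} Put $M=\sum_{R\in\cD}\sup_n|a_{n,R}|\dsone_R$. Apply \eqref{eq:cube-average-representation} to the scalar function $\|f\|_{\mathcal X}$ and use $\|\bE_Qf\|_{\mathcal X}\leq\bE_Q\|f\|_{\mathcal X}$. This gives, for almost every $x$,
$$
\sum_Q m_Q(x)\|\bE_Qf\|_{\mathcal X}\leq\sum_R\sup_n|a_{n,R}|\,(\dsone_R*\|f\|_{\mathcal X})(x)=(M*\|f\|_{\mathcal X})(x),
$$
where positivity lets us interchange the sums freely. By Fubini,
$$
\int_E M*\|f\|_{\mathcal X}\leq\|f\|_1\sup_y\int_{E-y}M,
$$
so \eqref{eq:input-cube-majorant} reduces to showing $\sup_y\int_{E-y}M<\infty$. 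By \eqref{eq:local-coefficient-mass}, $\int_BM\leq2\int_B\sup_n|K_n|$ for each $B\in\mathcal B_j$. Hence $\int_{V_j}M\leq2\min\{C_V,\ \sup_n\|K_n\|_\infty|V_j|\}$. The bound $\sup_n\|K_n\|_\infty|V_j|$ makes the annuli near the origin summable. Away from the origin, a translate of the bounded set $E$ meets only boundedly many annuli $V_j$, with the bound uniform in $y$, each annulus carrying mass at most $2C_V$. This gives the required uniform bound.

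\emph{Expansion.} Using \eqref{eq:exact-kernel-series}, write $K_n*f=\sum_Ra_{n,R}(\dsone_R*f)$; the interchange of the sum with the convolution is justified by the same majorant $M*\|f\|$. Then insert \eqref{eq:cube-average-representation} for each $R$ and swap the $R$ and $Q$ sums. The swap is legitimate because the double series is dominated by $M*\|f\|_{\mathcal X}$, which is finite almost everywhere and in $L_1(E)$. Collecting terms gives exactly $b_{n,Q}(x)$, which proves \eqref{eq:input-cube-expansion} with absolute convergence a.e. and in $L_1(E;\mathcal X)$.

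\emph{Exterior bound.} Fix $P\in\cQ_p$ and $y\in P$. If $x\in X_Q^R\cap(\beta P)^c$ with $y\in Q\subseteq P$, then $y\in x-R$, so $x-y\in R$. Moreover $|x-y|_\infty\geq(\beta-1)\ell(P)/2\geq\ell(P)$, hence $\rho_R\geq\ell(P)$, with strict inequality up to null sets. Expanding $m_Q$, the left side of \eqref{eq:pairing-exterior-bound} is at most
$$
\sum_{R:\rho_R\geq\ell(P)}\sup_n|a_{n,R}|\sum_{\substack{Q\ni y\\ \ell(Q)\leq\min(\ell(P),\ell(R))}}|X_Q^R|.
$$
By \eqref{eq:cube-pairing-volume}, the inner sum is geometric in $\ell(Q)$ and is bounded by $2d\min(\ell(P),\ell(R))\ell(R)^{d-1}$. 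Split according to the size of $R$:
\begin{itemize}
\item If $\ell(R)\leq\ell(P)$, the contribution is $2d|R|\sup_n|a_{n,R}|$ with $R\in\cD\cap\mathcal E_p$. The total is bounded by \eqref{eq:dyadic-displacement}.
\item If $\ell(R)=2^{-s}>\ell(P)$, the contribution is $2d\,2^{-(p-s)}|R|\sup_n|a_{n,R}|$. Summing over each scale with \eqref{eq:pairing-scale-bound} and then over $s<p$ gives a convergent geometric series.
\end{itemize}
Both parts are $\leq C_{d,A}(C_V+C_D)$.

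The main obstacle is the integrability step: showing $\sup_y\int_{E-y}M<\infty$, so that all the exchanges of summation are justified. The exterior bound itself is bookkeeping once the displacement estimate \eqref{eq:dyadic-displacement} is available.
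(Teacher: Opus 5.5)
Your proposal is correct and follows essentially the same route as the paper. Both arguments dominate by $\sum_R\sup_n|a_{n,R}|\dsone_R(x-y)$ and bound its integral over translates of $E$ using the $\|K_n\|_\infty$ and $C_V$ bounds on the shells $V_j$; you split into shells near and far from the origin, where the paper splits by $|y|_\infty\leq 2M$ versus $|y|_\infty>2M$. The expansion and the exterior bound are proved the same way, via $\rho_R>\ell(P)$, the geometric sum over $Q\ni y$, and the split $\ell(R)\leq\ell(P)$ versus $\ell(R)>\ell(P)$ using \eqref{eq:dyadic-displacement} and \eqref{eq:pairing-scale-bound}.
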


\begin{proof}
Since $V_j$ is the disjoint union of the cubes in
$\mathcal B_j$, and every $R\in\cD$ is contained in a
unique $B\in\bigcup_k\mathcal B_k$,
\begin{align*}
    \int_{V_j}\sum_{R\in\cD}
    \sup_n|a_{n,R}|\dsone_R(x)\,dx
    =\sum_{R\in\cD}\sup_n|a_{n,R}|\,|R\cap V_j|
    =\sum_{B\in\mathcal B_j}
      \sum_{\substack{R\in\cD\\R\subseteq B}}
      |R|\sup_n|a_{n,R}|.
\end{align*}
Applying \eqref{eq:local-coefficient-mass} on each
$B\in\mathcal B_j$, we obtain
\begin{align*}
    \int_{V_j}\sum_{R\in\cD}
    \sup_n|a_{n,R}|\dsone_R(x)\,dx
    \leq2\sum_{B\in\mathcal B_j}
      \int_B\sup_n|K_n(x)|\,dx
     \leq2C_V.
\end{align*}
For $M>0$, every $B\in\bigcup_j\mathcal B_j$ satisfying
$|B\cap[-M,M]^d|>0$ is contained in $[-2M,2M]^d$.
Since these cubes are pairwise disjoint, by
\eqref{eq:local-coefficient-mass}, we have
$$
\int_{[-M,M]^d}\sum_{R\in\cD}
\sup_n|a_{n,R}|\dsone_R(x)\,dx
\leq2\int_{[-2M,2M]^d}\sup_n|K_n|
\leq2(4M)^d\sup_n\|K_n\|_\infty.
$$

Let $E\subseteq[-M,M]^d$ be measurable.
If $|y|_\infty\leq2M$, then
$E-y\subseteq[-3M,3M]^d$, and hence
$$
\int_E\sum_{R\in\cD}
\sup_n|a_{n,R}|\dsone_R(x-y)\,dx
\leq2(12M)^d\sup_n\|K_n\|_\infty.
$$
If $|y|_\infty>2M$, then, for $z\in E-y$,
$$
|y|_\infty-M\leq|z|_\infty\leq|y|_\infty+M,
\qquad
\frac{|y|_\infty+M}{|y|_\infty-M}<3.
$$
Thus $E-y$ meets at most four of the sets $V_j$, so
$$
\int_E\sum_{R\in\cD}
\sup_n|a_{n,R}|\dsone_R(x-y)\,dx
\leq8C_V.
$$
Consequently, we have
$$
\sup_{y\in\bR^d}
\int_E\sum_{R\in\cD}
\sup_n|a_{n,R}|\dsone_R(x-y)\,dx
\leq2(12M)^d\sup_n\|K_n\|_\infty+8C_V<\infty.
$$
Since
$|Q|\|\bE_Qf\|_{\mathcal X}
 \leq\int_Q\|f(y)\|_{\mathcal X}\,dy$,
\eqref{eq:exact-cube-pairing} implies
\begin{align*}
    \int_E\sum_{Q\in\cQ}
      m_Q(x)\|\bE_Qf\|_{\mathcal X}\,dx
    &\leq
      \int_E\int_{\bR^d}\|f(y)\|_{\mathcal X}
      \sum_{R\in\cD}\sup_n|a_{n,R}|
      \sum_{Q\in\cQ}
      \dsone_{X_Q^R}(x)\dsone_Q(y)\,dy\,dx\\
    &=
      \int_{\bR^d}\|f(y)\|_{\mathcal X}
      \int_E\sum_{R\in\cD}
      \sup_n|a_{n,R}|\dsone_R(x-y)\,dx\,dy\\
    &\leq
      \left(
      2(12M)^d\sup_n\|K_n\|_\infty+8C_V
      \right)\|f\|_{L_1(\bR^d;\mathcal X)}
      <\infty.
\end{align*}
This proves \eqref{eq:input-cube-majorant}. By
\eqref{eq:exact-kernel-series} and
\eqref{eq:cube-average-representation},
\begin{align*}
    K_n*f
    =\sum_{R\in\cD}a_{n,R}(\dsone_R*f)=\sum_{R\in\cD}\sum_{Q\in\cQ}
      a_{n,R}|Q|\dsone_{X_Q^R}\bE_Qf=\sum_{Q\in\cQ}b_{n,Q}\bE_Qf
\end{align*}
almost everywhere and in $L_1(E;\mathcal X)$.
This proves \eqref{eq:input-cube-expansion}.

It remains to prove \eqref{eq:pairing-exterior-bound}.
Fix $P\in\cQ_j$, $\beta\geq3$, and $y\in P$ outside
the dyadic boundaries. If $Q\subseteq P$, $y\in Q$, and
$x\in X_Q^R\setminus\beta P$, then
$
x-y\in R, |x-y|_\infty>2^{-j},
$
and therefore $\rho_R>2^{-j}$.
For every $k\geq0$, there is exactly one cube
$Q\in\cQ_{j+k}$ with $y\in Q\subseteq P$.
Since only cubes with $\ell(Q)\leq\ell(R)$ contribute, we get
\begin{align*}
    \sum_{\substack{Q\subseteq P,\ y\in Q\\
                     \ell(Q)\leq\ell(R)}}\ell(Q)
    =\sum_{\substack{k\geq0\\2^{-j-k}\leq\ell(R)}}
      2^{-j-k}
    =2\min\{2^{-j},\ell(R)\}.
\end{align*}
By \eqref{eq:cube-pairing-volume} and the preceding identity, we have
\begin{align*}
    \sum_{\substack{Q\subseteq P\\y\in Q}}
      \frac1{|Q|}\int_{(\beta P)^c}m_Q(x)\,dx
    &=
      \sum_{R\in\cD}\sup_n|a_{n,R}|
      \sum_{\substack{Q\subseteq P\\y\in Q}}
      |X_Q^R\setminus\beta P|\\
    &\leq
      d\sum_{\substack{R\in\cD\\\rho_R>2^{-j}}}
      \ell(R)^{d-1}\sup_n|a_{n,R}|
      \sum_{\substack{Q\subseteq P,\ y\in Q\\
                       \ell(Q)\leq\ell(R)}}\ell(Q)\\
    &=
      2d\sum_{\substack{R\in\cD\\\rho_R>2^{-j}}}
      |R|\sup_n|a_{n,R}|
      \min\left\{1,\frac{2^{-j}}{\ell(R)}\right\}.
\end{align*}
Splitting the last sum according to
$\ell(R)\leq2^{-j}$ and $\ell(R)=2^{k-j}$, $k\geq1$,
and applying \eqref{eq:dyadic-displacement} and
\eqref{eq:pairing-scale-bound}, we obtain
\begin{align*}
    \sum_{\substack{R\in\cD\\\rho_R>2^{-j}}}
      |R|\sup_n|a_{n,R}|
      \min\left\{1,\frac{2^{-j}}{\ell(R)}\right\}
    &\leq
      \sum_{R\in\cD\cap\mathcal E_j}
      |R|\sup_n|a_{n,R}|
      +\sum_{k\geq1}2^{-k}
       \sum_{\substack{R\in\cD\\\ell(R)=2^{k-j}}}
       |R|\sup_n|a_{n,R}|\\
    &\leq
      C_{d,A}(C_V+C_D)
      \left(1+\sum_{k\geq1}2^{-k}\right).
\end{align*}
Combining these estimates proves
\eqref{eq:pairing-exterior-bound}.
\end{proof}

\section{Proofs of the main theorems}
\label{sec:maximal}

In this section, we prove Theorems~\ref{thm:joint}
and~\ref{thm:truncations}.
We first recall an equivalent form of the joint
$L_1$-H\"ormander condition and regularize the convolution
operators. These preliminary arguments are standard,
and we include the proofs for completeness.
We then apply the results of
Sections~\ref{sec:dyadic-weak}
and~\ref{sec:kernel-decomposition}
to prove the main theorems.

\subsection{The joint \texorpdfstring{$L_1$}{L1}-H\"ormander condition}
\label{sec:hormander}

Let $(K_\lambda)_{\lambda\in\Lambda}$ be a family of
measurable functions on $\bR^d\setminus\{0\}$.
We say that this family satisfies the \emph{joint
$L_1$-H\"ormander condition} if there exists a finite
constant $C_H$ such that
\begin{equation}\label{eq:hormander}\tag{H}
    \sup_{y\in\bR^d}
    \int_{|x|\geq2|y|}
    \sup_{\lambda\in\Lambda}
    |K_\lambda(x-y)-K_\lambda(x)|\,dx
    \leq C_H.
\end{equation}

For later use, we record an equivalent formulation in
terms of centred cubes with dyadic side lengths.
For $k\in\bZ$, let
\[
    Q_0^{(k)}=[-2^{-k-1},2^{-k-1})^d.
\]

\begin{proposition}\label{prop:hormander-implications}
Condition~\eqref{eq:hormander} holds if and only if there
exist constants $A\geq1$ and $C_D<\infty$ such that
\begin{equation}\label{eq:dyadic-hormander}
    \sup_{k\in\bZ}
    \sup_{y\in Q_0^{(k)}}
    \int_{(AQ_0^{(k)})^c}
    \sup_{\lambda\in\Lambda}
    |K_\lambda(x-y)-K_\lambda(x)|\,dx
    \leq C_D.
\end{equation}
\end{proposition}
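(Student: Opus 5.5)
Both implications reduce to comparing the exclusion region in \eqref{eq:hormander}, namely the complement of the Euclidean ball of radius $2|y|$, with the exclusion region $(AQ_0^{(k)})^c$ in \eqref{eq:dyadic-hormander}. Throughout, write $D(x,y)=\sup_{\lambda}|K_\lambda(x-y)-K_\lambda(x)|$, which is nonnegative. For a fixed $y$, the integral of $D(\cdot,y)$ over a set is monotone in the set. So each direction needs only a containment of exclusion regions, together with a way to handle the part of one region that is missed by the other.

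For \eqref{eq:hormander}$\Rightarrow$\eqref{eq:dyadic-hormander}, take $A=4\sqrt d$, or any $A\ge4\sqrt d$. Let $y\in Q_0^{(k)}$. Then $|y|\le\sqrt d\,2^{-k-1}$, so $2|y|\le\sqrt d\,2^{-k}$. If $x\notin AQ_0^{(k)}$, then $|x|\ge|x|_\infty\ge A2^{-k-2}=\sqrt d\,2^{-k}\ge2|y|$. Hence $(AQ_0^{(k)})^c\subseteq\{|x|\ge2|y|\}$, and the integral in \eqref{eq:dyadic-hormander} is at most the integral in \eqref{eq:hormander}, which is at most $C_H$. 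This gives $C_D=C_H$.

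For \eqref{eq:dyadic-hormander}$\Rightarrow$\eqref{eq:hormander}, fix $y\neq0$. Choose $k$ so that $y\in Q_0^{(k)}$ with $|y|_\infty$ comparable to $2^{-k}$; concretely, pick $k$ with $2^{-k-2}\le|y|_\infty<2^{-k-1}$. The set $AQ_0^{(k)}$ then has radius of order $A|y|$, which may be much larger than $2|y|$. So the region $\{|x|\ge2|y|\}\cap AQ_0^{(k)}$, an annulus-like set $\{2|y|\le|x|\lesssim A\sqrt d\,|y|\}$, is not controlled directly, and this is the main obstacle. Without a size condition on the kernels, this middle region must be handled by iterating the hypothesis in small steps. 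Write $y=\sum_{i=1}^N y_i$ with $N\approx C_{d}A$ steps of size $|y_i|_\infty\le 2^{-m}$, where $2^{-m}\approx|y|/(A\sqrt d)$, so each $AQ_0^{(m)}$ lies inside the ball of radius $|y|$. Set $z_i=y_1+\dots+y_i$. Telescoping gives
\[
D(x,y)\le\sum_{i=1}^N\sup_\lambda|K_\lambda(x-z_i)-K_\lambda(x-z_{i-1})|.
\]
For each term, substitute $x'=x-z_{i-1}$. When $|x|\ge2|y|$, we have $|x'|\ge|y|$, so $x'\notin AQ_0^{(m)}$, and \eqref{eq:dyadic-hormander} applied at scale $m$ with the shift $y_i\in Q_0^{(m)}$ bounds the term by $C_D$. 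Summing over $i$ yields $C_H\le N C_D\le C_{d,A}C_D$.

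The one point to verify is the geometry: a translate by $z_{i-1}$, with $|z_{i-1}|\le|y|$, maps $\{|x|\ge2|y|\}$ into $\{|x'|\ge|y|\}$. This set must lie outside $AQ_0^{(m)}$, whose circumradius is $A\sqrt d\,2^{-m-1}$; choosing $2^{-m}\le|y|/(A\sqrt d)$ ensures this. The number of steps is then $N\lesssim |y|_\infty 2^{m}\lesssim A\sqrt d$, which depends only on $d$ and $A$.
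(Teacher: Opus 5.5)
Your proposal is correct and follows the paper's proof essentially step for step. One direction uses the containment $(AQ_0^{(k)})^c\subseteq\{|x|\geq2|y|\}$ for $A\gtrsim\sqrt d$, and the other telescopes along $N\approx A\sqrt d$ collinear steps $y/N$, each placed at a scale where $AQ_0^{(m)}$ lies inside the ball of radius $|y|/2$. The only slip is cosmetic: $y_i\in Q_0^{(m)}$ requires $|y_i|_\infty<2^{-m-1}$ rather than $|y_i|_\infty\leq2^{-m}$, which merely doubles $N$.
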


\begin{proof}
Suppose first that \eqref{eq:hormander} holds.
If $y\in Q_0^{(k)}$ and $x\notin AQ_0^{(k)}$, where
$A>2\sqrt d$, then
\[
    |x|
    \geq\frac{A}{2}2^{-k}
    \geq\sqrt d\,2^{-k}
    \geq2|y|.
\]
Thus \eqref{eq:dyadic-hormander} follows with
$C_D\leq C_H$.

Conversely, suppose that \eqref{eq:dyadic-hormander} holds.
Fix $y\ne0$, set $m=\lceil4A\sqrt d\rceil$ and $y^\prime=y/m$,
and choose $k\in\bZ$ such that $ 2|y^\prime|_\infty<2^{-k}\leq4|y^\prime|_\infty.$
Then $y^\prime\in Q_0^{(k)}$, and every $z\in AQ_0^{(k)}$
satisfies $ |z|
    \leq\frac{|y|}{2}.$
If $|x|\geq2|y|$ and $0\leq j<m$, then
\[
    |x-jy^\prime|
    \geq|x|-j|y^\prime|
    >|y|,
\]
so $x-jy^\prime\notin AQ_0^{(k)}$.
By the triangle inequality and a change of variables, we have
\begin{align*}
    \int_{|x|\geq2|y|}
    \sup_{\lambda\in\Lambda}
    |K_\lambda(x-y)-K_\lambda(x)|\,dx
    &\leq
    \sum_{j=0}^{m-1}
    \int_{|x|\geq2|y|}
    \sup_{\lambda\in\Lambda}
    |K_\lambda(x-(j+1)y^\prime)-K_\lambda(x-jy^\prime)|\,dx\\
    &\leq
    m\int_{(AQ_0^{(k)})^c}
    \sup_{\lambda\in\Lambda}
    |K_\lambda(z-y^\prime)-K_\lambda(z)|\,dz
    \leq mC_D.
\end{align*}
The case $y=0$ is immediate.
Taking the supremum over $y$ proves
\eqref{eq:hormander}.
\end{proof}

\subsection{Regularization of convolution operators}
\label{subsec:regularization}

In this subsection, we regularize the convolution
operators in Theorem~\ref{thm:joint}.
The resulting kernels are bounded and Lipschitz,
so that we can apply the decomposition established
in Section~\ref{sec:kernel-decomposition}.

Let $(T_n)_{n\geq1}$ be convolution
operators with scalar distribution kernels $\sigma_n$,
as defined in Section~\ref{subsec:distribution-convolutions}.
Assume that the restriction of $\sigma_n$ to
$\bR^d\setminus\{0\}$ is represented by
$K_n\in L_{1,\mathrm{loc}}(\bR^d\setminus\{0\})$.
Thus, for $f\in\mathscr D$,
\[
    T_nf(x)=\int_{\bR^d}K_n(x-y)f(y)\,dy
    \quad\text{for almost every }
    x\notin\supp_{\bR^d}f.
\]
Suppose that $(K_n)_n$ satisfies
\eqref{eq:hormander} with constant $C_H$ and that,
for some $1<r\leq\infty$,
\[
    \|(T_nf)_n\|_{L_r(\cN;\ell_\infty)}
    \leq M_r\|f\|_r,
    \qquad f\in L_r(\cN).
\]

Fix a nonnegative function
$\psi\in C_c^\infty(B(0,1/8))$ with $\int\psi=1$.
For $\varepsilon>0$, put
\[
    \psi_\varepsilon(x)
    =\varepsilon^{-d}\psi(x/\varepsilon),
    \qquad
    \eta_\varepsilon
    =\psi_\varepsilon*\psi_\varepsilon,
\]
and define
\[
    K_{n,\varepsilon}(x)
    =\langle\sigma_n,\eta_\varepsilon(x-\cdot)\rangle,
    \qquad
    T_{n,\varepsilon}f=T_n(\eta_\varepsilon*f).
\]
Since $\supp\eta_\varepsilon\subset B(0,\varepsilon/4)$,
\begin{equation}\label{eq:regularized-off-origin}
    K_{n,\varepsilon}(x)
    =\int_{\bR^d}K_n(x-y)\eta_\varepsilon(y)\,dy,
    \qquad |x|>\varepsilon/4.
\end{equation}

The following estimates follow from standard 
arguments. We include a short proof to record their
uniformity in $\varepsilon$.

\begin{lemma}\label{lem:regularized-kernels}
For every $\varepsilon>0$, the family
$(K_{n,\varepsilon})_{n\geq1}$ satisfies
\begin{equation}\label{eq:regularized-smoothness}
    \sup_n\left(
        \varepsilon^d\|K_{n,\varepsilon}\|_\infty
        +\varepsilon^{d+1}
         \|\nabla K_{n,\varepsilon}\|_\infty
    \right)
    \leq C_{d,r}M_r.
\end{equation}
Moreover,
\begin{equation}\label{eq:regularized-hormander}
    \sup_{h\ne0}\int_{|x|>2|h|}
    \sup_n
    |K_{n,\varepsilon}(x-h)-K_{n,\varepsilon}(x)|\,dx
    \leq C_{d,r}(M_r+C_H),
\end{equation}
and
\begin{equation}\label{eq:regularized-shell}
    \sup_{j\in\bZ}
    \int_{V_j}\sup_n|K_{n,\varepsilon}(x)|\,dx
    \leq C_{d,r}(M_r+C_H).
\end{equation}
\end{lemma}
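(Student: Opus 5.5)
The plan is to exploit the identity $K_{n,\varepsilon}=\sigma_n*\psi_\varepsilon*\psi_\varepsilon$. The assumed maximal bound is only used through single operators. Taking inputs $u\otimes e$, with $e\in\cP(\cM)$ of finite trace, each $T_n$ is bounded on scalar $L_r(\bR^d)$ with norm at most $M_r$, so $\|\sigma_n*u\|_r\le M_r\|u\|_r$. For \eqref{eq:regularized-smoothness} I would write
$$
K_{n,\varepsilon}(x)=\langle\sigma_n*\psi_\varepsilon,\psi_\varepsilon(x-\cdot)\rangle
$$
and apply H\"older with exponents $r,r'$. This gives
$$
|K_{n,\varepsilon}(x)|\le M_r\|\psi_\varepsilon\|_r\|\psi_\varepsilon\|_{r'}=M_r\|\psi\|_r\|\psi\|_{r'}\varepsilon^{-d}.
$$
Differentiating the second factor instead produces $\|\nabla\psi_\varepsilon\|_{r'}\sim\varepsilon^{-1-d/r}$, which yields the gradient bound. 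The case $r=\infty$ is the same with $r'=1$. Nothing depends on $n$, so the supremum over $n$ is free.

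For \eqref{eq:regularized-hormander}, fix $h\ne0$ and split the region $|x|>2|h|$ into $|x|\le\varepsilon$ and $|x|>\varepsilon$.

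\emph{Region $|x|\le\varepsilon$.} This region is nonempty only when $|h|<\varepsilon/2$. There I would use the Lipschitz bound from \eqref{eq:regularized-smoothness}: the integral is at most $C|h|\varepsilon^{-d-1}M_r\cdot\varepsilon^d\le CM_r$.

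\emph{Region $|x|>\varepsilon$.} Both $x$ and $x-h$ have norm greater than $\varepsilon/4$, because $|h|<|x|/2$. Hence \eqref{eq:regularized-off-origin} applies, and
$$
\sup_n|K_{n,\varepsilon}(x-h)-K_{n,\varepsilon}(x)|\le\int\sup_n|K_n(x-y-h)-K_n(x-y)|\,\eta_\varepsilon(y)\,dy .
$$
By Fubini and the change of variables $x'=x-y$, the new variable satisfies $|x'|>\max(2|h|,\varepsilon)-\varepsilon/4\ge\tfrac34\max(2|h|,\varepsilon)\ge\tfrac32|h|$. This is slightly short of the region $|x'|\ge2|h|$ in \eqref{eq:hormander}. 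To repair it I would telescope $h$ into $m$ equal steps $h/m$ (say $m=4$), as in the proof of Proposition~\ref{prop:hormander-implications}. Each step then satisfies $|x'-jh/m|\ge\tfrac32|h|-|h|\ge2|h|/m$, so \eqref{eq:hormander} applies to each step and gives a total of at most $mC_H$.

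\emph{Main obstacle.} I expect the bookkeeping of this case split to be the fiddly part. One must check that the off-origin formula is valid on both points $x$ and $x-h$, and that the shifted regions remain inside the H\"ormander region after telescoping.

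For \eqref{eq:regularized-shell}, fix $j$.

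\emph{Small scales, $2^{-j}\le C\varepsilon$.} The sup bound $M_r\varepsilon^{-d}$ times $|V_j|\lesssim2^{-jd}$ already gives $C_{d,r}M_r$.

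\emph{Large scales, $2^{-j}>C\varepsilon$.} Here I would mimic the averaging trick from Remark~\ref{remark:dyadic-majorants}, with the $L_1$ bound replaced by the $L_r$ bound. Let $a=c2^{-j}$ with a small dimensional constant $c$, and write
$$
K_{n,\varepsilon}(z)=(K_{n,\varepsilon}*\eta_a)(z)+\int\bigl(K_{n,\varepsilon}(z)-K_{n,\varepsilon}(z-y)\bigr)\eta_a(y)\,dy.
$$
For the first term, $K_{n,\varepsilon}*\eta_a=\sigma_n*(\psi_\varepsilon*\psi_a)*(\psi_\varepsilon*\psi_a)$. The same H\"ormander-free H\"older argument as above bounds it by $M_r\|\psi_\varepsilon*\psi_a\|_r\|\psi_\varepsilon*\psi_a\|_{r'}\le M_r\|\psi_a\|_r\|\psi_a\|_{r'}\lesssim M_ra^{-d}$, using Young's inequality. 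Its integral over $V_j$ is therefore at most $C_dM_r$. For the second term, take the supremum over $n$ inside and integrate over $z\in V_j$. Since $|y|\le a/4$ and $|z|\gtrsim2^{-j}$, we have $|z|>2|y|$ when $c$ is small. Fubini and the already proved \eqref{eq:regularized-hormander}, applied with $h=y$, then bound this term by $C_{d,r}(M_r+C_H)$.
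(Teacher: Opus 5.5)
Your proof is correct, and it takes a genuinely different route from the paper in the second and third estimates.

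\textbf{The paper's route.} The paper first turns the hypothesis into the scalar maximal inequality $\|\sup_n|\sigma_n*u|\|_r\le M_r\|u\|_r$ (via \cite[Remark~2.3]{jungexu07ergodic}) and uses it throughout.
\begin{itemize}
\item For \eqref{eq:regularized-hormander} it keeps a wide margin, $|x|>4\max\{|z|,\varepsilon\}$, so that the H\"ormander condition applies directly after the off-origin formula, with no telescoping.
\item It treats $2|z|<|x|\le4\varepsilon$ by H\"older against $\|\sup_n|K_{n,\varepsilon}|\|_r\le M_r\|\eta_\varepsilon\|_r$.
\item It treats the intermediate annulus $2|z|<|x|\le4|z|$, when $|z|>\varepsilon$, through the annular mass bound \eqref{eq:common-annular-mass} for $\sup_n|K_n|$. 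That bound needs the maximal $L_r$ estimate for $\sigma_n*\psi_{at}$.
\item For \eqref{eq:regularized-shell} it reuses the same annular-mass argument, applying H\"older on the annulus to $\sup_n|\sigma_n*(\eta_\varepsilon*\psi_{at})|$.
\end{itemize}

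\textbf{Your route.} You use only the individual bounds $\|\sigma_n*u\|_r\le M_r\|u\|_r$.
\begin{itemize}
\item The Lipschitz estimate handles $|x|\le\varepsilon$.
\item The four-step telescoping recovers the H\"ormander region from $|x'|>\tfrac32|h|$.
\item The symmetric factorization $\eta_a=\psi_a*\psi_a$ rewrites the smoothed term as $(\sigma_n*\phi)*\phi$ with $\phi=\psi_\varepsilon*\psi_a$. H\"older and Young then bound it pointwise by $CM_ra^{-d}$, uniformly in $n$ and $\varepsilon$.
\end{itemize}
The last step is what removes the need for any maximal function. A single mollification at scale $a$, as in the paper, would only give a pointwise bound that blows up as $\varepsilon\to0$.

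\textbf{Comparison.} Your argument shows the lemma holds with $M_r$ replaced by $\sup_n\|T_n\|_{L_r\to L_r}$. So the kernel estimates feeding Propositions~\ref{prop:dyadic-majorants} and~\ref{prop:kernel-pairing} need only the individual $L_r$ bounds. The paper's route is shorter once the scalar maximal inequality is available, since one annular-mass argument serves both \eqref{eq:regularized-hormander} and \eqref{eq:regularized-shell}.

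One small remark: your split into small and large scales in the shell estimate is unnecessary. The averaging argument with $a=c2^{-j}$ works for every $j$.
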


\begin{proof}
Choose $p\in\cP(\cM)$ with $0<\tau(p)<\infty$.
For $1<r<\infty$, and $u\in C_c^\infty(\bR^d)$
\begin{equation}\label{eq:scalar-maximal-bound}
    \begin{aligned}
        \left\|\sup_n|\sigma_n*u|\right\|_r
        =
        \tau(p)^{-1/r}
        \|(T_n(u\otimes p))_n
          \|_{L_r(\cN;\ell_\infty)}\leq M_r\|u\|_r,
    \end{aligned}
\end{equation}
see~\cite[Remark~2.3]{jungexu07ergodic}.
For $r=\infty$, the same estimate follows directly
by testing on $u\otimes p$.
Since
$K_{n,\varepsilon}
=(\sigma_n*\psi_\varepsilon)*\psi_\varepsilon$, by
H\"older's inequality, we get
\begin{align*}
    \|K_{n,\varepsilon}\|_\infty
    +\varepsilon\|\nabla K_{n,\varepsilon}\|_\infty
    \leq
    \|\sigma_n*\psi_\varepsilon\|_r
    \bigl(
        \|\psi_\varepsilon\|_{r'}
        +\varepsilon\|\nabla\psi_\varepsilon\|_{r'}
    \bigr)\leq C_{d,r}M_r\varepsilon^{-d},
\end{align*}
where $1/r+1/r'=1$.
This proves \eqref{eq:regularized-smoothness}.

For $0<a<b$ and $t>0$, note that
\[
    K_n(x)
    =(\sigma_n*\psi_{at})(x)
     +\int_{\bR^d}\psi_{at}(y)
       \bigl(K_n(x)-K_n(x-y)\bigr)\,dy,
    \qquad at<|x|<bt.
\]
Since $|x|>2|y|$ on this region whenever
$y\in\supp\psi_{at}$, equations
\eqref{eq:hormander} and \eqref{eq:scalar-maximal-bound}
imply
\begin{equation}\label{eq:common-annular-mass}
    \begin{aligned}
        \int_{at<|x|<bt}\sup_n|K_n(x)|\,dx
       \leq C_{d,r,a,b}M_r+C_H.
    \end{aligned}
\end{equation}
To prove \eqref{eq:regularized-hormander}, fix $z\ne0$.
By \eqref{eq:regularized-off-origin}, we have
\begin{align*}
    \int_{|x|>4\max\{|z|,\varepsilon\}}
    \sup_n
    |K_{n,\varepsilon}(x-z)-K_{n,\varepsilon}(x)|\,dx
    \leq C_H.
\end{align*}
Here we used $|x-y|>2|z|$ for
$y\in\supp\eta_\varepsilon$.
If $|z|\leq\varepsilon$, by
\eqref{eq:scalar-maximal-bound}, we get
\begin{align*}
\int_{2|z|<|x|\leq4\varepsilon}
    \sup_n
    |K_{n,\varepsilon}(x-z)-K_{n,\varepsilon}(x)|\,dx\leq
    2|B(0,5\varepsilon)|^{1/r'}
    M_r\|\eta_\varepsilon\|_r
    \leq C_{d,r}M_r.
\end{align*}
If $|z|>\varepsilon$, \eqref{eq:regularized-off-origin} yields
\begin{align*}
    \int_{2|z|<|x|\leq4|z|}
    \sup_n
    |K_{n,\varepsilon}(x-z)-K_{n,\varepsilon}(x)|\,dx\leq
    2\int_{3|z|/4<|x|<21|z|/4}
    \sup_n|K_n(x)|\,dx
\end{align*}
Combining the preceding estimates yields
\eqref{eq:regularized-hormander}.

Finally,
applying the argument for
\eqref{eq:common-annular-mass} to $K_{n,\varepsilon}$,
with \eqref{eq:regularized-hormander}, we have
\[
    \int_{at<|x|<bt}
    \sup_n|K_{n,\varepsilon}(x)|\,dx
    \leq C_{d,r,a,b}(M_r+C_H).
\]
Since
$V_j\subset\{x:2^{-j-1}<|x|<3\sqrt d\,2^{-j}\}$,
this proves \eqref{eq:regularized-shell}.
\end{proof}

We now verify that $K_{n,\varepsilon}$ is the convolution
kernel of $T_{n,\varepsilon}$ and that the original maximal
estimate remains valid.

\begin{proposition}\label{prop:regularization}
Let $1<r\leq\infty$ be the exponent in the assumed
maximal estimate.
For every $\varepsilon>0$ and
$f\in L_1(\cN)\cap L_r(\cN)$,
\begin{equation}\label{eq:regularized-convolution}
    T_{n,\varepsilon}f
    =K_{n,\varepsilon}*f
    \qquad\text{almost everywhere}.
\end{equation}
Moreover,
\begin{equation}\label{eq:regularized-maximal-bound}
    \|(T_{n,\varepsilon}f)_n\|_{L_r(\cN;\ell_\infty)}
    \leq M_r\|f\|_r,
    \qquad f\in L_r(\cN).
\end{equation}
\end{proposition}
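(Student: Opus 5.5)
The plan is to establish \eqref{eq:regularized-convolution} first on the dense class $\mathscr D$ by a distributional computation, and then extend it by approximation. The bound \eqref{eq:regularized-maximal-bound} is essentially immediate and I treat it first.

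\emph{Maximal bound.} Since $\eta_\varepsilon=\psi_\varepsilon*\psi_\varepsilon\geq0$ with $\int\eta_\varepsilon=1$, \eqref{eq:integrable-kernel-bound} gives $\|\eta_\varepsilon*f\|_r\leq\|f\|_r$ and $\eta_\varepsilon*f\in L_r(\cN)$. By definition $T_{n,\varepsilon}f=T_n(\eta_\varepsilon*f)$, so the assumed maximal estimate applied to $\eta_\varepsilon*f$ yields
$$
\|(T_{n,\varepsilon}f)_n\|_{L_r(\cN;\ell_\infty)}\leq M_r\|\eta_\varepsilon*f\|_r\leq M_r\|f\|_r .
$$
Nothing more is needed here.

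\emph{Identity on $\mathscr D$.} Let $f=\sum_k u_k\otimes a_k\in\mathscr D$. Then $\eta_\varepsilon*f=\sum_k(\eta_\varepsilon*u_k)\otimes a_k\in\mathscr D$, because $\eta_\varepsilon*u_k\in C_c^\infty(\bR^d)$. Hence
$$
T_{n,\varepsilon}f=\sum_k\bigl(\sigma_n*(\eta_\varepsilon*u_k)\bigr)\otimes a_k .
$$
By associativity of the convolution of a distribution with two compactly supported test functions, $\sigma_n*(\eta_\varepsilon*u_k)=(\sigma_n*\eta_\varepsilon)*u_k$. Here $\sigma_n*\eta_\varepsilon$ is the smooth function $x\mapsto\langle\sigma_n,\eta_\varepsilon(x-\cdot)\rangle=K_{n,\varepsilon}(x)$. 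Therefore $T_{n,\varepsilon}f=K_{n,\varepsilon}*f$ pointwise on $\mathscr D$. Here $K_{n,\varepsilon}*f$ is understood as the Bochner integral $\int K_{n,\varepsilon}(y)f(\cdot-y)\,dy$, which converges since $K_{n,\varepsilon}$ is bounded by Lemma~\ref{lem:regularized-kernels}.

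\emph{Extension to $L_1(\cN)\cap L_r(\cN)$ for $1<r<\infty$.} I would choose $f_k\in\mathscr D$ with $f_k\to f$ in both $L_1(\cN)$ and $L_r(\cN)$, which is possible by density of $\mathscr D$ in $L_1\cap L_r$. For the left side, each $T_n$ is bounded on $L_r$, since $\|T_nh\|_r\leq\|(T_mh)_m\|_{L_r(\cN;\ell_\infty)}\leq M_r\|h\|_r$; together with the contractivity of $\eta_\varepsilon*$ this gives $T_{n,\varepsilon}f_k\to T_{n,\varepsilon}f$ in $L_r(\cN)$. For the right side, the pointwise bound
$$
\|K_{n,\varepsilon}*(f_k-f)(x)\|_{L_1(\cM)}\leq\|K_{n,\varepsilon}\|_\infty\|f_k-f\|_1
$$
shows $K_{n,\varepsilon}*f_k\to K_{n,\varepsilon}*f$ uniformly in $x$ with values in $L_1(\cM)$. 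Passing to an a.e.\ convergent subsequence on the left then identifies the two limits almost everywhere.

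\emph{The case $r=\infty$ (main obstacle).} Here $\mathscr D$ is not norm dense in $L_\infty(\cN)$, so the argument must go through the weak$^*$ meaning of $T_n$ on $L_\infty(\cN)$, namely as the weak$^*$-continuous extension determined by its action on $\mathscr D$ via duality. For $f\in L_1(\cN)\cap L_\infty(\cN)$ I would choose $f_k\in\mathscr D$ with $\|f_k\|_\infty\leq C\|f\|_\infty$, $f_k\to f$ in $L_1(\cN)$, and $f_k\to f$ weak$^*$, for instance by spatial truncation, mollification and finite-rank approximation in $\cM$. Then $\eta_\varepsilon*f_k\to\eta_\varepsilon*f$ weak$^*$, so $T_{n,\varepsilon}f_k\to T_{n,\varepsilon}f$ weak$^*$. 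Meanwhile $K_{n,\varepsilon}*f_k\to K_{n,\varepsilon}*f$ uniformly in $L_1(\cM)$ as before, and hence also in the sense of distributions tested against $C_c(\bR^d)\otimes S(\cM)$. Both limits therefore coincide as elements of $L_0$ restricted to bounded sets, which gives the identity almost everywhere.

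I expect this bookkeeping—making precise which extension of $T_n$ to $L_\infty$ the hypothesis refers to, and checking that it is compatible with these approximations—to be the only genuinely delicate point. Everything else reduces to associativity on test functions and the bounds already recorded in Lemma~\ref{lem:regularized-kernels}.
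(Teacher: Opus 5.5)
Your treatment of the maximal bound \eqref{eq:regularized-maximal-bound}, of the identity on $\mathscr D$ via associativity, and of the extension for $1<r<\infty$ matches the paper, which compresses all of this into ``by a density argument''. For $r=\infty$, however, you take a genuinely different route.

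\emph{What the paper does for $r=\infty$.} It does not approximate at all. Testing \eqref{eq:scalar-maximal-bound} with $r=\infty$ gives $|\langle\sigma_n,u\rangle|\leq M_\infty\|u\|_\infty$. By the Riesz representation theorem $\sigma_n$ is therefore a finite measure. Since it equals $K_n\,dx$ off the origin, $\sigma_n=K_n\,dx+c_n\delta_0$ with $\|K_n\|_1+|c_n|\leq M_\infty$. Then $T_n$ is convolution with this measure on all of $\cN$, and $T_{n,\varepsilon}f=(K_n*\eta_\varepsilon+c_n\eta_\varepsilon)*f=K_{n,\varepsilon}*f$ by associativity.

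\emph{What this buys.} It settles exactly the point you flagged as delicate. It proves that a weak$^*$-continuous extension of $T_n$ to $\cN$ exists and identifies it concretely. It also shows $K_{n,\varepsilon}\in L_1(\bR^d)$, so the right-hand side is a convolution in the sense of Section~\ref{subsec:distribution-convolutions}.

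\emph{What your route needs.} Your argument presupposes the weak$^*$-continuous extension. Constructing it ``via duality'' requires showing that convolution with the reflected $\sigma_n$ is bounded on $L_1(\cN)$, and that is equivalent to the same finite-measure fact. On top of this you need a bounded approximation lemma in $\mathscr D$ and an identification of limits in $L_0$.

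Your sketch of the approximants is loose. Mollifying $f$ itself does not produce an element of the algebraic tensor product. Instead, first pass to finitely many dyadic step functions whose values lie in $S(\cM)$, for instance spectral truncations of the real and imaginary parts of $\bE_Qf$. Then mollify the indicators. This keeps $\|f_k\|_\infty\leq 2\|f\|_\infty$.

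With that repair your route works, and it treats all exponents uniformly by approximation. The paper's argument is shorter and justifies the meaning of the hypothesis at $r=\infty$ by itself.
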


\begin{proof}
For $r<\infty$, the identity extends to every
$f\in L_1(\cN)\cap L_r(\cN)$ by a density argument.
For $r=\infty$, \eqref{eq:scalar-maximal-bound} gives
$|\langle\sigma_n,u\rangle|
    \leq M_\infty\|u\|_\infty.$
By the Riesz representation theorem, $\sigma_n$ is a
finite complex measure.
Since it agrees with $K_n\,dx$ away from the origin, we have
\[
    \sigma_n=K_n\,dx+c_n\delta_0,
    \qquad
    \|K_n\|_1+|c_n|\leq M_\infty,
\]
where $\delta_0$ is the Dirac measure at the origin.
Moreover, $T_nf=K_n*f+c_nf$ for every $f\in\cN$
by weak$^*$ continuity.
Hence, we get
\[
    T_{n,\varepsilon}f
    =(K_n*\eta_\varepsilon+c_n\eta_\varepsilon)*f
    =K_{n,\varepsilon}*f,
\]
which completes the proof.
\end{proof}

The preceding estimates allow us to apply
Propositions~\ref{prop:dyadic-majorants}
and~\ref{prop:kernel-pairing} to the regularized kernels,
with coefficient bounds independent of $\varepsilon$.
We can therefore use Theorem~\ref{prop:exterior-endpoint}
to prove Theorem~\ref{thm:joint}.

\subsection{Proof of Theorem~\ref{thm:joint}}

Theorem~\ref{prop:exterior-endpoint} gives a weak type
$(1,1)$ estimate for $(T_{n,\varepsilon})_n$, with a
constant independent of $\varepsilon$.
Letting $\varepsilon\to0$, we obtain the same type of
estimate for $(T_n)_n$ on $\mathscr D$.
We then extend each $T_n$ to $L_1(\cN)$ and use approximation to obtain the weak
type $(1,1)$ estimate on all of $L_1(\cN)$.

\begin{proof}[Proof of Theorem~\ref{thm:joint}]
If $M_r=0$, the conclusion is immediate.
Assume that $M_r>0$.
For $\varepsilon>0$, let
\[
    T_{n,\varepsilon}f=T_n(\eta_\varepsilon*f).
\]
The estimates in Subsection~\ref{subsec:regularization},
together with Propositions~\ref{prop:hormander-implications},
\ref{prop:dyadic-majorants} and
\ref{prop:kernel-pairing}, give a dyadic representation
of $(T_{n,\varepsilon})_n$ for which
\[
    \Gamma\leq C_{d,r}(M_r+C_H),
    \qquad \beta=3.
\]
By Proposition~\ref{prop:regularization}, this representation
agrees with $T_{n,\varepsilon}$ on $\mathscr D_0$, and
\[
    \|(T_{n,\varepsilon}f)_n\|_{L_r(\cN;\ell_\infty)}
    \leq M_r\|f\|_r.
\]
By Theorem~\ref{prop:exterior-endpoint}, we can find
a constant $A>0$, independent of $\varepsilon$, such that
$A\leq C_{d,r}(M_r+C_H)$ and, for every
$f\in\mathscr D$ and $\lambda>0$, there is a projection
$e\in\cP(\cN)$ satisfying
\begin{equation}\label{eq:regularized-weak}
    \sup_n\|eT_{n,\varepsilon}(f)e\|_\infty\leq\lambda,
    \qquad
    \lambda\varphi(1-e)\leq A\|f\|_1.
\end{equation}

Fix $f\in\mathscr D$ and $\lambda>0$.
If $r<\infty$, then
\[
    \|(T_n(f-\eta_\varepsilon*f))_n
      \|_{L_r(\cN;\ell_\infty)}
    \leq M_r\|f-\eta_\varepsilon*f\|_r
    \longrightarrow0
    \qquad(\varepsilon\to0).
\]
By Lemma~\ref{lemma:factor-cut}, for sufficiently small
$\varepsilon$ there is a projection $e_1$ such that
\[
    \sup_n
    \|e_1T_n(f-\eta_\varepsilon*f)e_1\|_\infty
    \leq\frac{\lambda}{2},
    \qquad
    \varphi(1-e_1)\leq\frac{A}{\lambda}\|f\|_1.
\]
By \eqref{eq:regularized-weak}, there is also a projection
$e_2$ such that
\[
    \sup_n\|e_2T_{n,\varepsilon}(f)e_2\|_\infty
    \leq\frac{\lambda}{2},
    \qquad
    \varphi(1-e_2)\leq\frac{2A}{\lambda}\|f\|_1.
\]
Set $e=e_1\wedge e_2$.
Since
\[
    T_nf
    =T_{n,\varepsilon}f
     +T_n(f-\eta_\varepsilon*f),
\]
we obtain
\begin{equation}\label{eq:dense-weak-estimate}
    \sup_n\|eT_n(f)e\|_\infty\leq\lambda,
    \qquad
    \lambda\varphi(1-e)\leq3A\|f\|_1.
\end{equation}

For $r=\infty$, the same conclusion follows from
\[
    \sup_n\|T_n(f-\eta_\varepsilon*f)\|_\infty
    \leq M_\infty\|f-\eta_\varepsilon*f\|_\infty
    \longrightarrow0,
\]
since $f$ is a finite sum of smooth compactly supported
scalar functions tensored with bounded operators.
A standard density argument gives the extensions of
$T_n$ to $L_1(\cN)$ and the weak type $(1,1)$ estimate
on $L_1(\cN)$ with constant at most $CA$.
Since $A\leq C_{d,r}(M_r+C_H)$, the proof is complete.
\end{proof}

\subsection{Proof of Theorem~\ref{thm:truncations}}
\label{pf of B}

We apply Theorem~\ref{thm:joint} to the dyadic truncations
and then pass to arbitrary radii using the size condition
and the noncommutative Hardy--Littlewood maximal inequality.
This reduction follows the argument in
\cite[Section~3.2]{honglaixu23sigular}, with $T_{2^k}$
in place of the smooth truncations used there.

\begin{lemma}\label{lemma:dyadic-truncations}
Suppose that $|K(x)|\leq A|x|^{-d}$ for $x\ne0$ and
that $K$ satisfies the $L_1$-H\"ormander condition
with constant $C_H$.
Then the kernels
$K_k=K\dsone_{\{|x|>2^k\}}$, $k\in\bZ$, satisfy
$$
\sup_{h\ne0}\int_{|x|>2|h|}
\sup_{k\in\bZ}|K_k(x-h)-K_k(x)|\,dx
\leq C_H+C_dA.
$$
\end{lemma}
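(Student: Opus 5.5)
Fix $h\ne0$ and $x$ with $|x|>2|h|$. For each $k$, write
$$
K_k(x-h)-K_k(x)=\bigl(K(x-h)-K(x)\bigr)\dsone_{\{|x-h|>2^k,\,|x|>2^k\}}
+K(x-h)\bigl(\dsone_{\{|x-h|>2^k\}}-\dsone_{\{|x|>2^k\}}\bigr)\dsone_{\{|x|\leq 2^k\}}
+K(x)\bigl(\dsone_{\{|x-h|>2^k\}}-\dsone_{\{|x|>2^k\}}\bigr)\dsone_{\{|x-h|\le2^k\}},
$$
or, more simply, bound
$$
|K_k(x-h)-K_k(x)|\leq|K(x-h)-K(x)|+\bigl(|K(x-h)|+|K(x)|\bigr)\dsone_{\{\min(|x|,|x-h|)\leq2^k<\max(|x|,|x-h|)\}}.
$$
Taking $\sup_k$ gives a pointwise bound: the first term does not depend on $k$, and the indicator is at most $1$. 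The second term is nonzero only when the exceptional set is nonempty for some $k$, so it is always dominated by $|K(x-h)|+|K(x)|$. That alone is useless, and the point is to exploit the size condition $|K(x)|\leq A|x|^{-d}$ together with $|x|>2|h|$: there $|x-h|\geq|x|/2$ and $|x-h|\le 3|x|/2$, so $|K(x-h)|+|K(x)|\leq (2^d+1)A|x|^{-d}$.

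The key step is to integrate the supremum of the indicator terms in a way that avoids a logarithmic divergence. For fixed $x$, the set of $k$ with $\min(|x|,|x-h|)\le 2^k<\max(|x|,|x-h|)$ can be nonempty, but since we take a supremum over $k$, not a sum, we only need
$$
\int_{|x|>2|h|}\sup_k\dsone_{\{\min\le2^k<\max\}}(x)\,A|x|^{-d}\,dx
\le C_dA\int_{|x|>2|h|}\dsone_{\{\exists k:\ \min(|x|,|x-h|)\le 2^k<\max(|x|,|x-h|)\}}|x|^{-d}\,dx .
$$
The set of such $x$ is contained in $\bigcup_k\{x:\,||x|-2^k|\le|h|,\ |x|>2|h|\}$, since $\bigl||x|-|x-h|\bigr|\le|h|$ and $2^k$ lies between them. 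Only $k$ with $2^k>|h|$ contribute, because $|x|>2|h|$ forces $2^k\ge|x|-|h|>|h|$. On each such shell $|x|\sim2^k$, and its measure is $\lesssim_d 2^{k(d-1)}|h|$, so its contribution is $\lesssim_d A\,|h|2^{-k}$. Summing the geometric series over $2^k>|h|$ gives $C_dA$.

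Adding the first term, whose integral is at most $C_H$ by the $L_1$-H\"ormander condition for $K$, yields the stated bound $C_H+C_dA$. The only delicate point is the shell-counting, namely ensuring that we sum over shells rather than bound a supremum by an unrestricted integral of $|x|^{-d}$, which would diverge logarithmically; the restriction $|x|>2|h|$ and the resulting geometric decay $|h|2^{-k}$ handle this. The case of boundary sets of measure zero ($|x|=2^k$) is irrelevant.
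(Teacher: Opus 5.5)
Your argument is correct and is essentially the paper's proof. Both bound the difference pointwise by $|K(x-h)-K(x)|$ plus a size term supported where $2^k$ lies between $|x-h|$ and $|x|$, and then replace the supremum over $k$ by a sum of shell contributions $\lesssim_d A|h|2^{-k}$ over $2^k>|h|$. The paper writes the error term with $|K(x)|$ alone, via $K_k(x-h)-K_k(x)=(K(x-h)-K(x))\dsone_{\{|x-h|>2^k\}}+K(x)(\dsone_{\{|x-h|>2^k\}}-\dsone_{\{|x|>2^k\}})$, so it does not need $|x-h|\geq|x|/2$. The one step you left implicit, $|x|\sim2^k$ on each shell, follows from $|x|\geq2^k-|h|>2^k-|x|/2$, that is, $|x|>\tfrac23 2^k$.
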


\begin{proof}
For every $k\in\bZ$, one has
$$
\begin{aligned}
    |K_k(x-h)-K_k(x)|
    \leq |K(x-h)-K(x)|+
    A|x|^{-d}
    \left|
        \dsone_{\{|x-h|>2^k\}}
        -\dsone_{\{|x|>2^k\}}
    \right|.
\end{aligned}
$$
Then, the H\"ormander condition therefore yields
$$
\begin{aligned}
    &\int_{|x|>2|h|}
    \sup_k|K_k(x-h)-K_k(x)|\,dx\leq
    C_H+C_dA
    \left(
        1+\sum_{\substack{k\in\bZ\\2^k>2|h|}}
        \frac{|h|}{2^k}
    \right)
    \leq C_H+C_dA.
\end{aligned}
$$
\end{proof}

For $f\in L_1(\cN)_+$, define
$$
M_kf(x)
=2^{-d(k+1)}
 \int_{x+[-2^k,2^k]^d}f(y)\,dy,
\qquad k\in\bZ.
$$
By the noncommutative Hardy--Littlewood maximal inequality
\cite[Theorem~4.1]{hongwangliao2017noncommutative},
applied to $\bR^d$ with the metric $|x-y|_\infty$
and Lebesgue measure, for every $f\in L_1(\cN)_+$
and $t>0$ there is a projection $e\in\cP(\cN)$ such that
\begin{equation}\label{eq:centered-cube-weak}
    \sup_{k\in\bZ}\|eM_k(f)e\|_\infty\leq t,
    \qquad
    \varphi(1-e)\leq\frac{C_d}{t}\|f\|_1.
\end{equation}

\begin{proof}[Proof of Theorem~\ref{thm:truncations}]
The case $A=0$ is immediate, so assume that $A>0$.
Restricting the assumed maximal strong estimate to
$\varepsilon=2^k$ and applying
Lemma~\ref{lemma:dyadic-truncations} and
Theorem~\ref{thm:joint}, we obtain a weak type $(1,1)$
estimate for $(T_{2^k})_{k\in\bZ}$ with constant at most
$C_{d,p_0}(M_{p_0}+C_H+A)$.
Since $K_k\in L_\infty(\bR^d)$, approximation by
$\mathscr D$ in $L_1$ identifies these extensions
with the truncated integrals.

Fix $f\in L_1(\cN)$ and $2^k\leq\varepsilon<2^{k+1}$.
The kernel of $T_\varepsilon-T_{2^k}$ satisfies
$$
\left|
K(x)\left(
    \dsone_{\{|x|>\varepsilon\}}
    -\dsone_{\{|x|>2^k\}}
\right)
\right|
\leq
A2^{-kd}\dsone_{[-2^{k+1},2^{k+1}]^d}(x).
$$
Write
$r_{\varepsilon,k}
=K(\dsone_{\{|\cdot|>\varepsilon\}}
   -\dsone_{\{|\cdot|>2^k\}})$.
For $|c|\leq1$, we have
$$
\begin{pmatrix}
    |f(y)^*|&cf(y)\\
    \overline c f(y)^*&|f(y)|
\end{pmatrix}\geq0.
$$
Since
$|r_{\varepsilon,k}(y)|
\leq A2^{-kd}\dsone_{[-2^{k+1},2^{k+1}]^d}(y)$, using
$$
A2^{-kd}\int_{[-2^{k+1},2^{k+1}]^d}|f(x-y)|\,dy
=4^dA\,M_{k+1}(|f|)(x),
$$
and the corresponding identity for $|f^*|$, we have
\begin{equation}\label{eq:truncation-positive-block}
    \begin{pmatrix}
        4^dA M_{k+1}(|f^*|)
        &(T_\varepsilon-T_{2^k})f\\
        ((T_\varepsilon-T_{2^k})f)^*
        &4^dA M_{k+1}(|f|)
    \end{pmatrix}\geq0.
\end{equation}

For $t>0$, apply \eqref{eq:centered-cube-weak} to
$|f^*|$ and $|f|$ at height $t/(4^dA)$.
We obtain projections $e_1,e_2$ such that
$$
\sup_k\|e_1M_k(|f^*|)e_1\|_\infty
\leq\frac{t}{4^dA},
\qquad
\sup_k\|e_2M_k(|f|)e_2\|_\infty
\leq\frac{t}{4^dA},
$$
and
$$
\varphi(1-e_1)+\varphi(1-e_2)
\leq\frac{C_dA}{t}\|f\|_1.
$$
Set $e_3=e_1\wedge e_2$. By
\eqref{eq:truncation-positive-block},
$$
\begin{pmatrix}
    4^dA\,e_3M_{k+1}(|f^*|)e_3
    &e_3(T_\varepsilon-T_{2^k})(f)e_3\\
    \bigl(e_3(T_\varepsilon-T_{2^k})(f)e_3\bigr)^*
    &4^dA\,e_3M_{k+1}(|f|)e_3
\end{pmatrix}\geq0.
$$
Since $e_3\leq e_1,e_2$, both diagonal entries are
bounded above by $te_3$.
By Lemma~\ref{lemma:positive-block}, we get
\begin{equation}\label{eq:truncation-difference-weak}
    \sup_{k\in\bZ}
    \sup_{2^k\leq\varepsilon<2^{k+1}}
    \|e_3(T_\varepsilon-T_{2^k})(f)e_3\|_\infty
    \leq t,
    \qquad
    \varphi(1-e_3)\leq\frac{C_dA}{t}\|f\|_1.
\end{equation}

Finally, fix $\lambda>0$.
By the dyadic weak type estimate, we can find a projection $e_0$
such that
$$
\sup_k\|e_0T_{2^k}(f)e_0\|_\infty
\leq\frac{\lambda}{2},
\qquad
\varphi(1-e_0)
\leq\frac{C_{d,p_0}}{\lambda}
(M_{p_0}+C_H+A)\|f\|_1.
$$
Choose $e_3$ in \eqref{eq:truncation-difference-weak}
with $t=\lambda/2$ and set $e=e_0\wedge e_3$.
For $2^k\leq\varepsilon<2^{k+1}$,
$$
\begin{aligned}
    \|eT_\varepsilon(f)e\|_\infty
    \leq
    \|eT_{2^k}(f)e\|_\infty
    +\|e(T_\varepsilon-T_{2^k})(f)e\|_\infty\leq\lambda.
\end{aligned}
$$
Moreover,
$$
\begin{aligned}
    \lambda\varphi(1-e)
    \leq
    \lambda\varphi(1-e_0)+\lambda\varphi(1-e_3)\leq
    C_{d,p_0}(M_{p_0}+C_H+A)\|f\|_1.
\end{aligned}
$$
This proves the theorem.
\end{proof}

\section{Applications I: Maximal inequalities and approximation of the identity}
\label{sec:approximation}

In this section, we use Theorem~\ref{thm:joint} to obtain
maximal inequalities for integrable approximation kernels
and multiparameter Poisson averages on Lie groups.
The examples include kernels with singularities away from
the origin that do not satisfy the $L_2$ integral regularity
condition.
For some of these kernels, Hardy--Littlewood domination
also fails. In the Poisson application, a different issue
arises: direct Hardy--Littlewood domination gives a linear
loss for displaced interval averages, while the summation
in the proof requires a logarithmic bound. Our theorem
provides this logarithmic bound through the joint
$L_1$-H\"ormander condition.

We use the classical scalar kernel estimates of
Z\'o~\cite{zo76appro}, Stein~\cite{stein93bookharmonic},
and Sj\"ogren~\cite{sjogren86admissible} without repeating
their proofs. Applying our theorem to the kernels,
gives the noncommutative estimates used below.

\subsection{Integrable kernels and dyadic approximation}

We first establish maximal inequalities for integrable
kernels. Applied to dilations of a fixed kernel, these
estimates will yield bilateral almost uniform convergence
for dyadic approximations of the identity.

\begin{corollary}\label{thm:main-hormander}
Let $(K_\lambda)_{\lambda\in\Lambda}\subset L_1(\bR^d)$
satisfy
$
\sup_{\lambda\in\Lambda}\|K_\lambda\|_1<\infty
$
and the joint H\"ormander condition \eqref{eq:hormander}.
Then the family $(T_\lambda)_{\lambda\in\Lambda}$,
defined by $T_\lambda f=K_\lambda*f$, is of weak
type $(1,1)$ and of strong type $(p,p)$
for $1<p\leq\infty$.
\end{corollary}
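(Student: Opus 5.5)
The plan is to deduce everything from Theorem~\ref{thm:joint} with the endpoint exponent $r=\infty$, and then to obtain the intermediate strong type bounds by the Junge--Xu interpolation in Lemma~\ref{lemma:positive-interpolation} after reducing to positive kernels. Put $A_1=\sup_\lambda\|K_\lambda\|_1$.

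\textbf{Step 1: reduction to positive kernels.} For each $\lambda$, write
$$K_\lambda=K_\lambda^{(0)}-K_\lambda^{(2)}+i\bigl(K_\lambda^{(1)}-K_\lambda^{(3)}\bigr),$$
where $K_\lambda^{(0)}=(\operatorname{Re}K_\lambda)_+$, $K_\lambda^{(2)}=(\operatorname{Re}K_\lambda)_-$, and similarly for the imaginary part. Each map $z\mapsto(\operatorname{Re}z)_\pm$, $(\operatorname{Im}z)_\pm$ is $1$-Lipschitz on $\bC$. Hence
$$|K^{(k)}_\lambda(x-y)-K^{(k)}_\lambda(x)|\leq|K_\lambda(x-y)-K_\lambda(x)|,$$
and each of the four families $(K^{(k)}_\lambda)_\lambda$ satisfies \eqref{eq:hormander} with constant $C_H$ and has $\sup_\lambda\|K^{(k)}_\lambda\|_1\leq A_1$. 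Each family now gives positive operators $T^{(k)}_\lambda f=K^{(k)}_\lambda*f$, which is what the interpolation lemma needs. The weak and strong type bounds for $(T_\lambda)$ then follow from those of the four positive families: intersect the four projections for the weak type, and use the triangle inequality in $L_p(\cN;\ell_\infty)$ for the strong type.

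\textbf{Step 2: weak type $(1,1)$ via Theorem~\ref{thm:joint} with $r=\infty$.} For integrable kernels, the off-support representation \eqref{eq:off-support-formula} holds trivially on $\mathscr D$, since $T_\lambda$ is genuine convolution. By \eqref{eq:integrable-kernel-bound},
$$\|(K_\lambda*f)_\lambda\|_{L_\infty(\cN;\ell_\infty)}=\sup_\lambda\|K_\lambda*f\|_\infty\leq A_1\|f\|_\infty,$$
so the hypothesis of Theorem~\ref{thm:joint} holds with $M_\infty=A_1$. This gives weak type $(1,1)$ with constant $C_d(A_1+C_H)$ for any countable subfamily.

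The theorem is stated for a sequence, so an arbitrary index set $\Lambda$ needs an extra argument. Since $L_1(\bR^d)$ is separable, I will pick a countable $\Lambda_0\subset\Lambda$ with $\{K_\lambda\}_{\lambda\in\Lambda_0}$ dense in $\{K_\lambda\}_{\lambda\in\Lambda}$ for the $L_1$ norm. Take the projection $e$ produced for $\Lambda_0$. For any $\lambda\in\Lambda$, choose $\lambda_k\in\Lambda_0$ with $K_{\lambda_k}\to K_\lambda$ in $L_1$. Then $e(K_{\lambda_k}*f)e\to e(K_\lambda*f)e$ in $L_1(\cN)$ by \eqref{eq:integrable-kernel-bound}, and Lemma~\ref{lemma:bounded-closure} gives $\|e(K_\lambda*f)e\|_\infty\leq t$. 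So the same projection works for the whole family.

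\textbf{Step 3: strong type $(p,p)$.} The case $p=\infty$ is the trivial bound above. For $1<p<\infty$, work with one positive family $(T^{(k)}_\lambda)$. On $L_1(\cN)_+ + L_\infty(\cN)_+$ these maps are positive and additive, so Lemma~\ref{lemma:positive-interpolation} applies with $p_0=1$, $p_1=\infty$. It takes the weak type $(1,1)$ from Step~2 and the strong type $(\infty,\infty)$, and returns a strong type $(p,p)$ bound on positive inputs, first for countable subfamilies. The passage to all of $\Lambda$ is again by $L_1$-density of $\Lambda_0$: the bound for the dense countable family controls every $K_\lambda*f$, via the same closure argument or via Remark~\ref{rk:MaxFunct} with a common majorant $a$.

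Finally, a general $f\in L_p(\cN)$ splits into four positive parts, and each of the four kernel families is handled as above. This gives at most sixteen terms, all bounded in $L_p(\cN;\ell_\infty)$.

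\textbf{Main obstacle.} The delicate point is Step~1. Lemma~\ref{lemma:positive-interpolation} is only available for positive (sublinear) maps, and the kernels are complex. The decomposition into positive parts survives only because those parts are Lipschitz images of $K_\lambda$, so the joint H\"ormander condition is inherited. I would check carefully that nothing else in Theorem~\ref{thm:joint} is lost under this splitting; the remaining steps are routine bookkeeping.
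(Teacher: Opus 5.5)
Your proposal is correct and follows the paper's route. The weak type comes from Theorem~\ref{thm:joint} with $r=\infty$ and $M_\infty=\sup_\lambda\|K_\lambda\|_1$, and the strong type from Lemma~\ref{lemma:positive-interpolation} applied to positive kernels that inherit \eqref{eq:hormander} as $1$-Lipschitz images of $K_\lambda$. The differences are minor: the paper uses the single majorant $|K_\lambda|$, the order bounds $\pm\operatorname{Re}(T_\lambda f),\ \pm\operatorname{Im}(T_\lambda f)\leq|K_\lambda|*f\leq U$ for $f\geq0$, and Remark~\ref{rk:MaxFunct}, which avoids your four-part kernel splitting; it also simply assumes $\Lambda=\bN$, whereas you supply the $L_1$-separability and closure argument.
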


\begin{proof}
Without loss of generality, we may assume that
$\Lambda=\bN$ and $f\geq0$.
Since
$$
\sup_n\|T_nf\|_\infty
\leq\left(\sup_n\|K_n\|_1\right)\|f\|_\infty,
$$
Theorem~\ref{thm:joint}, with $r=\infty$, gives
the weak type $(1,1)$ estimate.

Since
$$
\bigl||K_n(x-y)|-|K_n(x)|\bigr|
\leq|K_n(x-y)-K_n(x)|.
$$
Applying Theorem~\ref{thm:joint} and
Lemma~\ref{lemma:positive-interpolation} to
$f\mapsto |K_n|*f$, for $1<p<\infty$, there exists $U\in L_p(\cN)_+$ such that
$$
|K_n|*f\leq U,
\qquad
\|U\|_p\leq C_p\|f\|_p.
$$
As
$$
-U\leq\operatorname{Re}(T_nf)\leq U,
\qquad
-U\leq\operatorname{Im}(T_nf)\leq U,
$$
By Remark~\ref{rk:MaxFunct}, we have
$$
\|(T_nf)_n\|_{L_p(\cN;\ell_\infty)}
\leq2\|U\|_p
\leq2C_p\|f\|_p.
$$
This proves the strong type estimate.
\end{proof}

We give a sufficient condition for the dyadic dilations
of an integrable kernel to satisfy
\eqref{eq:hormander}.
\begin{proposition}\label{prop:dini-hormander}
Let $K\in L_1(\bR^d)$.
Suppose that there exist $A\geq1$ and a nonnegative,
nondecreasing function $\eta$ on $[0,1]$, with
$\eta(0)=0$ and
\(
    \int_0^1\eta(s)\,\frac{ds}{s}<\infty,
\)
such that
\begin{equation}\label{eq:L1 Dini}
    \sup_{y\in Q_0^{(k)}}
    \int_{(AQ_0^{(k)})^c}
    |K(x-y)-K(x)|\,dx
    \leq\eta(2^{-k})
    \qquad(k\geq0),
\end{equation}
and
\begin{equation}\label{eq:size condition}
    \sup_{y\in Q_0^{(k)}}
    \int_{(AQ_0^{(k)})^c}|K(x-y)|\,dx
    \leq\eta(2^k)
    \qquad(k\leq0).
\end{equation}
Then the kernels
\[
    K_j(x)=2^{-jd}K(2^{-j}x),
    \qquad j\in\bZ,
\]
satisfy \eqref{eq:hormander}.
\end{proposition}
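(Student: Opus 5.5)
By Proposition~\ref{prop:hormander-implications} it suffices to verify the dyadic form \eqref{eq:dyadic-hormander} for the family $(K_j)_{j\in\bZ}$, with some constant $A'\geq A$. We will take $A'=A$ and bound, for fixed $k\in\bZ$ and $y\in Q_0^{(k)}$,
\[
\int_{(AQ_0^{(k)})^c}\sup_j|K_j(x-y)-K_j(x)|\,dx
\leq\sum_{j\in\bZ}\int_{(AQ_0^{(k)})^c}|K_j(x-y)-K_j(x)|\,dx .
\]
So we replace the supremum over $j$ by the sum and estimate each term by rescaling. The change of variables $x=2^jx'$ gives
\[
\int_{(AQ_0^{(k)})^c}|K_j(x-y)-K_j(x)|\,dx
=\int_{(AQ_0^{(k+j)})^c}|K(x'-2^{-j}y)-K(x')|\,dx',
\]
since $2^{-j}Q_0^{(k)}=Q_0^{(k+j)}$, and $2^{-j}y\in Q_0^{(k+j)}$.

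Put $m=k+j$ and split the sum according to the sign of $m$.

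\emph{Case $m\geq0$.} Hypothesis \eqref{eq:L1 Dini} bounds the term by $\eta(2^{-m})$.

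\emph{Case $m<0$.} Use the triangle inequality,
\[
|K(x'-y')-K(x')|\leq|K(x'-y')|+|K(x')|,
\]
and apply \eqref{eq:size condition}. The first piece is at most $\eta(2^m)$. The second piece is the case $y'=0$ of \eqref{eq:size condition}, so it is also at most $\eta(2^m)$. Hence the term is at most $2\eta(2^m)$.

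Summing over $j$, i.e.\ over $m\in\bZ$, gives the bound
\[
\sum_{m\geq0}\eta(2^{-m})+2\sum_{m<0}\eta(2^{m})
\leq3\sum_{m\geq0}\eta(2^{-m}),
\]
which is independent of $k$ and $y$. Since $\eta$ is nondecreasing, $\eta(2^{-m})\leq\frac{1}{\log 2}\int_{2^{-m}}^{2^{-m+1}}\eta(s)\,\frac{ds}{s}$ for $m\geq1$. Therefore $\sum_{m\geq1}\eta(2^{-m})\leq C\int_0^1\eta(s)\,ds/s<\infty$, and the $m=0$ term is just $\eta(1)<\infty$. This yields \eqref{eq:dyadic-hormander} with $C_D\leq 3\eta(1)+C\int_0^1\eta(s)\,ds/s$, and Proposition~\ref{prop:hormander-implications} converts it into \eqref{eq:hormander}.

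The main point to be careful about is the bookkeeping in the rescaling: checking that $2^{-j}Q_0^{(k)}=Q_0^{(k+j)}$ and $2^{-j}(AQ_0^{(k)})=AQ_0^{(k+j)}$, and that \eqref{eq:size condition} at $y=0$ legitimately controls $\int_{(AQ_0^{(m)})^c}|K|$. Once the indices are matched, the summation is immediate. No appeal to $\|K\|_1$ is needed beyond ensuring that the kernels are integrable, which is automatic.
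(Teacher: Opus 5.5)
Your proof is correct and follows essentially the same route as the paper. Both arguments rescale each $K_j$ to reduce to the cube $Q_0^{(k+j)}$ and bound $\sup_j$ by the sum over $j$. They then use \eqref{eq:L1 Dini} in the upper range of $k+j$ and \eqref{eq:size condition} twice, including at $y=0$, in the lower range, sum via monotonicity of $\eta$, and finish with Proposition~\ref{prop:hormander-implications}. The only cosmetic difference is that you put the index $k+j=0$ in the Dini range, whereas the paper puts it in the size range.
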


\begin{proof}
Fix $k\in\bZ$ and $y\in Q_0^{(k)}$.
Changing variables $x=2^jz$ gives
\[
    \int_{(AQ_0^{(k)})^c}
    |K_j(x-y)-K_j(x)|\,dx
    =
    \int_{(AQ_0^{(k+j)})^c}
    |K(z-2^{-j}y)-K(z)|\,dz.
\]
By \eqref{eq:L1 Dini} and
\eqref{eq:size condition}, we deduce that
\begin{align*}
    \int_{(AQ_0^{(k)})^c}
    \sup_{j\in\bZ}|K_j(x-y)-K_j(x)|\,dx
    &\leq
    \sum_{j\geq1-k}\eta(2^{-k-j})
    +2\sum_{j\leq-k}\eta(2^{k+j})\\
    &\quad\lesssim
    \eta(1)+\int_0^1\eta(s)\,\frac{ds}{s}.
\end{align*}
Here we used monotonicity of $\eta$ and the fact
\[
    \eta(2^{-r})\log2
    \leq\int_{2^{-r}}^{2^{-r+1}}\eta(s)\,\frac{ds}{s},
    \qquad r\geq1.
\]
Taking the supremum over $k$ and $y$, and applying
Proposition~\ref{prop:hormander-implications},
proves \eqref{eq:hormander}.
\end{proof}

For $K\in L_1(\bR^d)$ and $t>0$, put
$$
K_t(x)=t^{-d}K(t^{-1}x),
\qquad
T_tf=K_t*f.
$$
The conditions in Proposition~\ref{prop:dini-hormander}
allow us to apply Corollary~\ref{thm:main-hormander} to
dyadic dilations.

\begin{corollary}
\label{cor:dyadic for dini}
\label{cor:continuous-dini}
Suppose that $K$ satisfies \eqref{eq:L1 Dini} and
\eqref{eq:size condition}.
Then $(T_{2^j})_{j\in\bZ}$ is of weak type $(1,1)$
and of strong type $(p,p)$ for $1<p\leq\infty$.

If, in addition, $K$ is real-valued and
$r\mapsto K(rx)$ is nonincreasing on $(0,\infty)$
for almost every $x$, the same conclusions hold
for $(T_t)_{t>0}$.
\end{corollary}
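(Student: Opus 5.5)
The first assertion follows directly from earlier results. By Proposition~\ref{prop:dini-hormander}, the dyadic dilations $K_{2^j}=2^{jd}K(2^j\cdot)$ (which form the family $(K_j)$ there after reindexing $j\mapsto -j$) satisfy the joint H\"ormander condition \eqref{eq:hormander}. Moreover, $\|K_{2^j}\|_1=\|K\|_1$ for every $j$. Corollary~\ref{thm:main-hormander}, applied with $\Lambda=\bZ$, then gives weak type $(1,1)$ and strong type $(p,p)$ for $1<p\leq\infty$ for $(T_{2^j})_{j\in\bZ}$.

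For the continuous family under the radial monotonicity hypothesis, I would reduce $(T_t)_{t>0}$ to the dyadic family together with positive averaging operators.

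The first step is to show that the monotonicity, with $K$ real and $K\in L_1$, forces $K\geq0$ almost everywhere. Indeed, if $K(x)<0$, then $K(rx)\leq K(x)<0$ for all $r>1$, and integrating in polar coordinates contradicts integrability. Hence all $K_t\geq0$, and $T_t$ is positive.

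Next, fix $t\in[2^j,2^{j+1})$. Monotonicity gives the pointwise bounds
$$
2^{-d}K_{2^j}(x)\leq K_t(x)\leq 2^dK_{2^{j+1}}(x).
$$
Indeed, $K_t(x)=t^{-d}K(x/t)$, $t^{-d}\leq 2^{-jd}$, and $K(x/t)\leq K(x/2^{j+1})$ since $x/t$ is a larger multiple of $x/2^{j+1}$; the lower bound follows similarly. For $f\geq0$ the upper bound yields the operator inequality
$$
0\leq T_tf\leq 2^dT_{2^{j+1}}f,
$$
because convolution with a nonnegative scalar kernel preserves order on $L_1(\cN)_+$.

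The weak type estimate for $f\geq0$ then follows. Take the projection $e$ given by the dyadic weak type bound at height $2^{-d}\lambda$. Then $0\leq eT_tfe\leq 2^deT_{2^{j+1}}fe\leq\lambda e$, with the trace bound inherited from the dyadic estimate. For the strong type, take $U$ with $T_{2^j}f\leq U$ for all $j$ and $\|U\|_p\lesssim\|f\|_p$; this $U$ exists by Remark~\ref{rk:MaxFunct} since the $T_{2^j}f$ are positive. Then $T_tf\leq 2^dU$ for every $t>0$. For general $f$ I would split into four positive parts as in the proof of Theorem~\ref{prop:exterior-endpoint}, using $T_t(\operatorname{Re}f)=\operatorname{Re}(T_tf)$ since $K_t$ is real, and intersect projections for the weak type.

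The main obstacle is to confirm the positivity of $K$, which makes the order domination legitimate for operator-valued functions. If one instead wanted to allow sign changes, one would have to control $|K_t|$ by a positive majorant satisfying the same hypotheses. I expect the radial monotonicity argument above to be enough, after measure-theoretic care about the exceptional null set of directions.
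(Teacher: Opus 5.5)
Your proposal is correct and follows essentially the same route as the paper. The dyadic case comes from Proposition~\ref{prop:dini-hormander} and Corollary~\ref{thm:main-hormander} via $\|K_{2^j}\|_1=\|K\|_1$. The continuous case uses $K\geq0$ (from ray monotonicity plus integrability) and the domination $0\leq T_tf\leq 2^dT_{2^{j+1}}f$ for $f\geq0$ and $2^j\leq t<2^{j+1}$; your extra details on the weak-type projection, the common majorant and the splitting into positive parts simply spell out what the paper leaves implicit. One minor slip: with the convention $K_t(x)=t^{-d}K(t^{-1}x)$ one has $K_{2^j}=2^{-jd}K(2^{-j}\cdot)$, which is exactly the family in Proposition~\ref{prop:dini-hormander}, so no reindexing is needed.
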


\begin{proof}
Since $\|K_{2^j}\|_1=\|K\|_1$, the dyadic assertion
follows from Proposition~\ref{prop:dini-hormander}
and Corollary~\ref{thm:main-hormander}.
Under the additional assumption, integrability and
monotonicity along rays imply $K\geq0$ almost everywhere.
Without loss of generality, we may assume that $f\geq0$.
For $2^j\leq t<2^{j+1}$,
$$
0\leq K_t\leq2^dK_{2^{j+1}},
\qquad
0\leq T_tf\leq2^dT_{2^{j+1}}f.
$$
The weak and strong type estimates therefore follow
from the dyadic case.
\end{proof}

\begin{remark}\label{remark:singular-kernels}
For comparison, the $L_2$ integral regularity condition
in \cite[Theorem~A]{cadilhaccondeparcet22spectral}
takes the following form for convolution kernels:
\begin{equation}\label{eq:previous-L2-condition}
    \sup_{Q\in\cQ}
    \sum_{m\geq1}\sup_{y\in Q}
    \left(
        (2^m\ell(Q))^d
        \int_{\substack{2^m\ell(Q)\leq|x-c_Q|\\
                        |x-c_Q|\leq2^{m+1}\ell(Q)}}
        |K(x-y)-K(x-c_Q)|^2\,dx
    \right)^{1/2}<\infty.
\end{equation}
The following kernels satisfy \eqref{eq:L1 Dini}
and \eqref{eq:size condition}, but fail
\eqref{eq:previous-L2-condition}.

\smallskip
\noindent\textup{(i)}
For $0<\varepsilon\leq1/2$,
$$
K(x)=(1-|x|^2)^{\varepsilon-1}
     \dsone_{\{|x|<1\}}(x).
$$
In dimension one, this gives
$K(x)=(1-x^2)^{\varepsilon-1}\dsone_{(-1,1)}(x)$.

\smallskip
\noindent\textup{(ii)}
For $a=(1,0,\ldots,0)$ and $d/2\leq\alpha<d$,
$$
K(x)=|x-a|^{-\alpha}
     \dsone_{\{0<|x-a|<1/4\}}(x).
$$

\smallskip
\noindent\textup{(iii)}
For $1/2\leq\alpha<1$,
$$
K(x)=|x_1-1|^{-\alpha}
     \dsone_{\{0<|x_1-1|<1/4\}}
     \prod_{i=2}^d\dsone_{\{|x_i|<1/4\}}.
$$

\smallskip
\noindent\textup{(iv)}
In dimension one, for $\beta>2$,
$$
K(x)=
\frac{\dsone_{\{0<|x-1|<1/4\}}}
{|x-1|\bigl(\log(e/|x-1|)\bigr)^\beta},
$$
with $K(1)=0$.

Each kernel has a singularity away from the origin
that is not locally square integrable. For sufficiently
small $h=(t,0,\ldots,0)$ with $t>0$,
$K(\,\cdot-h)-K$ is not square integrable on a
bounded region separated from the origin.
Thus an annular integral in
\eqref{eq:previous-L2-condition} is infinite.
\end{remark}

We now combine the dyadic maximal estimate with
the noncommutative Banach principle to obtain
bilateral almost uniform convergence.

\begin{corollary}\label{cor:approximate-identity}
Let $K\in L_1(\bR^d)$ satisfy \eqref{eq:L1 Dini}
and \eqref{eq:size condition}, and put
$c_K=\int_{\bR^d}K$.
For every $f\in L_1(\cN)$,
\begin{equation}\label{eq:dyadic-approximation}
    K_{2^{-n}}*f\longrightarrow c_Kf
    \quad\text{in }L_1(\cN)\text{ and b.a.u.}
    \qquad(n\to\infty).
\end{equation}
If $K$ has compact support, then the convergence
also holds locally b.a.u. for every
$f\in L_{1,\mathrm{loc}}(\bR^d;L_1(\cM))$.
\end{corollary}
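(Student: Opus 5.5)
The plan is the standard ``maximal inequality on a dense class plus noncommutative Banach principle'' scheme. Corollary~\ref{cor:dyadic for dini} shows that $(T_{2^j})_{j\in\bZ}$, and in particular $(K_{2^{-n}}*\cdot)_{n\geq1}$, is of weak type $(1,1)$ on $L_1(\cN)$. It therefore suffices to prove convergence on a dense subclass and then transfer it to all of $L_1(\cN)$.

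\emph{Convergence in $L_1(\cN)$.} Write
$$
K_{2^{-n}}*f-c_Kf=\int_{\bR^d}K(y)\bigl(f(\cdot-2^{-n}y)-f\bigr)\,dy.
$$
The translation $y\mapsto f(\cdot-y)$ is continuous in $L_1(\cN)=L_1(\bR^d;L_1(\cM))$. Dominated convergence for the Bochner integral, with the bound $2\|f\|_1|K(y)|$, then gives $\|K_{2^{-n}}*f-c_Kf\|_1\to0$.

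\emph{b.a.u.\ convergence on a dense class.} For $f=\sum_k u_k\otimes a_k\in\mathscr D$, we have $K_{2^{-n}}*f-c_Kf=\sum_k(K_{2^{-n}}*u_k-c_Ku_k)\otimes a_k$. The scalar functions $K_{2^{-n}}*u_k$ converge to $c_Ku_k$ uniformly on $\bR^d$, because $u_k$ is uniformly continuous and bounded and $K\in L_1$. Hence $\|K_{2^{-n}}*f-c_Kf\|_\infty\to0$, which is uniform convergence, so b.a.u.\ convergence holds with $e=1$.

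\emph{Extension to $L_1(\cN)$.} This is the main step. I would invoke the noncommutative Banach principle in the form of \cite[Remark~6.5]{jungexu07ergodic} (or \cite[Lemma~6.2]{hongwangliao2017noncommutative}): a family of linear maps of weak type $(1,1)$ in the sense of $\Lambda_{1,\infty}(\cN;\ell_\infty)$ has a closed set of b.a.u.-convergence in $L_1$. If one wants to argue directly, the steps are these. Given $f$ and $\varepsilon>0$, choose $g_k\in\mathscr D$ with $\|f-g_k\|_1\leq 4^{-k}\varepsilon$. Apply the weak type estimate at height $2^{-k}$ to obtain projections $e_k$ with $\sup_n\|e_k(K_{2^{-n}}*(f-g_k))e_k\|_\infty\leq2^{-k}$ and $\varphi(1-e_k)\lesssim2^{k}4^{-k}\varepsilon$. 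Use Chebyshev to get a cut for $c_K(f-g_k)$ of the same size. Finally take $e=\bigwedge_k e_k\wedge(\text{these cuts})$, so that $\varphi(1-e)\lesssim\varepsilon$. The uniform limit on $e$ then follows by a $3\epsilon$-argument.

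One subtlety is that the weak type estimate bounds $eT(f)e$ only for a compression by a single projection. Since $e\leq e_k$, we have $\|eXe\|\leq\|e_kXe_k\|$, so the estimates survive the intersection. This is where the care lies, but it is standard.

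\emph{Local version for $K$ compactly supported.} Suppose $\supp K\subseteq B(0,R)$ and $f\in L_{1,\mathrm{loc}}(\bR^d;L_1(\cM))$. Fix a bounded set $E$ and put $\tilde f=f\dsone_{E+B(0,R)}\in L_1(\cN)$. For $n\geq0$, $K_{2^{-n}}*f=K_{2^{-n}}*\tilde f$ on $E$, and $f=\tilde f$ on $E$. Apply the global result to $\tilde f$ and compress the resulting projection $e$ by $p_E$. Because $p_E$ is central-like (it commutes with everything in $L_\infty(\bR^d)\otimes 1$ multiplication), taking $e'=e\wedge p_E$, or equivalently $p_Eep_E$ after a spectral cut, gives $\varphi(p_E-e')\leq\varphi(1-e)$ and the desired uniform convergence.
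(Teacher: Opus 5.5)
Your proposal is correct and takes essentially the same route as the paper. Both arguments get $L_1$ convergence from continuity of translation and operator-norm convergence on $\mathscr D$. Both then pass the weak type $(1,1)$ bound from Corollary~\ref{cor:dyadic for dini} through the noncommutative Banach principle; the paper cites \cite[Theorem~3.4]{chilinlitvinov06equicontinuity}, and your direct $\bigwedge_k e_k$ argument reproves what is needed. Finally, both localize by truncating $f$ to a neighbourhood of $E$. In the local step, $p_E=\dsone_E\otimes1$ is in fact central in $\cN$, so $e\wedge p_E=ep_E$ and no further spectral cut is needed.
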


\begin{proof}
Firstly, we have
$$
\|K_{2^{-n}}*f-c_Kf\|_1
\leq
\int_{\bR^d}|K(y)|
\|f(\,\cdot-2^{-n}y)-f\|_1\,dy
\longrightarrow0.
$$
And for $g=\sum_{i=1}^m u_i\otimes a_i\in\mathscr D$,
we also have
$$
\begin{aligned}
\|K_{2^{-n}}*g-c_Kg\|_\infty
\leq
\sum_{i=1}^m\|a_i\|_\infty
\int_{\bR^d}|K(y)|
\|u_i(\,\cdot-2^{-n}y)-u_i\|_\infty\,dy\longrightarrow0.
\end{aligned}
$$
By Corollary~\ref{cor:dyadic for dini}, the operators
$f\mapsto K_{2^{-n}}*f$, $n\geq0$, satisfy a weak
type $(1,1)$ estimate and are therefore bilaterally
uniformly equicontinuous at zero.
By the noncommutative Banach principle
\cite[Theorem~3.4]{chilinlitvinov06equicontinuity},
the set
$$
\left\{
    f\in L_1(\cN):
    (K_{2^{-n}}*f)_{n\geq0}
    \text{ converges b.a.u.}
\right\}
$$
is closed in $L_1(\cN)$.
It contains the dense subspace $\mathscr D$ and
therefore equals $L_1(\cN)$.
The convergence in $L_1(\cN)$ identifies the
limit as $c_Kf$.

Suppose now that $\supp K\subseteq[-R_0,R_0]^d$.
For $R>0$ and $n\geq0$,
$$
\dsone_{[-R,R]^d}(K_{2^{-n}}*f)
=
\dsone_{[-R,R]^d}
\left(
K_{2^{-n}}*
\bigl(\dsone_{[-R-R_0,R+R_0]^d}f\bigr)
\right).
$$
The truncated function belongs to $L_1(\cN)$.
Applying the global assertion and restricting the
resulting projections to $[-R,R]^d$ proves local
b.a.u. convergence.
\end{proof}
In particular, the compactly supported kernels in
Remark~\ref{remark:singular-kernels} satisfy both
the global and local convergence conclusions.

\subsection{Monotone $L\log L$ majorants}
\label{sec:logl}
Throughout this subsection, we work in dimension one
and write $\cN=L_\infty(\bR)\overline\otimes\cM$.
The following result extends Z\'o's monotone-kernel theorem
\cite[Theorem~3, pp.~116--117]{zo76appro}.

Let $L:(0,1)\to[0,\infty)$ be nondecreasing and satisfy
$$
\int_0^1 L(t)\log(1+L(t))\,dt<\infty.
$$
Extend $L$ by zero outside $(0,1)$, and let
$K:\bR\to\bC$ be measurable with $|K|\leq L$
almost everywhere.
For a sequence of positive numbers
$(\lambda_n)_{n\geq1}$, define
$$
K_n(t)=\lambda_nK(\lambda_nt),
\qquad
T_nf=K_n*f,
\qquad
c_K=\int_\bR K(t)\,dt.
$$

\begin{theorem}\label{thm:logl}
Suppose that $\lambda_n\leq q\lambda_{n+1}$
for some $0<q<1$ and every $n\geq1$.
Then $(T_n)_{n\geq1}$ is of weak type $(1,1)$
and of strong type $(p,p)$ for $1<p\leq\infty$,
with constants depending only on $L,p,q$.
Moreover, for $f\in L_1(\cN)$, 
$$
T_nf\longrightarrow c_Kf
\quad\text{in }L_1(\cN)\text{ and b.a.u.}.
$$
For $f\in L_{1,\mathrm{loc}}(\bR;L_1(\cM))$,
the convergence holds locally b.a.u.
\end{theorem}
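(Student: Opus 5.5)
The plan is to reduce to Corollary~\ref{thm:main-hormander} by verifying, for the family $(K_n)_{n\geq1}$, the uniform bound $\sup_n\|K_n\|_1=\|K\|_1\leq\int_0^1L<\infty$ and the joint H\"ormander condition \eqref{eq:hormander}. Since $|K|\leq L$ does not give regularity of $K$ itself, the H\"ormander integrand cannot be estimated through differences of $K$. I will bound it crudely by sizes instead:
\[
\sup_n|K_n(x-y)-K_n(x)|\leq\sup_n\lambda_nL(\lambda_n|x-y|)+\sup_n\lambda_nL(\lambda_n|x|).
\]
Here I read $L$ on $|t|$, since $L$ was extended by zero outside $(0,1)$. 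The real content is then a Z\'o-type estimate: for $|x|\geq2|y|$, the point $x-y$ satisfies $|x-y|\geq|x|/2$. So it suffices to show
\[
\int_{|x|>s}\sup_n\lambda_nL(\lambda_n|x|)\,dx\leq C_{L,q}
\]
uniformly in $s>0$. Indeed, the $x-y$ term, after the change of variables, is controlled by the same integral over $|z|\geq|y|$.

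To prove this lacunary estimate, fix $x$. Only indices with $\lambda_n|x|<1$ contribute, i.e.\ $\lambda_n<1/|x|$. The lacunarity $\lambda_n\leq q\lambda_{n+1}$ makes $(\lambda_n)$ increasing geometrically, so I bound the supremum by the sum
\[
\sum_{n:\lambda_n<1/|x|}\lambda_nL(\lambda_n|x|).
\]
Integrating over all of $\bR$ termwise gives $\sum_n 2\int_0^1L$, which diverges. I therefore have to use the monotonicity of $L$ and the $L\log L$ hypothesis, following Z\'o's classical argument, which I take as the scalar input (as the paper says it will do for Z\'o's estimates). Concretely, I group $x$ dyadically, $|x|\sim2^{-k}$. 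For such $x$ the terms with $\lambda_n\sim2^{m}$, $m<k$, are bounded by $2^mL(2^{m-k+1})$, since $L$ is nondecreasing and each dyadic block contains boundedly many $\lambda_n$ (depending on $q$). Summing over $k$ gives the standard majorant $\sum_{j\geq0}\#\{\text{relevant }n\}\,2^{-j}L(2^{-j})$-type quantities. The condition $\int L\log(1+L)<\infty$ is exactly what makes the rearranged double sum finite: one compares the count of $j$ with $L(2^{-j})>2^{i}$ against $\log L$. I expect this bookkeeping to be the main obstacle. If it becomes too messy, I will simply cite Z\'o \cite[Theorem~3]{zo76appro} for the scalar fact that $\sup_n|K_n(x-y)-K_n(x)|$ satisfies \eqref{eq:hormander}, since that statement is scalar and transfers verbatim.

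With \eqref{eq:hormander} established, Corollary~\ref{thm:main-hormander} gives weak type $(1,1)$ and strong type $(p,p)$ for $1<p\leq\infty$. The constants depend on $\|K\|_1\leq\int L$ and $C_H=C_{L,q}$, hence only on $L,p,q$. For convergence I repeat the proof of Corollary~\ref{cor:approximate-identity}. Since $\lambda_n\to\infty$, $L_1$ convergence to $c_Kf$ follows from continuity of translation, and uniform convergence follows on the dense class $\mathscr D$. The weak type $(1,1)$ bound gives bilateral uniform equicontinuity, so the noncommutative Banach principle \cite[Theorem~3.4]{chilinlitvinov06equicontinuity} yields b.a.u.\ convergence on $L_1(\cN)$. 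Finally, $\supp K_n\subseteq[-1/\lambda_1,1/\lambda_1]$, so the truncation argument at the end of Corollary~\ref{cor:approximate-identity} gives local b.a.u.\ convergence for $f\in L_{1,\mathrm{loc}}(\bR;L_1(\cM))$.
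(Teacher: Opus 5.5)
\textbf{There is a genuine gap in your verification of \eqref{eq:hormander}.} The estimate you reduce everything to,
\[
\int_{|x|>s}\sup_n\lambda_nL(\lambda_n|x|)\,dx\leq C_{L,q}\qquad\text{uniformly in }s>0,
\]
is false for every $L\not\equiv0$, so no bookkeeping with monotonicity or the $L\log L$ hypothesis can produce it.

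Already for $L=\dsone_{(0,1)}$ and $\lambda_n=2^n$ one has $\sup_n2^n\dsone_{(0,1)}(2^n|x|)\geq1/(2|x|)$ for $0<|x|<1/2$. So the left-hand side is at least $\log(1/(2s))$. In general, if $L(\delta)>0$, each shell $\delta/\lambda_n<|x|<1/\lambda_n$ contributes $2\int_\delta^1L>0$, and lacunarity bounds the overlap of the shells. The left-hand side is therefore at least a positive multiple of $\#\{n:\lambda_n<\delta/s\}$, which is unbounded as $s\to0$. The maximal kernel is not integrable at the origin. Condition \eqref{eq:hormander} can only hold through the cancellation in the differences, and your triangle inequality throws that cancellation away.

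More fundamentally, \eqref{eq:hormander} need not hold for $(K_n)$ at all, since $K$ is only assumed measurable with $|K|\leq L$. Take $\lambda_n=2^n$ and $y=2^{-N}$, and keep only the index $n$ on $2^{-n-1}<x<2^{-n}$. This gives
\[
\int_{|x|>2|y|}\sup_m|K_m(x-y)-K_m(x)|\,dx\geq\sum_{k=2}^{N-1}\int_{1/2}^{1}|K(u-2^{-k})-K(u)|\,du .
\]
The right-hand side is unbounded in $N$ for any bounded $K$ supported in $(1/2,1)$ with $\sum_k\|K(\cdot-2^{-k})-K\|_{L_1}=\infty$, and such $K$ exist. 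So your fallback citation is also misplaced: Z\'o's scalar estimate concerns the monotone majorants $g_n(t)=\lambda_nL(\lambda_nt)$, not $K_n$.

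The missing idea is domination, which is what the paper uses.
\begin{enumerate}
\item Verify \eqref{eq:hormander} for $(g_n)$. Here your sup-by-sum idea does work, applied to differences. By monotonicity of $L$,
\[
\int_{|x|>2|y|}|g_n(x-y)-g_n(x)|\,dx\leq2\int_{1-\lambda_n|y|}^1L
\]
when $\lambda_n|y|<1$, and the integral vanishes otherwise. Lacunarity bounds $\#\{n:1-u<\lambda_n|y|<1\}$ by $C_q\bigl(1+\log\frac1{1-u}\bigr)$. The $L\log L$ hypothesis makes this integrable against $L$.
\item Since $\|g_n\|_1=\|L\|_1$, Corollary~\ref{thm:main-hormander} gives both maximal estimates for $S_nf=g_n*f$.
\item Transfer to $T_n$. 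For $f\geq0$, split $K_n$ into four kernels with $0\leq K_n^{(i)}\leq g_n$, so that $0\leq K_n^{(i)}*f\leq g_n*f$. A projection $e$ with $e(g_n*f)e\leq\lambda e$ for all $n$ then gives $\sup_n\|eT_n(f)e\|_\infty\leq4\lambda$. A common majorant of $(g_n*f)_n$ gives the strong type via Remark~\ref{rk:MaxFunct}.
\end{enumerate}
Your convergence and localization argument is fine and agrees with the paper's.
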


\begin{proof}
The scalar estimate in the proof of
\cite[Theorem~3, pp.~116--117]{zo76appro} gives
$$
\sup_{y\ne0}\int_{|x|\geq2|y|}
\sup_{n\geq1}|g_n(x-y)-g_n(x)|\,dx
\leq C_{L,q},
\qquad
g_n(t)=\lambda_nL(\lambda_nt).
$$
Since $\|g_n\|_1=\|L\|_1$,
Corollary~\ref{thm:main-hormander} gives the weak
and strong maximal estimates for $S_nf=g_n*f$.
Since $|K_n|\leq g_n$, the weak and strong maximal
estimates for $(T_n)_n$ follow from those for $(S_n)_n$
by the same domination argument as in the proof of
Corollary~\ref{thm:main-hormander}.

On the other hand, since $\lambda_n\to\infty$, we have
$$
\|T_nf-c_Kf\|_1
\leq
\int_0^1|K(u)|
\|f(\,\cdot-u/\lambda_n)-f\|_1\,du
\longrightarrow0.
$$
For $f\in\mathscr D$, the analogous estimate in
operator norm also tends to zero.
The weak maximal estimate and the Banach-principle
argument in Corollary~\ref{cor:approximate-identity}
therefore give b.a.u. convergence for every
$f\in L_1(\cN)$.

Finally, $\supp K_n\subseteq[0,\lambda_1^{-1}]$
for all $n$. On a bounded interval $I$, one may
replace a locally integrable $f$ by its restriction
to $[\inf I-\lambda_1^{-1},\sup I]$.
The localization argument in
Corollary~\ref{cor:approximate-identity}
then gives local b.a.u. convergence.
\end{proof}

\begin{remark}\label{remark:endpoint-kernel}
The one-dimensional endpoint kernel from
Remark~\ref{remark:singular-kernels},
$$
K(x)=(1-x^2)^{\varepsilon-1}\dsone_{(-1,1)}(x),
\qquad 0<\varepsilon<1,
$$
also gives an application of Theorem~\ref{thm:logl};
see \cite[Chapter~II, \S5.16]{stein93bookharmonic}
for the classical example.
Indeed, the restriction of $K$ to $(0,1)$,
$$
L(t)=(1-t^2)^{\varepsilon-1},
\qquad 0<t<1,
$$
is nondecreasing and belongs to $L\log L(0,1)$.
Extend $L$ by zero outside $(0,1)$.
Since $K(x)=L(x)+L(-x)$, Theorem~\ref{thm:logl}
and the change of variables $x\mapsto -x$ give
maximal inequalities and global and local b.a.u.
convergence for the convolution operators with kernels
$\lambda_nK(\lambda_n\,\cdot)$, along every sequence
$(\lambda_n)$ satisfying the assumptions of that theorem.
\end{remark}

\subsection{Nilpotent groups and Poisson kernels}
\label{subsec:nilpotent-poisson}

We first prove a logarithmic estimate for displaced
interval averages. We then apply this estimate along
one-parameter subgroups to obtain maximal inequalities
on nilpotent groups and for Poisson integrals on
symmetric spaces.

For the interval estimate, let
$\cN=L_\infty(\bR)\overline\otimes\cM$.
For $b\in\bR$ and $\ell>0$, define
$$
B_k^{b,\ell}f(t)
=\frac1\ell
 \int_{b-\ell/2}^{b+\ell/2}
 f(t-e^{-k}s)\,ds,
\qquad k\in\bZ.
$$
The following estimate is the noncommutative version
of the classical displaced-interval estimate; see
\cite[Chapter~II, \S5.10, p.~78]{stein93bookharmonic}
and
\cite[Lemma~4, pp.~340--341]
{carlssonsjogrenstromberg85multiparameter}.

\begin{lemma}\label{lemma:displaced-exponential}
For every $1<p<\infty$ and $f\in L_p(\cN)_+$,
there exists $g\in L_p(\cN)_+$ such that
\begin{equation}\label{eq:displaced-exponential-majorant}
    B_k^{b,\ell}f\leq g
    \quad(k\in\bZ),
    \qquad
    \|g\|_p
    \leq
    C_p\bigl(1+\log(2+|b|/\ell)\bigr)^{1/p}\|f\|_p.
\end{equation}
\end{lemma}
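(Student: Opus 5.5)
We will deduce the lemma from Theorem~\ref{thm:joint} (with $r=\infty$) followed by the noncommutative Marcinkiewicz interpolation of Lemma~\ref{lemma:positive-interpolation}. The point is that the joint $L_1$-H\"ormander constant of the displaced family grows only logarithmically in $|b|/\ell$, whereas direct Hardy--Littlewood domination would lose a factor linear in $|b|/\ell$.

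\emph{Reduction and kernels.} Writing $u=e^{-k}s$, we have $B_k^{b,\ell}f=K_k*f$ with
$$K_k(u)=\frac{e^k}{\ell}\dsone_{[b-\ell/2,\,b+\ell/2]}(e^ku),\qquad \|K_k\|_1=1.$$
The dilation $f\mapsto f(c\,\cdot)$ is an isometry of $L_p(\cN)$ up to the factor $c^{-1/p}$, and it conjugates $(B_k^{b,\ell})_k$ to $(B_k^{b/c,\ell/c})_k$. Taking $c=\ell$, we may therefore assume $\ell=1$. The reflection $t\mapsto-t$ replaces $b$ by $-b$, so we may also assume $b\geq0$.

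\emph{Joint H\"ormander constant.} Fix $y\neq0$. The function $|K_k(x-y)-K_k(x)|$ has total mass at most $2\min\{1,e^k|y|\}$, and it is supported in $e^{-k}[b-1/2,b+1/2]+[-|y|,|y|]$.
\begin{itemize}
\item[(a)] The indices with $e^k|y|\leq1$ contribute at most $\sum 2e^k|y|\leq C$, by a geometric series.
\item[(b)] For the indices with $e^k|y|>1$, the support must meet $\{|x|>2|y|\}$. This forces $e^{-k}(b+1/2)+|y|>2|y|$, that is, $e^k<(b+1/2)/|y|$. Together with $e^k>1/|y|$, this leaves at most $\log(b+1/2)+C$ admissible values of $k$, each contributing at most $2$.
\end{itemize}
Bounding $\sup_k$ by $\sum_k$, we obtain \eqref{joint hormander} with $C_H\leq C(1+\log(2+b))$. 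Checking the count in case (b) carefully, including the case of small $b$ where the interval straddles the origin, is the main step. There I will also verify that when $b<1/2$ no $k$ with $e^k|y|>1$ survives beyond an $O(1)$ number.

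\emph{Conclusion.} Since $\sup_k\|K_k*f\|_\infty\leq\|f\|_\infty$, the hypothesis of Theorem~\ref{thm:joint} holds with $r=\infty$ and $M_\infty=1$. Theorem~\ref{thm:joint} then gives a weak type $(1,1)$ bound for $(B_k^{b,1})_k$ with constant $B_0\leq C(1+\log(2+b))$, while the trivial strong $(\infty,\infty)$ bound holds with $B_1=1$. The operators are linear and positive, so the subadditivity hypothesis of Lemma~\ref{lemma:positive-interpolation} holds. Applying that lemma with $p_0=1$, $p_1=\infty$ and $1-\theta=1/p$ gives, for $f\geq0$,
$$\|(B_kf)_k\|_{L_p(\cN;\ell_\infty)}\leq C(1-1/p)^{-2}B_0^{1/p}\|f\|_p\leq C_p(1+\log(2+b))^{1/p}\|f\|_p .$$
Finally, Remark~\ref{rk:MaxFunct} for positive families turns this norm bound into a majorant $g\in L_p(\cN)_+$ with $B_k^{b,\ell}f\leq g$ for all $k$ and $\|g\|_p$ within a factor $2$ of the bound above. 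Undoing the dilation and the reflection yields \eqref{eq:displaced-exponential-majorant}.
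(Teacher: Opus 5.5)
Your proposal is correct and follows the paper's route: bound the joint H\"ormander constant of the displaced interval kernels by $C(1+\log(2+|b|/\ell))$, apply Theorem~\ref{thm:joint} with $r=\infty$ and $M_\infty=1$, then use Lemma~\ref{lemma:positive-interpolation} with $p_0=1$, $p_1=\infty$ to get the factor $(1+\log(2+|b|/\ell))^{1/p}$. The only differences are that you carry out the kernel count the paper delegates to Stein (after normalizing to $\ell=1$, $b\geq0$), and you spell out via Remark~\ref{rk:MaxFunct} how to extract the majorant $g$, which the paper leaves implicit.
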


\begin{proof}
The operators $B_k^{b,\ell}$ are convolutions with
the normalized interval kernels
$$
K_k(x)
=\frac{e^k}{\ell}
 \dsone_{[e^{-k}(b-\ell/2),\,e^{-k}(b+\ell/2)]}(x).
$$
The interval-kernel argument in
\cite[Chapter~II, \S5.10, p.~78]{stein93bookharmonic}
applies also to the scales $e^{-k}$ and gives
$$
\sup_{y\ne0}\int_{|x|\geq2|y|}
\sup_{k\in\bZ}|K_k(x-y)-K_k(x)|\,dx
\leq C\bigl(1+\log(2+|b|/\ell)\bigr).
$$
Since $\|K_k\|_1=1$, Theorem~\ref{thm:joint},
with $r=\infty$, gives the weak type $(1,1)$ estimate.
Lemma~\ref{lemma:positive-interpolation} then gives
\eqref{eq:displaced-exponential-majorant}.
\end{proof}

We now follow the cube decomposition and summation in
\cite[Proposition~5.1, pp.~325--326]{sjogren86admissible}.
Let $N$ be a connected, simply connected nilpotent
Lie group. Choose a basis $X_1,\ldots,X_d$ of its
Lie algebra such that
$\operatorname{span}\{X_1,\ldots,X_i\}$ is an ideal
for every $i$.
Following \cite[p.~323]{sjogren86admissible}, use
the canonical coordinates
$$
n=\exp(x_1X_1)\cdots\exp(x_dX_d),
\qquad
dn=dx_1\cdots dx_d,
$$
and define
$$
\delta_h(x_1,\ldots,x_d)
=(e^{h_1}x_1,\ldots,e^{h_d}x_d),
\qquad h\in\bZ^d.
$$
These coordinate dilations need not be group
automorphisms.
Put
$\cA=L_\infty(N)\overline\otimes\cM$,
with trace $\int_Ndn\otimes\tau$.

Let $P:N\to(0,\infty)$ be such that $1/P$ is a strictly
positive polynomial in the canonical coordinates and
$$
P(n)\leq C\min\{1,|x_1|^{-\eta},\ldots,|x_d|^{-\eta}\}
$$
for some $C,\eta>0$. Assume also that there exists
$\beta_0>0$ such that $P^\beta\in L_1(N)$ whenever
$\beta>\beta_0$.
Fix $\beta>\beta_0$ and define
$$
T_hF(g)
=\int_N
 F\bigl(g(\delta_{-h}n)^{-1}\bigr)P(n)^\beta\,dn,
\qquad h\in\bZ^d.
$$
Since $P^\beta\in L_1(N)$ and right translations
preserve Haar measure, these integrals define bounded
operators on $L_p(\cA)$ for $1\leq p<\infty$.
The following proposition gives a maximal inequality
for the family $(T_h)_{h\in\bZ^d}$.

\begin{proposition}\label{prop:nilpotent-poisson}
For every $1<p<\infty$ and $F\in L_p(\cA)$,
$$
\|(T_hF)_{h\in\bZ^d}\|_{L_p(\cA;\ell_\infty(\bZ^d))}
\leq C_{p,N,P,\beta}\|F\|_p.
$$
Moreover, if $F\geq0$, there exists $U\in L_p(\cA)_+$
such that
$$
T_hF\leq U\quad(h\in\bZ^d),
\qquad
\|U\|_p\leq C_{p,N,P,\beta}\|F\|_p.
$$
\end{proposition}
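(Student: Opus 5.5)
The plan is to follow Sjögren's cube decomposition \cite[Proposition~5.1]{sjogren86admissible}, replacing scalar maximal functions by positive majorants in $L_p(\cA)_+$. The majorants come from Lemma~\ref{lemma:displaced-exponential}, applied one coordinate at a time. The first reduction is to $F\geq0$. Since $P^\beta\geq0$, each $T_h$ is positive, so the majorant statement implies the $L_p(\cA;\ell_\infty)$ bound by Remark~\ref{rk:MaxFunct}, after splitting a general $F$ into four positive parts. It therefore suffices to build $U$.

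\textbf{Step 1: decomposition of the kernel.} Decompose $N$, in canonical coordinates, into the unit cubes $I_m=m+[-1/2,1/2)^d$, $m\in\bZ^d$. On each $I_m$ the size bound gives
$$
P^\beta\leq C\min\{1,|m_1|^{-\eta\beta},\ldots,|m_d|^{-\eta\beta}\}=:c_m .
$$
The integrability of $P^\beta$ for every $\beta>\beta_0$ should let us arrange, as in Sjögren, that
$$
\sum_m c_m\prod_i\bigl(1+\log(2+|m_i|)\bigr)<\infty .
$$
Concretely, one replaces $\beta$ by a slightly smaller $\beta'>\beta_0$ and uses $P^{\beta}=P^{\beta'}P^{\beta-\beta'}$, where the extra factor $P^{\beta-\beta'}$ supplies polynomial decay in every coordinate. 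Then
$$
T_hF\leq\sum_m c_m\,A_h^{m}F,
\qquad
A_h^{m}F(g)=\int_{I_m}F\bigl(g(\delta_{-h}n)^{-1}\bigr)\,dn .
$$

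\textbf{Step 2: factor $A_h^m$ into one-parameter averages.} Write $(\delta_{-h}n)^{-1}=\exp(-e^{-h_d}x_dX_d)\cdots\exp(-e^{-h_1}x_1X_1)$. Then $A_h^m$ is the iterated composition over $i$ of operators of the form
$$
F\mapsto\int_{m_i-1/2}^{m_i+1/2}F\bigl(g\exp(-e^{-h_i}sX_i)\bigr)\,ds .
$$
Each of these is an action of $\bR$ on $L_p(\cA)$ by right translation along a one-parameter subgroup; the action is trace-preserving because $N$ is unimodular. For such an action the operator is the transferred version of $B^{m_i,1}_{h_i}$. By Lemma~\ref{lemma:positive-transference} together with Lemma~\ref{lemma:displaced-exponential}, applied first in the weak/strong form on $\bR$ and then transferred, each one-parameter family $(h_i\in\bZ)$ has a positive majorant. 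The bound is
$$
C_p\bigl(1+\log(2+|m_i|)\bigr)^{1/p}\|\cdot\|_p .
$$
The nested ideal structure is what makes the iterated composition exact in these coordinates.

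\textbf{Step 3: iterate the one-parameter majorants.} The $h_i$ vary independently, so we need a majorant for the iterated composition that is uniform in $h\in\bZ^d$. Positivity lets us proceed coordinate by coordinate. First take $U_1$ majorizing the innermost family over $h_1$. If $S$ is a positive operator with $S(F)\leq S(G)$ whenever $F\leq G$, and $F_{h_1}\leq U_1$, then the next averaging in $h_2$ applied to $F_{h_1}$ is bounded by the same averaging applied to $U_1$. That family, indexed by $h_2$, is in turn majorized by some $U_2$ via Step 2. After $d$ steps we obtain $U^m$ with
$$
A_h^mF\leq U^m,
\qquad
\|U^m\|_p\leq C_p^d\prod_i\bigl(1+\log(2+|m_i|)\bigr)^{1/p}\|F\|_p .
$$
Finally set $U=\sum_m c_mU^m$; its norm is summable by Step 1. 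The ordering issue, namely that the left and right positions of the translates depend on which factor is innermost, is handled by choosing the inner variable to match the group multiplication order.

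\textbf{Main obstacle.} I expect the hardest point to be Step 2: verifying that the coordinate dilation $\delta_{-h}$ together with the group product really yields an exact iterated composition of one-parameter translations. The dilations are not automorphisms, and in the canonical coordinates the product $g\cdot(\delta_{-h}n)^{-1}$ mixes coordinates. My first attempt would be to express it directly as successive right translations $g\exp(-y_dX_d)\cdots\exp(-y_1X_1)$, so that each factor acts on the output of the previous averaging. If the endpoints become nonconstant, a fallback is to enlarge $I_m$ to a slightly larger box, exactly as in Sjögren.
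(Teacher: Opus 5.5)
Your reduction to $F\ge0$ and your Steps~2--3 are fine and match the paper. The box average factors exactly: $(\exp(x_1X_1)\cdots\exp(x_dX_d))^{-1}=\exp(-x_dX_d)\cdots\exp(-x_1X_1)$, the box is a product set, and $dn=dx_1\cdots dx_d$, so Fubini gives the iterated composition with no enlargement needed. The coordinate-by-coordinate propagation of majorants through positive maps is exactly the paper's argument.

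\textbf{The gap is Step~1.} With $c_m$ taken from the size bound, $c_m\approx|m|_\infty^{-\eta\beta}$, and $\sum_{m\in\bZ^d}c_m<\infty$ requires $\eta\beta>d$, which is not assumed. Writing $P^\beta=P^{\beta'}P^{\beta-\beta'}$ does not help. It only reduces matters to $\sum_m\sup_{I_m}P^{\beta'}<\infty$, whereas integrability of $P^{\beta'}$ controls $\sum_m\int_{I_m}P^{\beta'}$. There is no Harnack-type comparison between these two quantities on unit cubes far from the origin, because the level sets of $P$ can be thin there.

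Concretely, on $N=\bR^2$ let $1/P=1+x_1^2+(x_2-x_1^3)^2$. Then $P\le C\min\{1,|x_1|^{-2/3},|x_2|^{-2/3}\}$, and $P^\beta\in L_1$ if and only if $\beta>1$. But for $x_1\in[k,k+1]$ the curve $x_2=x_1^3$ meets $\gtrsim k^2$ unit cubes, on each of which $\sup P^\beta\gtrsim k^{-2\beta}$. Hence $\sum_m\sup_{I_m}P^\beta=\infty$ for $1<\beta\le3/2$, and no choice of constants on a fixed unit grid can make your series converge.

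\textbf{The missing idea} is Sj\"ogren's discretization adapted to the level sets of $P$, which the paper uses.
\begin{itemize}
\item Assume $P\le1/2$ and set $E_m=\{P>2^{-m}\}$, so that $P^\beta\le C_\beta\sum_{m\ge1}2^{-\beta m}\dsone_{E_m}$.
\item The size condition puts $E_m$ in a box of side $2^{Cm}$, where the polynomial $1/P$ has gradient $\lesssim2^{C'm}$. Hence $E_m$ is covered by disjoint coordinate cubes $Q^m_j$, $1\le j\le J_m$, of side $\ell_m=2^{-cm}$, all lying in $E_{m+1}$.
\item Disjointness and Chebyshev give $J_m\ell_m^d\le|E_{m+1}|\le2^{\beta'(m+1)}\|P^{\beta'}\|_1$ for any $\beta_0<\beta'<\beta$.
\item Apply the transferred Lemma~\ref{lemma:displaced-exponential} with $b$ the centre coordinate of $Q^m_j$ and $\ell=\ell_m$. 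Since $|b|/\ell_m\le2^{(C+c)m}$, the loss is only $(1+m)^{1/p}$ per coordinate. Your Step~3 then yields $U^{m,j}$ with $\|U^{m,j}\|_p\lesssim(1+m)^{d/p}\|F\|_p$.
\item Set $U=C_\beta\sum_m2^{-\beta m}\ell_m^d\sum_jU^{m,j}$. It satisfies $\|U\|_p\lesssim\sum_m(1+m)^{d/p}2^{-(\beta-\beta')m}\|P^{\beta'}\|_1\|F\|_p<\infty$.
\end{itemize}
The shrinking cubes are what let the counting be done through $|E_{m+1}|$, which integrability controls. Meanwhile the cost in the displaced-interval estimate stays logarithmic in $|b|/\ell$, hence only polynomial in $m$.
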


\begin{proof}
We first construct a common positive majorant for
$(T_hF)_{h\in\bZ^d}$ when $F\geq0$.
The estimate for general $F$ will then follow from
Lemma~\ref{lemma:positive-block}.
For each $i$, define
$$
\alpha_t^{(i)}F(g)=F(g\exp(-tX_i)),
\qquad t\in\bR.
$$
Since $N$ is unimodular, these right translations
preserve Haar measure. They therefore form a
weak$^*$ continuous trace-preserving action of $\bR$
on $\cA$.
Applying Lemma~\ref{lemma:positive-transference}
to this action and using
Lemma~\ref{lemma:displaced-exponential}, we deduce that for every $b\in\bR$, $\ell>0$,
$1<p<\infty$ and $F\in L_p(\cA)_+$, there exists
$G\in L_p(\cA)_+$ such that
\begin{equation}\label{eq:transferred-interval-majorant}
    \frac1\ell
    \int_{b-\ell/2}^{b+\ell/2}
    F\bigl(g\exp(-e^{-k}sX_i)\bigr)\,ds
    \leq G(g)
    \qquad(k\in\bZ),
\end{equation}
and
\begin{equation}\label{eq:transferred-interval-norm}
    \|G\|_p
    \leq
    C_p\bigl(1+\log(2+|b|/\ell)\bigr)^{1/p}\|F\|_p.
\end{equation}

Without loss of generality, we may assume that
$P\leq1/2$.
Set
$$
E_m=\{n\in N:P(n)>2^{-m}\},
\qquad m\geq1.
$$
The scalar construction in
\cite[p.~325]{sjogren86admissible}
gives pairwise disjoint coordinate cubes
$Q_j^m$, $1\leq j\leq J_m$, of side length
$\ell_m=2^{-cm}$, such that
\begin{equation}\label{eq:nilpotent-level-cubes}
    \begin{gathered}
        E_m\subseteq\bigcup_{j=1}^{J_m}Q_j^m
        \subseteq E_{m+1},\\
        J_m\ell_m^d\leq|E_{m+1}|,
        \qquad
        |(c_{Q_j^m})_i|\leq2^{Cm},
    \end{gathered}
\end{equation}
where $c,C>0$ depend only on the fixed coordinates
and $P$.
Moreover,
\begin{equation}\label{eq:nilpotent-kernel-domination}
    P^\beta
    \leq C_\beta
    \sum_{m\geq1}2^{-\beta m}\dsone_{E_m}.
\end{equation}

For fixed $m,j$, define
$$
A_{i,k}^{m,j}F(g)
=\frac1{\ell_m}
 \int_{(c_{Q_j^m})_i-\ell_m/2}^{(c_{Q_j^m})_i+\ell_m/2}
 F\bigl(g\exp(-e^{-k}sX_i)\bigr)\,ds.
$$
Apply \eqref{eq:transferred-interval-majorant}
and \eqref{eq:transferred-interval-norm}
with $b=(c_{Q_j^m})_i$ and $\ell=\ell_m$.
Since \eqref{eq:nilpotent-level-cubes} gives
$
\frac{|(c_{Q_j^m})_i|}{\ell_m}
\leq2^{(C+c)m},
$
we have
$$
\begin{aligned}
C_p\left(
    1+\log\left(2+\frac{|(c_{Q_j^m})_i|}{\ell_m}\right)
\right)^{1/p}
\leq C_{p,N,P}(1+m)^{1/p}.
\end{aligned}
$$

By the coordinate formula for the inverse, we get
\begin{equation}\label{eq:nilpotent-cube-average}
    \frac1{\ell_m^d}
    \int_{Q_j^m}
    F\bigl(g(\delta_{-h}n)^{-1}\bigr)\,dn
    =
    A_{d,h_d}^{m,j}\cdots A_{1,h_1}^{m,j}F(g).
\end{equation}
For $F\geq0$, set $U_0^{m,j}=F$.
By \eqref{eq:transferred-interval-majorant},
\eqref{eq:transferred-interval-norm} and the preceding
bound, we may successively choose
$U_i^{m,j}\in L_p(\cA)_+$ such that
$$
A_{i,k}^{m,j}U_{i-1}^{m,j}\leq U_i^{m,j}
\qquad(k\in\bZ),
$$
and
$$
\|U_i^{m,j}\|_p
\leq C_{p,N,P}(1+m)^{1/p}\|U_{i-1}^{m,j}\|_p.
$$
By positivity and \eqref{eq:nilpotent-cube-average},
\begin{equation}\label{eq:nilpotent-cube-majorant}
    \frac1{\ell_m^d}
    \int_{Q_j^m}
    F\bigl(g(\delta_{-h}n)^{-1}\bigr)\,dn
    \leq U_d^{m,j}(g)
    \qquad(h\in\bZ^d).
\end{equation}
Moreover, iterating the norm estimate yields
\begin{equation}\label{eq:nilpotent-cube-norm}
    \|U_d^{m,j}\|_p
    \leq C_{p,N,P}(1+m)^{d/p}\|F\|_p.
\end{equation}
Choose $\beta_0<\beta'<\beta$.
Chebyshev's inequality gives
$
|E_{m+1}|
\leq2^{\beta'(m+1)}\|P^{\beta'}\|_1.
$
Define
$$
U=C_\beta
\sum_{m\geq1}2^{-\beta m}\ell_m^d
\sum_{j=1}^{J_m}U_d^{m,j}.
$$
By \eqref{eq:nilpotent-cube-norm},
\eqref{eq:nilpotent-level-cubes} and the preceding
measure estimate,
$$
\begin{aligned}
\|U\|_p
&\leq
C_{p,N,P,\beta}
\sum_{m\geq1}
2^{-\beta m}(1+m)^{d/p}
J_m\ell_m^d\,\|F\|_p\\
&\leq
C_{p,N,P,\beta,\beta'}\|P^{\beta'}\|_1
\sum_{m\geq1}
(1+m)^{d/p}2^{-(\beta-\beta')m}\|F\|_p
<\infty.
\end{aligned}
$$
Thus the positive series defining $U$ converges
in $L_p(\cA)$.

Finally, \eqref{eq:nilpotent-kernel-domination},
\eqref{eq:nilpotent-level-cubes} and
\eqref{eq:nilpotent-cube-majorant} yield
$$
\begin{aligned}
T_hF(g)
\leq
C_\beta\sum_{m\geq1}2^{-\beta m}
\sum_{j=1}^{J_m}
\int_{Q_j^m}
F\bigl(g(\delta_{-h}n)^{-1}\bigr)\,dn\leq
U(g).
\end{aligned}
$$
Hence $U$ is a common positive majorant for
$(T_hF)_{h\in\bZ^d}$ with the required norm bound.

For general $F$, the positive operator matrices
$$
\begin{pmatrix}
    T_h(|F^*|)&T_hF\\
    (T_hF)^*&T_h(|F|)
\end{pmatrix}\geq0
$$
and Lemma~\ref{lemma:positive-block} give the
maximal estimate from the positive case, as in
the proof of Corollary~\ref{thm:main-hormander}.
\end{proof}

We next apply Proposition~\ref{prop:nilpotent-poisson}
to Poisson integrals on symmetric spaces.
We use the boundary coordinates and Poisson formulas
from \cite[Sections~2--5]{sjogren86admissible}.

Let $X=G/K$ be a symmetric space of noncompact type,
where $G$ is connected semisimple with finite centre
and $K$ is maximal compact.
Fix an Iwasawa decomposition $G=KAN$.
Write $\mathfrak a=\operatorname{Lie}(A)$, let
$\Sigma^+$ and $\Pi$ denote the positive and simple
restricted roots, respectively, and put
$$
2\rho
=\sum_{\alpha\in\Sigma^+}
 (\dim\mathfrak g_\alpha)\alpha,
$$
where $\mathfrak g_\alpha$ is the restricted root space.
Let $H:G\to\mathfrak a$ be the Iwasawa projection,
set $M_0=Z_K(A)$ and $B=M_0AN$, and equip
$S=G/B=K/M_0$ with its $K$-invariant probability
measure $\sigma$.
Write $\pi_B:G\to S$ for the quotient map.

Fix $E\subsetneq\Pi$ and define
$$
\mathfrak a(E)=\bigcap_{\alpha\in E}\ker\alpha,
\qquad
A(E)=\exp\mathfrak a(E),
$$
and
$$
A(E)_{\bZ}
=
\{a\in A(E):
  \alpha(\log a)\in\bZ
  \text{ for every }\alpha\in\Pi\setminus E\}.
$$
Let $\overline N(E)$ be the subgroup with Lie algebra
$$
\overline{\mathfrak n}(E)
=
\bigoplus_{\substack{\alpha\in\Sigma^+\\
                     \alpha|_{\mathfrak a(E)}\ne0}}
\mathfrak g_{-\alpha}.
$$
Let $G^E$ be the connected semisimple part of
$Z_G(A(E))$, and put $K^E=G^E\cap K$,
with normalized Haar measure $dk$.
The compact group $K^E$ normalizes $\overline N(E)$
and centralizes $A(E)$.

Normalize Haar measure $dn$ on $\overline N(E)$
so that the boundary-coordinate formula in
\cite[Section~3, p.~318]{sjogren86admissible} reads
\begin{equation}\label{eq:boundary-coordinate-measure}
    \int_S f(b)\,d\sigma(b)
    =
    \int_{K^E}\int_{\overline N(E)}
    f(\pi_B(nk))
    e^{-\langle2\rho,H(nk)\rangle}\,dn\,dk.
\end{equation}
With this normalization,
$$
\int_{\overline N(E)}
e^{-\langle2\rho,H(nk)\rangle}\,dn=1,
\qquad k\in K^E.
$$
The formula also holds for positive operator-valued
functions.
On the open boundary chart, the $\overline N(E)$
coordinate is unique, while the compact coordinate
is taken modulo $M_0\cap K^E$.

Choose the canonical basis above from the negative
root spaces, with $X_i\in\mathfrak g_{-\alpha_i}$,
and normalize it so that $dn=dx_1\cdots dx_d$.
The list $(\alpha_i)_{i=1}^d$ includes root
multiplicities.
For $a\in A(E)_{\bZ}$, put
$$
h(a)
=(\alpha_1(\log a),\ldots,\alpha_d(\log a))
\in\bZ^d,
\qquad
J(a)=e^{\langle2\rho,\log a\rangle}.
$$
Define $\kappa_a(n)=a^{-1}na$.
Then
\begin{equation}\label{eq:poisson-conjugation}
    \kappa_a(n)=\delta_{h(a)}n,
    \qquad
    d(\kappa_a n)
    =e^{\sum_i\alpha_i(\log a)}\,dn
    =J(a)\,dn.
\end{equation}
The last equality follows because the roots in the
subsystem generated by $E$ vanish on $\mathfrak a(E)$.

The scalar Poisson kernel
$$
P(n)=e^{-\langle2\rho,H(n^{-1})\rangle},
\qquad n\in\overline N(E),
$$
satisfies conditions \textup{(i)--(iii)} of
Proposition~\ref{prop:nilpotent-poisson}
with $\beta_0=1/2$; see
\cite[Section~5, p.~324]{sjogren86admissible}.

Put
$$
\cA_E
=L_\infty(\overline N(E),dn)\overline\otimes\cM,
\qquad
\cB_S
=L_\infty(S,\sigma)\overline\otimes\cM,
$$
with their product traces.
The Poisson integral of a boundary function is
$$
\mathcal P f(g)
=\int_K f(\pi_B(gk))\,dk,
\qquad g\in G,
$$
where $dk$ here denotes normalized Haar measure on $K$.

\begin{corollary}\label{cor:poisson-boundary}
Let $1<p<\infty$.
\begin{itemize}
    \item[\textup{(i)}]
    For every $\beta>1/2$, the operators
    $$
    Q_a^{(\beta)}F(g)
    =J(a)\int_{\overline N(E)}
    F(n)P\bigl(\kappa_a(n^{-1}g)\bigr)^\beta\,dn,
    \qquad a\in A(E)_{\bZ},
    $$
    satisfy
    $$
    \|(Q_a^{(\beta)}F)_{a\in A(E)_{\bZ}}
      \|_{L_p(\cA_E;\ell_\infty)}
    \leq C_{p,E,\beta}\|F\|_{L_p(\cA_E)}.
    $$

    \item[\textup{(ii)}]
    Let $L,U\subset\overline N(E)$ and
    $V\subset G^E$ be compact, and let
    $f\in L_p(\cB_S)$ be supported in $\pi_B(LK^E)$.
    Set
    $$
    \Pi_{a,n_0,v}f(g)
    =\mathcal P f(ga n_0v),
    \qquad
    \mathcal I=A(E)\times U\times V.
    $$
    Then
    $$
    \|(\Pi_{a,n_0,v}f)_{(a,n_0,v)\in\mathcal I}
      \|_{L_p(\cA_E;\ell_\infty(\mathcal I))}
    \leq C_{p,E,L,U,V}\|f\|_{L_p(\cB_S)}.
    $$
\end{itemize}
\end{corollary}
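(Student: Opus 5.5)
Part (i) will follow from Proposition~\ref{prop:nilpotent-poisson} by a change of variables, and part (ii) from part (i) via the boundary coordinates.

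\emph{Part (i).} By unimodularity, the substitution $n\mapsto g n'^{-1}$ (so $n^{-1}g=n'$) gives
$$
Q_a^{(\beta)}F(g)=J(a)\int_{\overline N(E)}F(gn'^{-1})P(\kappa_a n')^\beta\,dn'.
$$
Next substitute $m=\kappa_a n'=\delta_{h(a)}n'$. By \eqref{eq:poisson-conjugation}, $dm=J(a)\,dn'$, so the factor $J(a)$ cancels and
$$
Q_a^{(\beta)}F(g)=\int F\bigl(g(\delta_{-h(a)}m)^{-1}\bigr)P(m)^\beta\,dm=T_{h(a)}F(g).
$$
Since $h(a)\in\bZ^d$, the family $(Q_a^{(\beta)})_a$ is a subfamily of $(T_h)_{h\in\bZ^d}$, possibly with repetitions. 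The maximal norm cannot increase under passing to subfamilies, and we have already recorded that $P$ satisfies the hypotheses with $\beta_0=1/2$. Proposition~\ref{prop:nilpotent-poisson} therefore gives (i).

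\emph{Part (ii).} We reduce to (i) with $\beta=1$. It suffices to treat $f\geq0$; the general case then follows from the $2\times2$ block argument of Lemma~\ref{lemma:positive-block}, as in Proposition~\ref{prop:nilpotent-poisson}.

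\textbf{Step 1: rewrite $\mathcal P f(g a n_0 v)$ in boundary coordinates.} Write $g=n\in\overline N(E)$, which is the variable of $\cA_E$. Using $K$-invariance together with \eqref{eq:boundary-coordinate-measure} for the translated measure, we express
$$
\mathcal P f(nan_0v)=\int_{K^E}\int_{\overline N(E)}f(\pi_B(n'k))\,\mathcal K(n,a,n_0,v;n',k)\,dn'\,dk.
$$
Following Sj\"ogren \cite[Section~5]{sjogren86admissible}, the kernel should be comparable to $J(a)P(\kappa_a(n'^{-1}n))$, with constants uniform for $n_0\in U$ and $v\in V$. The idea is that conjugating $n_0v$ through $a$ produces a bounded perturbation, because $A(E)$ centralizes $G^E$ and $a^{-1}$ contracts $\overline N(E)$ for the relevant $a$.

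\textbf{Step 2: fix the domination and handle continuous $a$.} For each $k\in K^E$ set $F_k(n')=f(\pi_B(n'k))$. By the last step we should obtain
$$
\mathcal P f(nan_0v)\le C\int_{K^E}Q^{(\beta)}_{a'}F_k(n)\,dk
$$
for some $\beta>1/2$, possibly $\beta<1$, which accounts for the perturbation through the power $P^\beta$. Here $a'\in A(E)_\bZ$ is a lattice point near $a$: rounding $a$ to the lattice changes $\kappa_a$ by a bounded dilation, which is absorbed by the constant. The support hypothesis on $f$ ensures that $F_k$ is compactly supported and that $\|F_k\|_p$ is controlled, uniformly in $k$, by $\|f\|_{L_p(\cB_S)}$, since the density in \eqref{eq:boundary-coordinate-measure} is bounded below on $L$.

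\textbf{Step 3: combine.} Part (i) gives, for each $k$, a majorant $U_k$ with $\|U_k\|_p\lesssim\|F_k\|_p$. Then $\int_{K^E}U_k\,dk$ is a common majorant for the whole family, and Minkowski's inequality bounds its $L_p$ norm by $C\|f\|_p$.

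\textbf{Main obstacle.} The hardest step will be the uniform kernel comparison in Step 1, where we must bound $P$ evaluated at perturbed arguments by a power $P^\beta$ uniformly over $U\times V$. I would take this directly from Sj\"ogren's scalar estimates rather than reprove it.
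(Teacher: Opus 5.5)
Your part (i) is correct and coincides with the paper's proof: the substitution $u=\kappa_a(n^{-1}g)$ together with \eqref{eq:poisson-conjugation} identifies $Q_a^{(\beta)}$ with $T_{h(a)}$.

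\textbf{Gap 1: the kernel comparison is false as stated.} Part (ii) has a genuine gap at the comparison you defer to Sj\"ogren. Take $a\in A(E)_{\bZ}$ and compute $\mathcal Pf(ga)$ exactly from \eqref{eq:boundary-coordinate-measure}: substitute $n=gama^{-1}$, and use $\pi_B(gamk)=\pi_B(nk)$ and the left $K$-invariance of $H$. With $F(n,k)=F_k(n)$, this gives
\[
\mathcal Pf(ga)=\int_{K^E}J(a)\int_{\overline N(E)}F(n,k)\,P\bigl(\kappa_k\kappa_a(n^{-1}g)\bigr)\,dn\,dk,\qquad \kappa_k(n)=k^{-1}nk .
\]
The weight therefore carries a $k$-dependent conjugation $\kappa_k$ that your $Q^{(\beta)}_{a'}F_k(n)$ lacks. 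This twist cannot be absorbed into a constant or a power of $P$. For $G=SL(3,\bR)$ and $E=\{\alpha_1\}$, in the coordinates $I+y_1E_{31}+y_2E_{32}$ of $\overline N(E)$ one has
\[
P=(1+y_1^2)^{-1}(1+|y|^2)^{-1},
\]
and $K^E=SO(2)$ acts by rotating $y$. If $\kappa_k$ maps $(R,0)$ to $(0,R)$, then $P(\kappa_k y)/P(y)^\beta=(1+R^2)^{2\beta-1}\to\infty$ for every $\beta>1/2$. Letting $F$ concentrate near an interior point of $L\times K^E$ shows that your Step~2 inequality already fails for $n_0=v=e$ and $a=a'\in A(E)_{\bZ}$.

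The repair is to remove $\kappa_k$ with the trace-preserving automorphism $\mathcal UF(g,k)=F(\kappa_k^{-1}g,k)$. This is legitimate because $\kappa_k$ preserves $dn$ and commutes with $\kappa_a$, since $K^E$ centralizes $A(E)$. Then apply (i) with $\beta=1$ once, on the coefficient algebra $L_\infty(K^E)\overline\otimes\cM$. This has two further benefits:
\begin{itemize}
\item It yields a single majorant $W(g,k)$, so $\int_{K^E}W(\cdot,k)\,dk$ is meaningful. Choosing $U_k$ separately for each $k$ leaves measurability in $k$ unaddressed.
\item It needs only the averaged bound $\|F\|_{L_p(dn\,dk)}\le c_L^{-1/p}\|f\|_p$. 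Your claim that $\|F_k\|_p$ is controlled uniformly in $k$ is false in general, because a single $F_k$ is the restriction of $f$ to a lower-dimensional slice.
\end{itemize}

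\textbf{Gap 2: the treatment of $n_0$, $v$ and non-lattice $a$.} Your argument here is only a heuristic, and the stated reason does not match the statement. The parameter $a$ ranges over all of $A(E)$, including directions in which $\mathrm{Ad}(a)$ expands $\overline{\mathfrak n}(E)$, so a contraction argument covers only a cone of parameters. In fact no kernel comparison and no exponent $\beta<1$ is needed. The missing idea is a Harnack-type bound for right translations. Write $an_0v=[a]z$, where $[a]\in A(E)_{\bZ}$ is obtained by taking integer parts of $\alpha(\log a)$ for $\alpha\in\Pi\setminus E$; then $z$ lies in a compact set $D$ depending only on $E,U,V$. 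Since
\[
\frac{d(z_*\sigma)}{d\sigma}(kM_0)=e^{-\langle2\rho,H(z^{-1}k)\rangle}\le C_D\qquad(z\in D),
\]
one gets $\mathcal Pf(g[a]z)\le C_D\,\mathcal Pf(g[a])$ for $f\ge0$. This reduces every $(a,n_0,v)\in\mathcal I$ to the exact lattice identity above.
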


\begin{proof}
For \textup{(i)}, put $u=\kappa_a(n^{-1}g)$.
Using Unimodularity and \eqref{eq:poisson-conjugation},
$$
n=g(\delta_{-h(a)}u)^{-1},
\qquad
dn=J(a)^{-1}\,du.
$$
Consequently,
\begin{equation}\label{eq:poisson-identification}
    \begin{aligned}
        Q_a^{(\beta)}F(g)
        =
        \int_{\overline N(E)}
        F\bigl(g(\delta_{-h(a)}u)^{-1}\bigr)
        P(u)^\beta\,du=T_{h(a)}F(g).
    \end{aligned}
\end{equation}
Proposition~\ref{prop:nilpotent-poisson}
therefore proves \textup{(i)}.

For \textup{(ii)}, assume that $L$ is nonempty
and first let $f\geq0$.
Define
$$
F(n,k)=f(\pi_B(nk)),
\qquad
(n,k)\in\overline N(E)\times K^E,
$$
and put
$$
\widetilde{\cA}_E
=
L_\infty(\overline N(E)\times K^E,dn\,dk)
\overline\otimes\cM.
$$
Since the weight
$e^{-\langle2\rho,H(nk)\rangle}$ is continuous and
strictly positive on the compact set $L\times K^E$,
we have
$
c_L=\min_{(n,k)\in L\times K^E}
e^{-\langle2\rho,H(nk)\rangle}>0.
$
Applying \eqref{eq:boundary-coordinate-measure}
to $\tau(|f|^p)$ and using that $F(n,k)=0$
for $n\notin L$, we obtain
$$
\begin{aligned}
\|f\|_{L_p(\cB_S)}^p
&=
\int_{K^E}\int_L
\tau(|F(n,k)|^p)
e^{-\langle2\rho,H(nk)\rangle}\,dn\,dk\\
&\geq
c_L\int_{K^E}\int_L\tau(|F(n,k)|^p)\,dn\,dk.
\end{aligned}
$$
Taking $p$th roots, we get
\begin{equation}\label{eq:boundary-pullback-norm}
    \|F\|_{L_p(\widetilde{\cA}_E)}
    \leq c_L^{-1/p}\|f\|_{L_p(\cB_S)}.
\end{equation}

For $k\in K^E$, let $\kappa_k(n)=k^{-1}nk$.
It follows from the compactness of $K^E$ that
$\kappa_k$ preserves Haar measure $dn$.
Since $K^E$ centralizes $A(E)$, $\kappa_k$ commutes
with $\kappa_a$.
Thus
$\mathcal U F(g,k)=F(\kappa_k^{-1}(g),k)$
defines a trace-preserving automorphism of
$\widetilde{\cA}_E$.

Apply \textup{(i)}, with $\beta=1$, to the coefficient
algebra $L_\infty(K^E)\overline\otimes\cM$.
We get
$$
\begin{aligned}
R_aF(g,k)
=\mathcal U^{-1}Q_a^{(1)}\mathcal U F(g,k)=J(a)\int_{\overline N(E)}
F(n,k)P\bigl(\kappa_k\kappa_a(n^{-1}g)\bigr)\,dn,
\end{aligned}
$$
where $Q_a^{(1)}$ acts in the $\overline N(E)$ variable.
There is therefore a common positive majorant
$W\in L_p(\widetilde{\cA}_E)_+$ such that
$$
R_aF\leq W
\quad(a\in A(E)_{\bZ}),
\qquad
\|W\|_p\leq C_{p,E}\|F\|_p.
$$
Integration in the compact variable defines a
positive contraction
$$
\mathcal E:L_p(\widetilde{\cA}_E)\to L_p(\cA_E),
\qquad
\mathcal E W(g)=\int_{K^E}W(g,k)\,dk.
$$

To express the Poisson integral in these coordinates,
apply \eqref{eq:boundary-coordinate-measure} to
$b\mapsto f(ga\cdot b)$ and substitute
$n=gam a^{-1}$.
Since $a$ centralizes $K^E$ and belongs to $B$,
$$
\pi_B(gamk)=\pi_B(nk),
\qquad
m=\kappa_a(g^{-1}n),
\qquad
dm=J(a)\,dn.
$$
By the left $K$-invariance of $H$, we have
$$
\begin{aligned}
e^{-\langle2\rho,H(mk)\rangle}
=e^{-\langle2\rho,H(k^{-1}mk)\rangle}=P\bigl(\kappa_k\kappa_a(n^{-1}g)\bigr).
\end{aligned}
$$
Hence
\begin{equation}\label{eq:poisson-compact-average}
    \mathcal P f(ga)
    =\mathcal E R_aF(g)
    \leq\mathcal E W(g),
    \qquad a\in A(E)_{\bZ}.
\end{equation}
This is the operator-valued form of the compact
integration in
\cite[(4.7), p.~323]{sjogren86admissible}.

For an arbitrary $a\in A(E)$, define
$[a]\in A(E)_{\bZ}$ by
$$
\alpha(\log[a])
=\lfloor\alpha(\log a)\rfloor,
\qquad \alpha\in\Pi\setminus E.
$$
Then $[a]^{-1}a$ belongs to a fixed compact subset
of $A(E)$.
Consequently,
$
z=[a]^{-1}an_0v
$
belongs to a compact set $D$ depending only on
$E,U,V$.

By the scalar Poisson formula
\cite[(2.1), p.~316]{sjogren86admissible},
the pushforward of $\sigma$ under $b\mapsto z\cdot b$
has density
$$
\frac{d(z_*\sigma)}{d\sigma}(kM_0)
=e^{-\langle2\rho,H(z^{-1}k)\rangle}.
$$
Thus
$
C_D
=\max_{z\in D,\,k\in K}
e^{-\langle2\rho,H(z^{-1}k)\rangle}<\infty.
$
For $f\geq0$, it follows that
$$
\begin{aligned}
\mathcal P f(g[a]z)
=\int_S f(g[a]\cdot b)\,d(z_*\sigma)(b)\leq C_D\int_S f(g[a]\cdot b)\,d\sigma(b)=C_D\mathcal P f(g[a]).
\end{aligned}
$$
Combining this with \eqref{eq:poisson-compact-average},
we obtain
$$
\Pi_{a,n_0,v}f
\leq C_D\mathcal E W
\qquad((a,n_0,v)\in\mathcal I).
$$
Moreover, by \eqref{eq:boundary-pullback-norm},
$$
\|C_D\mathcal E W\|_p
\leq
C_DC_{p,E}c_L^{-1/p}\|f\|_{L_p(\cB_S)}.
$$
This proves the positive case.
The estimate for general $f$ follows by applying
the same argument to $|f^*|$ and $|f|$ and using
Lemma~\ref{lemma:positive-block}.
\end{proof}

\section{Applications II: Fourier multipliers, Schur multipliers,
and double operator integrals}
\label{sec:applications}

The applications in this section follow from a maximal
inequality for trace-preserving actions.
We first establish this inequality and then apply it
to Fourier multipliers, Schur multipliers and double
operator integrals.
For group von Neumann algebras, we also obtain bilateral
almost uniform convergence of dyadic Fourier summation
methods.

Throughout this section, unless otherwise stated, let
$(K_\lambda)_{\lambda\in\Lambda}\subset L_1(\bR^d)$
satisfy
\begin{equation}\label{eq:action-kernel-mass}
    \sup_{\lambda\in\Lambda}\|K_\lambda\|_1
    \leq A_1<\infty
\end{equation}
and the joint H\"ormander condition
\eqref{eq:hormander} with constant $C_H$.
We use the Fourier transform convention
$$
\widehat K_\lambda(\xi)
=\int_{\bR^d}K_\lambda(y)
  e^{-2\pi i\langle y,\xi\rangle}\,dy.
$$

Let $\alpha=(\alpha_y)_{y\in\bR^d}$ be a family of
trace-preserving $*$-automorphisms of $\cM$ satisfying
$\alpha_0=\mathrm{id}_{\cM}$ and
$\alpha_{y+z}=\alpha_y\alpha_z$.
Assume that $y\mapsto\alpha_y(x)$ is weak$^*$ continuous
for every $x\in\cM$, and define
\begin{equation}\label{actionconvolution}
    T_\lambda^\alpha x
    =\int_{\bR^d}K_\lambda(y)\alpha_y(x)\,dy.
\end{equation}

\begin{corollary}\label{thm:trans}
The family $(T_\lambda^\alpha)_{\lambda\in\Lambda}$
is of weak type $(1,1)$ and of strong type $(p,p)$
for $1<p\leq\infty$.
The constants depend only on $d,A_1,C_H$ and,
in the strong estimate, $p$.
\end{corollary}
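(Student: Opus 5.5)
The plan is to combine the Euclidean result, Corollary~\ref{thm:main-hormander}, with the transference principle, Lemma~\ref{lemma:positive-transference}. That lemma is stated only for positive kernels and positive inputs, so the first step is a reduction to nonnegative kernels. As in the proof of Corollary~\ref{thm:main-hormander}, we pass to $|K_\lambda|$. The pointwise inequality $\bigl||K_\lambda(x-y)|-|K_\lambda(x)|\bigr|\leq|K_\lambda(x-y)-K_\lambda(x)|$ shows that $(|K_\lambda|)_\lambda$ satisfies \eqref{eq:hormander} with the same constant $C_H$. It also clearly has $\sup_\lambda\|\,|K_\lambda|\,\|_1\leq A_1$. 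Without loss of generality we take $\Lambda=\bN$; a countable subfamily suffices after a standard approximation, or one simply runs the argument on arbitrary countable subfamilies with uniform constants. Corollary~\ref{thm:main-hormander} then says that the positive family $f\mapsto|K_n|*f$ on $\cN$ is of weak type $(1,1)$ and of strong type $(p,p)$ for $1<p<\infty$. The constants depend only on $d,A_1,C_H,p$, since Theorem~\ref{thm:joint} is applied there with $r=\infty$ and $M_\infty\leq A_1$.

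Next we transfer. The operators $F\mapsto\int|K_n(y)|F(s+y)\,dy$ are convolutions with the reflected kernels $|K_n(-\cdot)|$. Reflection preserves both the $L_1$ bound and \eqref{eq:hormander}: substitute $x\mapsto-x$ and replace $y$ by $-y$. So the previous step applies to them as well. Lemma~\ref{lemma:positive-transference} then yields, for every $x\in L_p(\cM)_+$, a common majorant $U\in L_p(\cM)_+$ with $S_n x:=\int|K_n(y)|\alpha_y(x)\,dy\leq U$ and $\|U\|_p\leq C\|x\|_p$. It also yields the weak type $(1,1)$ estimate for $(S_n)$ on $L_1(\cM)_+$.

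Finally we return from $|K_n|$ to $K_n$. For general $x$, each $\alpha_y$ is a $*$-automorphism, so $\alpha_y(|x^*|)$, $\alpha_y(x)$, $\alpha_y(|x|)$ form a positive $2\times2$ block. For $|c|\leq1$, multiplying the off-diagonal entries by $c$ preserves positivity, as in \eqref{eq:error-diagonal-block}. Taking $c=K_n(y)/|K_n(y)|$ and integrating against $|K_n(y)|\,dy$ gives
$$
\begin{pmatrix} S_n|x^*| & T_n^\alpha x\\ (T_n^\alpha x)^* & S_n|x|\end{pmatrix}\geq0 .
$$
For the strong type bound, with $1<p<\infty$, let $U_1,U_2$ be common majorants of $S_n|x^*|$ and $S_n|x|$. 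Lemma~\ref{lemma:positive-block} then gives the factorization $T_n^\alpha x=(U_1)^{1/2}z_n(U_2)^{1/2}$ with $\|z_n\|_\infty\leq1$. This is exactly an $L_p(\cM;\ell_\infty)$ factorization with norm at most $\|U_1\|_p^{1/2}\|U_2\|_p^{1/2}\lesssim\|x\|_p$. The case $p=\infty$ is trivial, since $\|T_n^\alpha x\|_\infty\leq A_1\|x\|_\infty$.

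For weak type $(1,1)$ at height $\lambda$, choose projections $e_1,e_2$ from the weak estimate for $(S_n)$ applied to $|x^*|$ and $|x|$, and set $e=e_1\wedge e_2$. Compressing the block matrix by $e\oplus e$ and applying Lemma~\ref{lemma:positive-block} gives $\|eT_n^\alpha(x)e\|_\infty\leq\lambda$, with $\lambda\tau(1-e)\lesssim\|x\|_1$, because $\|\,|x^*|\,\|_1=\|\,|x|\,\|_1=\|x\|_1$.

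The main obstacle is the applicability of Lemma~\ref{lemma:positive-transference}. It requires the kernels to give finite positive measures, which holds here, and we must check that the orientation convention $F(s+y)$ matches the convolution in Corollary~\ref{thm:main-hormander}; that is handled by the reflection remark above. We also have to pass the weak estimate from a countable family to an arbitrary $\Lambda$. For the strong type this is handled by the Remark~\ref{rk:MaxFunct}/Lemma~\ref{lemma:positive-closure}-type compactness over finite subfamilies. For the weak type it is handled by Lemma~\ref{lemma:positive-closure} applied to the positive family $(S_\lambda|x|)$, which costs a factor $2$. For positive $x$ it is routine to identify the transferred operator with $S_n$ through the weak$^*$ continuity of the action.
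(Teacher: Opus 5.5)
Your proposal is correct and follows the paper's own route: you pass to $|K_\lambda|$, apply Corollary~\ref{thm:main-hormander} and Lemma~\ref{lemma:positive-transference}, and then return to complex kernels and arbitrary index sets. The details you add are all valid but are left implicit in the paper: the reflection $y\mapsto -y$ reconciling the convention $F(s+y)$, and the $2\times2$ block-matrix argument with Lemma~\ref{lemma:positive-block}, which you use in place of the real/imaginary-part domination from Corollary~\ref{thm:main-hormander}.
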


\begin{proof}
Apply Corollary~\ref{thm:main-hormander} and
Lemma~\ref{lemma:positive-transference} to the kernels
$|K_\lambda|$.
The passage to complex-valued kernels and arbitrary
index sets follows as in the proof of
Corollary~\ref{thm:main-hormander}.
\end{proof}

\subsection{Fourier multipliers}

We apply Corollary~\ref{thm:trans} to the natural
trace-preserving actions on group von Neumann algebras,
quantum Euclidean spaces and quantum tori.

\subsubsection{Group von Neumann algebras}
\label{subsec:group-multipliers}

Let $G$ be a countable discrete group, and let
$\mathrm{VN}(G)$ be its group von Neumann algebra,
generated by the left regular unitaries
$(u_g)_{g\in G}$.
Its canonical trace is given by
$\tau_G(u_g)=\dsone_{\{g=e\}}$.
We consider Fourier multipliers whose symbols have
the form $\widehat K_\lambda\circ b$, where
$b:G\to\bR^d$ is a group homomorphism.
For a Fourier polynomial $x=\sum_g x_gu_g$, put
$$
T_\lambda^b x
=\sum_g\widehat K_\lambda(b(g))x_gu_g.
$$

\begin{corollary}\label{cor:group-multipliers}
The family $(T_\lambda^b)_{\lambda\in\Lambda}$ extends
to a family of weak type $(1,1)$ and of strong type
$(p,p)$ for $1<p\leq\infty$ on the noncommutative
$L_p$-spaces associated with $\mathrm{VN}(G)$.
The constants are independent of $G$ and $b$.
\end{corollary}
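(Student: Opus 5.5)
The plan is to realize $T_\lambda^b$ as the action-convolution operator $T_\lambda^\alpha$ from \eqref{actionconvolution} for a suitable trace-preserving $\bR^d$-action on $\mathrm{VN}(G)$, and then invoke Corollary~\ref{thm:trans}. For $y\in\bR^d$ define
$$
\alpha_y(u_g)=e^{-2\pi i\langle y,b(g)\rangle}u_g .
$$
Since $b$ is a homomorphism, $g\mapsto e^{-2\pi i\langle y,b(g)\rangle}$ is a character of $G$. Hence $\alpha_y$ is implemented on $\ell_2(G)$ by the unitary multiplication operator $M_y\delta_h=e^{-2\pi i\langle y,b(h)\rangle}\delta_h$, via $\alpha_y(x)=M_yxM_y^*$. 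Indeed,
$$
M_y\lambda_g M_y^*\delta_h=e^{-2\pi i\langle y,b(gh)-b(h)\rangle}\delta_{gh}=e^{-2\pi i\langle y,b(g)\rangle}\lambda_g\delta_h .
$$
So $\alpha_y$ is a normal $*$-automorphism of $\mathrm{VN}(G)$. It satisfies $\alpha_0=\mathrm{id}$ and $\alpha_{y+z}=\alpha_y\alpha_z$, and it preserves $\tau_G$, because $\tau_G(x)=\langle x\delta_e,\delta_e\rangle$ and $M_y\delta_e=\delta_e$. Weak$^*$ continuity of $y\mapsto\alpha_y(x)$ follows from strong continuity of $y\mapsto M_y$ together with boundedness.

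Next I check that the two families agree. For a Fourier polynomial $x=\sum_g x_gu_g$, Fubini gives
$$
\int K_\lambda(y)\alpha_y(x)\,dy=\sum_g\Bigl(\int K_\lambda(y)e^{-2\pi i\langle y,b(g)\rangle}dy\Bigr)x_gu_g=\sum_g\widehat K_\lambda(b(g))x_gu_g=T_\lambda^bx .
$$
Thus $T_\lambda^b=T_\lambda^\alpha$ on the dense subspace of Fourier polynomials, and $T_\lambda^\alpha$ is contractive up to $A_1$ on every $L_p$.

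Corollary~\ref{thm:trans} then gives weak type $(1,1)$ and strong type $(p,p)$ bounds for $(T_\lambda^\alpha)$, with constants depending only on $d,A_1,C_H,p$; in particular they are independent of $G$ and $b$. The extension of each $T_\lambda^b$ to $L_p(\mathrm{VN}(G))$ is $T_\lambda^\alpha$ itself, so the statement follows.

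The only step requiring care is making sure the hypotheses behind Corollary~\ref{thm:trans} hold in this setting. That corollary relies on Lemma~\ref{lemma:positive-transference}, which needs $\mathcal M=\mathrm{VN}(G)$ to be semifinite (it is finite) and the action to be trace preserving and weak$^*$ continuous. All of these were verified above.
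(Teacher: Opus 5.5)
Your proposal is correct and follows the paper's route: you realize $T_\lambda^b$ as $T_\lambda^\alpha$ for the action $\alpha_y(u_g)=e^{-2\pi i\langle y,b(g)\rangle}u_g$, implemented by the diagonal unitaries on $\ell_2(G)$, and then apply Corollary~\ref{thm:trans}. Your checks of the implementation, trace invariance, weak$^*$ continuity and the Fubini identity on Fourier polynomials spell out what the paper states in a line.
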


\begin{proof}
Since $b$ is a group homomorphism,
$
\alpha_y(u_g)
=e^{-2\pi i\langle y,b(g)\rangle}u_g
$
defines a weak$^*$ continuous trace-preserving action
of $\bR^d$ on $\mathrm{VN}(G)$.
Indeed, $\alpha_y(x)=v_yxv_y^*$, where
$$
(v_y\xi)(g)
=e^{-2\pi i\langle y,b(g)\rangle}\xi(g),
\qquad \xi\in\ell_2(G).
$$
For a Fourier polynomial $x$, we have
$$
T_\lambda^b x
=\int_{\bR^d}K_\lambda(y)\alpha_y(x)\,dy.
$$
The conclusion follows from
Corollary~\ref{thm:trans}.
\end{proof}

We next apply this estimate to Fourier summation.
Let $K\in L_1(\bR^d)$ satisfy \eqref{eq:L1 Dini}
and \eqref{eq:size condition}, with $\int K=1$.
For $n\geq0$, define
$$
T_n^b x
=\sum_g\widehat K(2^{-n}b(g))x_gu_g
$$
initially on Fourier polynomials.

\begin{corollary}\label{cor:group-summation}
The family $(T_n^b)_{n\geq0}$ is of weak type $(1,1)$
and of strong type $(p,p)$ for $1<p\leq\infty$.
Moreover, for every $x\in L_1(\mathrm{VN}(G))$,
$$
T_n^b x\longrightarrow x
\quad\text{in }L_1(\mathrm{VN}(G))
\text{ and b.a.u.}
$$
\end{corollary}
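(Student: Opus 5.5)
The plan is to realize $T_n^b$ as the transferred family $T_\lambda^\alpha$ of Corollary~\ref{thm:trans}, with the dilated kernels $K_{2^{-n}}(y)=2^{nd}K(2^ny)$. This gives the maximal estimates. Convergence then follows from the noncommutative Banach principle, as in Corollary~\ref{cor:approximate-identity}.

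\emph{Representation.} Write $K_{2^{-n}}(y)=2^{nd}K(2^ny)$, so that $\widehat{K_{2^{-n}}}(\xi)=\widehat K(2^{-n}\xi)$. Hence $T_n^b$ is exactly the Fourier multiplier with symbol $\widehat{K_{2^{-n}}}\circ b$. By the computation in Corollary~\ref{cor:group-multipliers}, on Fourier polynomials we have
$$T_n^bx=\int_{\bR^d}K_{2^{-n}}(y)\alpha_y(x)\,dy,$$
where $\alpha_y(u_g)=e^{-2\pi i\langle y,b(g)\rangle}u_g$.

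\emph{Maximal estimates.} Since $\|K_{2^{-n}}\|_1=\|K\|_1$ for all $n$, condition \eqref{eq:action-kernel-mass} holds. By Proposition~\ref{prop:dini-hormander}, the full family $(K_{2^j})_{j\in\bZ}$ satisfies \eqref{eq:hormander}. The subfamily indexed by $j=-n\le 0$ then satisfies it as well, since restricting the index set only decreases the supremum inside the integral. Corollary~\ref{thm:trans}, equivalently Corollary~\ref{cor:group-multipliers} applied with $\Lambda=\{n\ge0\}$, therefore gives the weak type $(1,1)$ and strong type $(p,p)$ bounds for $1<p\le\infty$.

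\emph{Convergence on a dense class.} Fix a Fourier polynomial $x=\sum_{g\in F}x_gu_g$ with $F$ finite. Then
$$T_n^bx-x=\sum_{g\in F}\bigl(\widehat K(2^{-n}b(g))-1\bigr)x_gu_g.$$
Because $\widehat K$ is continuous with $\widehat K(0)=\int K=1$, each coefficient tends to $0$. The sum is finite, so $T_n^bx\to x$ in operator norm, hence also in $L_1$ and b.a.u. Fourier polynomials are dense in $L_1(\mathrm{VN}(G))$.

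\emph{$L_1$ convergence for general $x$.} Each $T_n^b$ is contractive up to the factor $\|K\|_1$ on $L_1$, since it is an average of trace-preserving automorphisms against $K_{2^{-n}}$. The uniform bound together with density gives $T_n^bx\to x$ in $L_1$ for all $x$.

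\emph{b.a.u.\ convergence for general $x$.} The weak type $(1,1)$ estimate makes $(T_n^b)$ bilaterally uniformly equicontinuous at zero on $L_1(\mathrm{VN}(G))$. By \cite[Theorem~3.4]{chilinlitvinov06equicontinuity}, the set of $x$ for which $(T_n^bx)$ converges b.a.u.\ is closed in $L_1$. It contains the dense Fourier polynomials, so it is all of $L_1$. The $L_1$ limit identifies the b.a.u.\ limit as $x$.

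\emph{Main obstacle.} The step needing the most care is the extension of $T_n^b$ from Fourier polynomials to $L_1(\mathrm{VN}(G))$, so that the transferred integral and the multiplier agree. Both are $L_1$-bounded and they agree on a dense set, so they coincide on $L_1$.
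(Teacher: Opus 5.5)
Your proposal is correct and follows essentially the same route as the paper: Proposition~\ref{prop:dini-hormander} with Corollary~\ref{cor:group-multipliers} for the maximal bounds, operator-norm convergence on Fourier polynomials, and the noncommutative Banach principle for b.a.u.\ convergence. The only difference is minor. You obtain $L_1$ convergence from the uniform bound $\|T_n^b\|_{L_1\to L_1}\leq\|K\|_1$ plus density, whereas the paper writes $T_n^bx=\int K(y)\alpha_{2^{-n}y}(x)\,dy$ and uses strong continuity of the action on $L_1$ with dominated convergence; both arguments work.
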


\begin{proof}
Proposition~\ref{prop:dini-hormander} and
Corollary~\ref{cor:group-multipliers} give the
maximal estimates.
Since $\widehat K$ is continuous and
$\widehat K(0)=1$, every Fourier polynomial satisfies
$$
\|T_n^b x-x\|_\infty
\leq
\sum_g|x_g|
\bigl|\widehat K(2^{-n}b(g))-1\bigr|
\longrightarrow0.
$$
The weak type estimate gives bilateral uniform
equicontinuity at zero.
By the noncommutative Banach principle
\cite[Theorem~3.4]{chilinlitvinov06equicontinuity}
and density of Fourier polynomials in
$L_1(\mathrm{VN}(G))$, the sequence $(T_n^b x)_n$
converges b.a.u. for every $x\in L_1(\mathrm{VN}(G))$.

On the other hand,
$$
T_n^b x
=\int_{\bR^d}K(y)\alpha_{2^{-n}y}(x)\,dy.
$$
By strong continuity on $L_1$ and dominated convergence theorem, we have
$$
\|T_n^b x-x\|_1
\leq
\int_{\bR^d}|K(y)|
\|\alpha_{2^{-n}y}(x)-x\|_1\,dy
\longrightarrow0.
$$
\end{proof}

\subsubsection{Quantum Euclidean spaces}
\label{sec:euclidean}

Let $\theta$ be a real antisymmetric $d\times d$
matrix. The quantum Euclidean space
$\mathcal R_\theta^d$ is the von Neumann algebra
generated on $L_2(\bR^d)$ by the unitaries
$$
(U(t)\xi)(r)
=e^{-\frac i2\langle t,\theta r\rangle}\xi(r-t),
\qquad t\in\bR^d.
$$
For $g\in L_1(\bR^d)$, write
$U(g)=\int_{\bR^d}g(t)U(t)\,dt$.
We use the canonical normal faithful semifinite trace,
normalized by
$$
\tau_{\mathcal R}(U(g))=(2\pi)^dg(0),
\qquad g\in\mathcal S(\bR^d).
$$
The operators $U(g)$, $g\in\mathcal S(\bR^d)$,
form the Schwartz class and are dense in
$L_p(\mathcal R_\theta^d)$ for $1\leq p<\infty$;
see~\cite{gonzalezjungeparcet19singular,
mcdonaldsukochevxiong20quantum}.

The canonical action
$$
\alpha_y(U(t))
=e^{-2\pi i\langle y,t\rangle}U(t)
$$
is weak$^*$ continuous and trace-preserving.
It is given by conjugation with the multiplication
operator $\xi(r)\mapsto e^{-2\pi i\langle y,r\rangle}\xi(r)$
on $L_2(\bR^d)$.
Define the Fourier multipliers on the Schwartz class by
$$
T_\lambda U(g)=U(\widehat K_\lambda g).
$$

\begin{corollary}\label{cor:euclid}
The family $(T_\lambda)_{\lambda\in\Lambda}$ extends
to a family of weak type $(1,1)$ and of strong type
$(p,p)$ for $1<p\leq\infty$ on
$L_p(\mathcal R_\theta^d)$.
\end{corollary}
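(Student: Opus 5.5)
The plan is to follow the group von Neumann algebra case (Corollary~\ref{cor:group-multipliers}): write $T_\lambda$ as an average of the canonical action and invoke Corollary~\ref{thm:trans}. The first step is to check, for $g\in\mathcal S(\bR^d)$, the identity
$$
T_\lambda U(g)=\int_{\bR^d}K_\lambda(y)\alpha_y(U(g))\,dy .
$$
Since $\alpha_y(U(g))=\int g(t)e^{-2\pi i\langle y,t\rangle}U(t)\,dt$, Fubini gives
$$
\int K_\lambda(y)\alpha_y(U(g))\,dy=\int g(t)\widehat K_\lambda(t)U(t)\,dt=U(\widehat K_\lambda g).
$$
Fubini applies because $K_\lambda\in L_1$, $g\in L_1$, and $\|U(t)\|\le1$, so the integrals converge in operator norm. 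Thus on the Schwartz class $T_\lambda$ coincides with the operator $T_\lambda^\alpha$ of \eqref{actionconvolution}.

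Next I verify the hypotheses on the action. The action is implemented by a unitary multiplication operator $v_y$, so $\alpha_y$ is a $*$-automorphism. The group law $\alpha_{y+z}=\alpha_y\alpha_z$ and $\alpha_0=\mathrm{id}$ are immediate. Weak$^*$ continuity follows because $y\mapsto v_y$ is strongly continuous. Trace preservation can be checked on the Schwartz class: $\alpha_y(U(g))=U(e_yg)$ with $e_y(t)=e^{-2\pi i\langle y,t\rangle}$ and $e_y(0)=1$, so $\tau_{\mathcal R}$ is unchanged. This extends to positive elements by normality and density, or one can use the standard fact that the dual action preserves the canonical trace, as in the cited references. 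Corollary~\ref{thm:trans} then gives weak type $(1,1)$ and strong type $(p,p)$ for $(T_\lambda^\alpha)$ on $L_p(\mathcal R_\theta^d)$, $1<p\le\infty$, with constants depending only on $d$, $A_1$, $C_H$ and $p$.

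Finally, since the Schwartz class is dense in $L_p(\mathcal R_\theta^d)$ for $p<\infty$ and $T_\lambda^\alpha$ is bounded by $\|K_\lambda\|_1$ on every $L_p$, the operator $T_\lambda^\alpha$ is the unique continuous extension of $T_\lambda$. For $p=\infty$, $T_\lambda^\alpha$ is the natural weak$^*$ extension. The maximal estimates therefore transfer to the extended family.

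The main obstacle I expect is checking trace preservation carefully on the positive cone, since $\tau_{\mathcal R}$ is only semifinite. I would handle this by noting that the trace is determined by its values on the dense Schwartz class together with normality, or by citing the known invariance of $\tau_{\mathcal R}$ under the dual action.
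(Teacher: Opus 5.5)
Your proposal is correct and takes the same approach as the paper: on the Schwartz class, identify $T_\lambda$ with $\int_{\bR^d}K_\lambda(y)\alpha_y(\cdot)\,dy$, then apply Corollary~\ref{thm:trans}; the paper simply asserts that the action is weak$^*$ continuous and trace-preserving, while you sketch these checks. One minor correction: $t\mapsto U(t)$ is only strongly continuous, so $U(g)$ is a strong integral rather than a norm-convergent Bochner integral, and your Fubini step should be carried out in the weak operator topology, where $\int\int|K_\lambda(y)||g(t)|\,dy\,dt<\infty$ justifies it.
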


\begin{proof}
For $g\in\mathcal S(\bR^d)$, we have
$$
\begin{aligned}
T_\lambda U(g)
=\int_{\bR^d}K_\lambda(y)
  U(e^{-2\pi i\langle y,\cdot\rangle}g)\,dy=\int_{\bR^d}K_\lambda(y)\alpha_y(U(g))\,dy.
\end{aligned}
$$
Then, the conclusion follows from
Corollary~\ref{thm:trans}.
\end{proof}

\subsubsection{Quantum tori}
\label{sec:torus}

Let $\theta$ be a real antisymmetric $d\times d$
matrix. Let $\mathcal A_\theta$ be the universal
$C^*$-algebra generated by unitaries $U_1,\ldots,U_d$
satisfying
$$
U_kU_j=e^{2\pi i\theta_{kj}}U_jU_k.
$$
Its canonical trace is determined by
$$
\tau_\theta(U^m)=\dsone_{\{m=0\}},
\qquad
U^m=U_1^{m_1}\cdots U_d^{m_d},
\qquad m\in\bZ^d.
$$
Write $\mathbb T_\theta^d$ for the von Neumann algebra
obtained in the GNS representation of this trace.
The canonical action
$$
\alpha_y(U^m)
=e^{-2\pi i\langle y,m\rangle}U^m,
\qquad y\in\bR^d,
$$
is weak$^*$ continuous and trace-preserving;
see~\cite{chenxuyin13harmonic}.
For a trigonometric polynomial
$x=\sum_m\widehat x(m)U^m$, define
$$
T_\lambda x
=\sum_m\widehat K_\lambda(m)\widehat x(m)U^m.
$$

\begin{corollary}\label{cor:tori}
The family $(T_\lambda)_{\lambda\in\Lambda}$ extends
to a family of weak type $(1,1)$ and of strong type
$(p,p)$ for $1<p\leq\infty$ on
$L_p(\mathbb T_\theta^d)$.

\end{corollary}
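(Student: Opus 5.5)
The plan is to follow the proofs of Corollaries~\ref{cor:group-multipliers} and~\ref{cor:euclid} verbatim: realise each $T_\lambda$ as the average $T_\lambda^\alpha$ of the canonical action against $K_\lambda$, and then invoke Corollary~\ref{thm:trans}.

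First, I check that the action $\alpha_y(U^m)=e^{-2\pi i\langle y,m\rangle}U^m$ satisfies the standing hypotheses of this section. It is a group of trace-preserving $*$-automorphisms of $\mathbb T_\theta^d$ with $\alpha_0=\mathrm{id}$ and $\alpha_{y+z}=\alpha_y\alpha_z$. It is weak$^*$ continuous, as recalled from \cite{chenxuyin13harmonic}. One can also see this directly: in the GNS space $L_2(\mathbb T_\theta^d)\cong\ell_2(\bZ^d)$, the map $\alpha_y$ is implemented by conjugation with the diagonal unitary $v_y\delta_m=e^{-2\pi i\langle y,m\rangle}\delta_m$. The trace is preserved because $\tau_\theta(\alpha_y(U^m))=e^{-2\pi i\langle y,m\rangle}\dsone_{\{m=0\}}=\tau_\theta(U^m)$.

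Second, for a trigonometric polynomial $x=\sum_m\widehat x(m)U^m$, the sum is finite, so I may integrate term by term:
$$
\int_{\bR^d}K_\lambda(y)\alpha_y(x)\,dy=\sum_m\widehat x(m)\Bigl(\int_{\bR^d}K_\lambda(y)e^{-2\pi i\langle y,m\rangle}\,dy\Bigr)U^m=\sum_m\widehat K_\lambda(m)\widehat x(m)U^m=T_\lambda x.
$$
Hence $T_\lambda=T_\lambda^\alpha$ on the dense subspace of trigonometric polynomials. Corollary~\ref{thm:trans} then gives the weak type $(1,1)$ estimate and the strong type $(p,p)$ estimates, $1<p\leq\infty$, for $(T_\lambda^\alpha)_\lambda$ on $L_p(\mathbb T_\theta^d)$, with constants depending only on $d,A_1,C_H$ and $p$.

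The only step needing care is the extension. The operators $T_\lambda^\alpha$ are defined by Bochner integrals on all of $L_p(\mathbb T_\theta^d)$ for $p<\infty$, and by weak$^*$ integrals for $p=\infty$. They are bounded by $A_1$ on each $L_p$, since each $\alpha_y$ is an isometry. For $p<\infty$, trigonometric polynomials are dense in $L_p(\mathbb T_\theta^d)$, so $T_\lambda^\alpha$ is the unique bounded extension of $T_\lambda$. I therefore take $T_\lambda^\alpha$ as the extension, and the maximal estimates transfer directly. For $p=\infty$, trigonometric polynomials are only weak$^*$ dense, so I define the extension as $T_\lambda^\alpha$ itself, which is weak$^*$ continuous; the strong $(\infty,\infty)$ bound $\sup_\lambda\|T_\lambda^\alpha x\|_\infty\leq A_1\|x\|_\infty$ is then immediate. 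I expect no real obstacle beyond this bookkeeping.
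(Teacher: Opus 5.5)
Your proposal is correct and follows the paper's proof: you write $T_\lambda x=\int_{\bR^d}K_\lambda(y)\alpha_y(x)\,dy$ on trigonometric polynomials and invoke Corollary~\ref{thm:trans}. Your extra checks are consistent with that argument and only add detail: the implementation of $\alpha_y$ by a diagonal unitary on $\ell_2(\bZ^d)$, trace preservation, and the density/weak$^*$ extension.
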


\begin{proof}
For trigonometric polynomials, we have
$$
\begin{aligned}
T_\lambda x
=\sum_m\widehat x(m)
  \left(
      \int_{\bR^d}K_\lambda(y)
      e^{-2\pi i\langle y,m\rangle}\,dy
  \right)U^m=\int_{\bR^d}K_\lambda(y)\alpha_y(x)\,dy.
\end{aligned}
$$
Then, the conclusion follows from
Corollary~\ref{thm:trans}.
\end{proof}

\subsection{Schur multipliers}

Let $G$ be a locally compact group with left Haar
measure $\mu$, and let $\beta:G\to\bR^d$ be continuous.
For $y\in\bR^d$, define a unitary on $L_2(G)$ by
$$
(u_y\xi)(g)
=e^{-2\pi i\langle y,\beta(g^{-1})\rangle}\xi(g).
$$
Then $\alpha_y(a)=u_yau_y^*$ is a weak$^*$ continuous,
trace-preserving action on $\mathcal B(L_2(G))$,
equipped with its canonical trace.

For a Hilbert--Schmidt operator $a$ with integral
kernel $a(g,h)$, define the Schur multiplier
$$
(S_\lambda a)(g,h)
=
\widehat K_\lambda
\bigl(\beta(g^{-1})-\beta(h^{-1})\bigr)a(g,h).
$$

\begin{corollary}\label{schue}
The family $(S_\lambda)_{\lambda\in\Lambda}$ extends
to a family of weak type $(1,1)$ on $S_1(L_2(G))$
and of strong type $(p,p)$ on $S_p(L_2(G))$
for $1<p\leq\infty$, where
$S_\infty(L_2(G))=\mathcal B(L_2(G))$.
\end{corollary}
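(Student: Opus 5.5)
The plan is to derive Corollary~\ref{schue} from Corollary~\ref{thm:trans}, as in the previous Fourier multiplier corollaries. First check the algebraic identity on a dense class. Take a Hilbert--Schmidt (or, better, trace class with nice kernel, e.g.\ finite rank with $L_1\cap L_2$ kernels) operator $a$ with kernel $a(g,h)$. Then $\alpha_y(a)=u_yau_y^*$ has kernel
$$
e^{-2\pi i\langle y,\beta(g^{-1})\rangle}a(g,h)e^{2\pi i\langle y,\beta(h^{-1})\rangle}
=e^{-2\pi i\langle y,\beta(g^{-1})-\beta(h^{-1})\rangle}a(g,h).
$$
Integrating against $K_\lambda(y)\,dy$ and using Fubini (justified since $K_\lambda\in L_1$ and $\|\alpha_y(a)\|_2=\|a\|_2$, so the Bochner integral converges in $S_2$ and kernels may be computed pointwise a.e.) gives
$$
\int_{\bR^d}K_\lambda(y)\alpha_y(a)\,dy=S_\lambda a,
$$
i.e.\ $S_\lambda=T_\lambda^\alpha$ in the notation \eqref{actionconvolution}.

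Next verify the hypotheses of Corollary~\ref{thm:trans}: $u_y$ is unitary (multiplication by a unimodular function), $u_{y+z}=u_yu_z$ and $u_0=1$, so $\alpha$ is a group action by $*$-automorphisms; each $\alpha_y$ is inner and hence preserves the canonical trace $\mathrm{Tr}$ on $\mathcal B(L_2(G))$. Weak$^*$ continuity of $y\mapsto\alpha_y(a)$ follows from strong continuity of $y\mapsto u_y$, which holds by dominated convergence because $\beta$ is continuous (measurable would suffice). Thus Corollary~\ref{thm:trans} with $\cM=\mathcal B(L_2(G))$, $\tau=\mathrm{Tr}$ yields weak type $(1,1)$ on $S_1$ and strong type $(p,p)$ on $S_p$, $1<p\leq\infty$, for $(T_\lambda^\alpha)$, with constants depending only on $d,A_1,C_H,p$.

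Finally identify extensions: $T_\lambda^\alpha$ is defined on all of $S_p$ (and on $\mathcal B(L_2(G))$ by weak$^*$ integration), bounded by $A_1$, and coincides with the Schur multiplier on $S_2$, which is dense in $S_p$ for $p<\infty$ and weak$^*$ dense for $p=\infty$; so it is the (unique continuous) extension of $S_\lambda$. The only point needing any care is this identification of the kernel of the Bochner integral with the pointwise product, which I expect to be routine via testing against rank-one operators $\xi\otimes\bar\eta$ with $\xi,\eta\in L_2(G)$.
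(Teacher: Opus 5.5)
Your proposal is correct and follows the paper's proof. The paper likewise computes the kernel of $u_yau_y^*$, concludes that $S_\lambda a=\int_{\bR^d}K_\lambda(y)u_yau_y^*\,dy$ on the Hilbert--Schmidt class, and invokes Corollary~\ref{thm:trans}; you additionally spell out the routine checks the paper leaves implicit (group law, trace preservation, weak$^*$ continuity, the Fubini justification, and identifying the extensions by density).
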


\begin{proof}
Since the integral kernel of $u_yau_y^*$ is
$
e^{-2\pi i\langle y,\beta(g^{-1})-\beta(h^{-1})\rangle}
a(g,h),
$
we have
$$
S_\lambda a
=\int_{\bR^d}K_\lambda(y)u_yau_y^*\,dy
$$
on the Hilbert--Schmidt class.
Then, the conclusion follows from
Corollary~\ref{thm:trans}.
\end{proof}

In particular, the result applies when $\beta$ is
a continuous orthogonal cocycle.
Such cocycles also occur in the study of Schur
multipliers in
\cite{cadilhaccondeparcet22spectral}.

\subsection{Double operator integrals}
\label{subsec:double-operator-integrals}

Double operator integrals are a standard tool in
operator perturbation theory and provide a spectral
formulation of Schur multipliers.
For background, see
\cite[Section~1.1]{peller16multiple}
and, in the semifinite setting,
\cite{depagterwitvlietsukochev02double}.
Here we obtain maximal inequalities for families
whose symbols depend on the difference of the
spectral variables.

In this subsection, take $d=1$.
Let $H$ be a selfadjoint operator affiliated with
$\cM$, with spectral measure $E_H$.
For $x\in L_2(\cM)$, define
$$
 T_\lambda^H x
=\int_{\bR}\int_{\bR}
 \widehat K_\lambda(s-t)\,dE_H(s)\,x\,dE_H(t).
$$
Therefore, one has
\begin{equation}\label{eq:doi-integrated-action}
     T_\lambda^H x
    =\int_{\bR}K_\lambda(y)
      e^{-2\pi iyH}x e^{2\pi iyH}\,dy.
\end{equation}
For every $1\leq p\leq\infty$, this formula defines
a bounded operator on $L_p(\cM)$ satisfying
$$
\| T_\lambda^H x\|_p
\leq\|K_\lambda\|_1\|x\|_p.
$$

\begin{corollary}\label{cor:doi-maximal}
The family
$( T_\lambda^H)_{\lambda\in\Lambda}$
is of weak type $(1,1)$ and of strong type $(p,p)$
for $1<p\leq\infty$.
\end{corollary}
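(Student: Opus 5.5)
The plan is to realize $(T_\lambda^H)_\lambda$ as the integrated family $T_\lambda^\alpha$ of Corollary~\ref{thm:trans}, for a suitable one-parameter trace-preserving action on $\cM$. Then transference gives both the weak type $(1,1)$ and the strong type $(p,p)$ estimates.

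\textbf{The action.} Define
$$
\alpha_y(x)=e^{-2\pi iyH}\,x\,e^{2\pi iyH},\qquad y\in\bR.
$$
Since $H$ is affiliated with $\cM$, the unitaries $e^{\pm2\pi iyH}$ belong to $\cM$, so each $\alpha_y$ is an inner $*$-automorphism of $\cM$. Inner automorphisms preserve the trace, because $\tau(uxu^*)=\tau(x)$ for $x\in\cM_+$. The group law $\alpha_{y+z}=\alpha_y\alpha_z$ and $\alpha_0=\mathrm{id}$ follow from the functional calculus. The map $y\mapsto e^{2\pi iyH}$ is strongly continuous, so $y\mapsto\alpha_y(x)$ is weak$^*$ continuous for every $x\in\cM$. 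The family therefore meets every requirement placed on $\alpha$ before Corollary~\ref{thm:trans}.

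\textbf{Identification with the double operator integral.} By \eqref{eq:doi-integrated-action}, $T_\lambda^H x=\int_\bR K_\lambda(y)\alpha_y(x)\,dy=T_\lambda^\alpha x$ for $x\in L_2(\cM)$. If this formula is not taken as given, I would check it on elementary tensors of spectral projections: for $x=E_H(I)\,x\,E_H(J)$ the symbol $\widehat K_\lambda(s-t)$ integrates to the same expression as $\int K_\lambda(y)e^{-2\pi iy(s-t)}\,dy$. Linearity, density, and the bound $\|T^H_\lambda x\|_p\le\|K_\lambda\|_1\|x\|_p$ then extend the identification to each $L_p(\cM)$, $1\le p\le\infty$, interpreting it by weak$^*$ continuity when $p=\infty$. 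On $L_1(\cM)$ and on the $L_p$ spaces the two families thus coincide on a dense set and are bounded, so they agree everywhere.

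\textbf{Conclusion and main obstacle.} With the identification in place, Corollary~\ref{thm:trans} applied with $d=1$ to $(K_\lambda)$ gives the weak type $(1,1)$ bound and the strong type $(p,p)$ bounds. The constants depend only on $A_1$, $C_H$ and $p$, and not on $H$. The only delicate point is that $H$ may be unbounded. This causes no difficulty, because only the bounded unitaries $e^{2\pi iyH}\in\cM$ ever enter, so the action is well defined regardless of whether $H$ is bounded.
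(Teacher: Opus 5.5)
Your proposal is correct and is essentially the paper's proof. You realize $T_\lambda^H$ as $T_\lambda^\alpha$ for the inner action $\alpha_y(x)=e^{-2\pi iyH}xe^{2\pi iyH}$ via \eqref{eq:doi-integrated-action}, then invoke Corollary~\ref{thm:trans} with $d=1$, and your extra checks (the unitaries $e^{2\pi iyH}$ lie in $\cM$ because $H$ is affiliated with $\cM$, inner automorphisms preserve the trace, and Stone's theorem gives weak$^*$ continuity) fill in what the paper states in one line.
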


\begin{proof}
The maps
$\alpha_y(x)=e^{-2\pi iyH}xe^{2\pi iyH}$
form a weak$^*$ continuous trace-preserving action
of $\bR$ on $\cM$.
Then, the conclusion follows from
\eqref{eq:doi-integrated-action} and
Corollary~\ref{thm:trans}.
\end{proof}

In particular, for a diagonal matrix
$H=\operatorname{diag}(h_1,\ldots,h_N)$,
$$
( T_\lambda^H x)_{ij}
=\widehat K_\lambda(h_i-h_j)x_{ij}.
$$
Thus the maximal estimates are uniform in $N$
and in the choice of $h_1,\ldots,h_N$.

\bigskip
\noindent\textbf{Acknowledgements.}
G. Hong was partially supported by the National Natural Science
Foundation of China (Grant Nos.~12325105, W2441002, and W2611005).
L. Wang was supported by a grant from the Research Grants Council of the
Hong Kong Administrative Region, China (Project No.~CityU 11308924).
X. Wang was supported by the National Natural Science Foundation of China
(Grant Nos.~12601242 and W2611005).
B. Xu was supported by the National Natural Science Foundation of China
(Grant No.~12501172) and the Fundamental Research Funds for the Central
Universities (Grant No.~20720250061).

\bigskip

\noindent\textbf{AI statement.}
The authors used GPT-6-Astra to assist with their
research on this topic. The model also assisted with language polishing
and editing. The authors checked and developed the suggested idea,
verified the proofs, and take full responsibility for the content of the paper.

\bibliographystyle{amsalpha}
\bibliography{main}

\end{document}